\newtheorem{thm}{Theorem}[section]
\newtheorem{prop}[thm]{Proposition}
\newtheorem{cor}[thm]{Corollary}
\newtheorem*{thmA}{Theorem A}
\newtheorem*{thmB}{Theorem B}
\newcommand{\spa}[1]{\langle #1 \rangle}
\newcommand{\Ker}{\operatorname{Ker}}
\newcommand{\rank}{\operatorname{rank}}
\begin{document}

\title{Intersection of subspaces in $A^2$\\for a three-dimensional division algebra $A$\\ over a finite field}

\author{Daisuke Tambara\\
Hirosaki University\\
e-mail: tambara@hirosaki-u.ac.jp}

\date{}

\maketitle

\begin{abstract}
Let $A$ be a three-dimensional nonassociative division algebra over a finite field $F$.
Let $A$ act on the space $A^2=A\oplus A$ by left multiplication. For a nonzero vector $v\in A^2$ we have a three-dimensional subspace $Av$ in $A^2$. This paper concerns about possible dimension of  intersections $Av\cap Av'$ for $v,v'\in A^2$.
One of our results is that there exists a two-dimensional intersection if and only if $A$ is isotopic to a commutative algebra. We use a classical theorem that $A$ is a twisted field of Albert.

\end{abstract}

\section*{Introduction}

Finite nonassociative division algebras have long been studied since Dickson's work \cite{Dic},
as seen from a survey by Cordero and Wene \cite{Cord}.
Albert, in his study of the relationship between finite division algebras and finite projective planes \cite{Alb4}, considered the left vector spaces $A^n$ over a division algebra $A$. He noticed that a basic property of ordinary vector spaces does not hold for nonassociative algebras 
\cite[Section 5]{Alb4}.
Compared with abundant works on projective planes over finite division algebras, little attention seems to have been attracted to vector spaces over them.
In this paper we shall make a closer look at the nature of subspaces of $A^n$ in the special case where $n=2$ and $A$ is three-dimensional over a finite field.

Let $A$ be a division algebra over a field $F$.
Let $A$ act on the space $A^2$ by left multiplication: $a(x,y)=(ax,ay)$.
For an element $v\in A^2$ we have a subspace $Av=\{av\mid a\in A\}$, which has the same dimension as $A$ unless $v=0$.
The paper is concerned with intersection of the subspaces $Av$ for $v\in A^2$.
When $A$ is a field, $A^2$ being an ordinary vector space over $A$, two different spaces $Av$ and $Av'$ intersect trivially. 
But when $A$ is nonassociative, nontrivial intersection may happen.
We are interested in what dimension $Av\cap Av'$ can take.
We answer the question in a special case below. 

Assume that  $F$ is a finite field and $A$ is a three-dimensional nonassociative division algebra over $F$. 
We first decide when $Av=Av'$ for $v,v'\in A^2$.
We call $v=(x,y)\in A^2$ a {\itshape nondegenerate vector} if $x, y$ are linearly independent over $F$.

\begin{thmA}
For any nondegenerate vectors $v,v'\in A^2$ we have $Av=Av'$ if and only if $Fv=Fv'$.

\end{thmA}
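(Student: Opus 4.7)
The \textbf{``if''} direction is immediate from $F$-bilinearity of the multiplication: if $v'=\lambda v$ with $\lambda\in F^\times$, then $b(\lambda v)=(\lambda b)v\in Av$ for every $b\in A$, so $Av'\subseteq Av$; equality follows by symmetry.

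For the \textbf{``only if''} direction, suppose $Av=Av'$. Then $v'\in Av'=Av$, so there is a unique $a\in A^\times$ with $v'=av$; the goal is to show $a\in F$. Write $v=(x,y)$, and let $R_u\colon A\to A$, $R_u(b)=bu$, the right-multiplication map (an $F$-linear bijection for $u\neq 0$). The containment $bv\in Av'$ for every $b$ amounts to the existence of $c\in A$ with $c(ax)=bx$ and $c(ay)=by$. The first equation uniquely determines $c=R_{ax}^{-1}R_x(b)$, and substituting into the second translates the hypothesis $Av=Av'$ into the single operator identity
\[
\psi\;:=\;R_{ax}^{-1}R_x\;=\;R_{ay}^{-1}R_y.
\]
Equivalently, $R_{az}\psi=R_z$ for $z\in\{x,y\}$, hence by $F$-bilinearity of $z\mapsto R_z$ and $z\mapsto R_{az}$, for every $z$ in the 2-dimensional subspace $U:=Fx+Fy$.

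The set $W:=\{z\in A:R_{az}\psi=R_z\}$ is an $F$-subspace of $A$ containing $U$, hence $\dim W\in\{2,3\}$. If $\dim W=3$, then $1\in W$ gives $R_a\psi=\mathrm{id}$, so $\psi=R_a^{-1}$, whence $R_{az}=R_zR_a$ for all $z$; evaluated on $b\in A$ this reads $b(az)=(ba)z$, so the associator $[b,a,z]:=(ba)z-b(az)$ vanishes identically, i.e.\ $a$ lies in the middle nucleus $N_m(A)$. By the Albert theorem cited in the introduction, $A$ is a twisted field, and a standard fact about such algebras is $N_m(A)=F$; so $a\in F$, as required.

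The \textbf{main obstacle} is to rule out $\dim W=2$ when $a\notin F$. In that scenario $W=U$, one does not get $\psi=R_a^{-1}$ for free (one may even have $1\notin U$), and setting $\delta:=R_a\psi-\mathrm{id}$ converts the $U$-identity into the assertion that a specific associator form is realized by a single right multiplication on $U$. The plan here is to pass to the explicit Albert model of $A$ (with $A=\mathbb{F}_{q^3}$ and multiplication twisted by a Frobenius automorphism), where $[b,a,z]$ acquires a concrete expression in terms of Galois conjugates; a direct calculation should then show that this coincidence on a 2-dimensional $U$ is impossible when $a\notin F$, forcing $\dim W=3$ and reducing us to the previous case.
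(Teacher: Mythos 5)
Your ``if'' direction is fine, and your reduction of $Av=Av'$ (with $v'=av$) to the operator identity $\psi:=R_{ax}^{-1}R_x=R_{ay}^{-1}R_y$ is sound; it is essentially Proposition 3.2 of the paper specialized to $(x',y')=(ax,ay)$. Your handling of the case $\dim W=3$ is also correct, and in fact needs no input from the theory of twisted fields: for a unital nonassociative division algebra of dimension $3$, the middle nucleus is a finite (hence commutative) sub-division-ring over which $A$ is a vector space, so its $F$-dimension divides $3$, and dimension $3$ would make $A$ associative; thus it equals $F$. The genuine gap is the case $\dim W=2$: you explicitly leave it as a ``plan'' (``a direct calculation should then show\ldots'') and carry out none of it, yet this case is the entire mathematical content of Theorem A --- everything before it is formal linear algebra. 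It is precisely the step on which the paper spends Sections 1--3: base change to the cubic splitting field, identification of $K\otimes A$ with a split Albert algebra, computation of the characteristic polynomial of $R_{y'}^{-1}R_y$ (Proposition 2.1), and a multi-case comparison of matrix entries (Proposition 3.3). That this is delicate, and not a routine verification, is visible in the paper's Theorem 3.1: after splitting, the statement is genuinely weaker, since $U(x,y)=U(x',y')$ also holds for the extra family $(y,y')=k(x,x')$, and these spurious solutions are eliminated only at the end of Section 4 using nondegeneracy and rationality over $F$. Any ``direct calculation'' in the Albert model must detect and dispose of the analogous phenomena, so the deferred work is a real case analysis, not a formality; as written, the proposal does not prove the hard direction.

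A secondary repair you would need: the paper's $A$ is a pre-semifield, with no identity element assumed, so ``$1\in W$'', ``$a\in F$'' and ``$N_m(A)=F$'' do not literally parse. This is fixable, because the statement of Theorem A is isotopy-invariant: an isotopy $(f,g,h)$ from $A$ to $A'$ satisfies $A'\,(g(x),g(y))=(h\times h)\bigl(A(x,y)\bigr)$, and $g$ preserves nondegeneracy and $F$-lines, so the theorem for $A'$ implies it for $A$; combined with Theorem 1.1 (and the fact that a twisted field can be modified into a unital algebra) one may assume $A$ is unital. But this needs to be said, and the same invariance is also what licenses your passage to ``the explicit Albert model'': your $A$ is only isotopic, not isomorphic, to a twisted field, so without this remark even the framework of your deferred calculation is not in place.
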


Our second result characterizes  algebras $A$ admitting a two-dimensional intersection $Av\cap Av'$.
Recall that two algebras $A$ and $A'$ are said to be {\itshape isotopic} if there exist linear isomorphisms $f,g,h\colon A\to A'$ such that $f(a)g(b)=h(ab)$ for all $a,b\in A$.

\begin{thmB}
There exist $v,v'\in A^2$ such that $\dim(Av\cap Av')=2$ if and only if $A$ is isotopic to a commutative algebra.

\end{thmB}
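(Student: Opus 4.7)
The plan is to translate both sides of the equivalence into linear-algebraic conditions on $A$ and then establish them via the classical twisted-field representation of $A$ mentioned in the abstract.

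First, I unpack the dimension. For $v=(x,y), v'=(x',y')\in A^2$ with all of $x,y,x',y'\in A^*$ (the cases in which some component vanishes are dispatched directly and yield intersection dimension $0$ or $3$, never $2$), an element of $Av\cap Av'$ is a pair $(ax,ay)=(bx',by')$ with $b$ forced, via the bijectivity of right multiplications in the division algebra, to satisfy $b=R_{x'}^{-1}(ax)=R_{y'}^{-1}(ay)$. Hence
\[
\dim(Av\cap Av')=\dim\ker\bigl(R_{x'}^{-1}R_x-R_{y'}^{-1}R_y\bigr),
\]
so the left-hand side of the theorem is: there exist $x,y,x',y'\in A^*$ making this $F$-linear endomorphism of $A$ of rank $1$. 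A standard manipulation of the isotopy definition gives the parallel reformulation that $A$ is isotopic to a commutative algebra iff there is a linear bijection $\mu\colon A\to A$ with $p\,\mu(q)=q\,\mu(p)$ for all $p,q\in A$, equivalently $R_{\mu(q)}=L_q\circ\mu$ in $\mathrm{End}(A)$ for each $q$.

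For the $(\Leftarrow)$ direction, I would use the above $\mu$: pick $b_0,b_1\in A^*$ linearly independent and set $x'=\mu(b_0)$, $y'=\mu(b_1)$. Then $R_{x'}=L_{b_0}\mu$, $R_{y'}=L_{b_1}\mu$, and
\[
R_{x'}^{-1}R_x-R_{y'}^{-1}R_y=\mu^{-1}\bigl(L_{b_0}^{-1}R_x-L_{b_1}^{-1}R_y\bigr),
\]
so the task reduces to choosing $x,y\in A^*$ making $L_{b_0}^{-1}R_x-L_{b_1}^{-1}R_y$ of rank $1$. Letting $T=L_{b_1}L_{b_0}^{-1}\in\mathrm{GL}(A)$, this is the same as rank$(TR_x-R_y)=1$. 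The associated equation $TR_x(a)=R_y(a)$ defines, for each choice of $x$, a subspace of $A$; varying $x,y$ (a six-dimensional parameter space) and using the nonassociative flexibility in $A$, I would locate $x,y$ for which this subspace has dimension exactly $2$, producing the desired $v=(x,y)$ and $v'=(\mu(b_0),\mu(b_1))$.

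For the $(\Rightarrow)$ direction, suppose rank $1$, and write $R_{x'}^{-1}R_x-R_{y'}^{-1}R_y=c\otimes\psi$ for $c\in A\setminus\{0\}$, $\psi\in A^*\setminus\{0\}$. On $W:=\ker\psi$ (2-dim), $\phi:=R_{x'}^{-1}R_x$ agrees with $R_{y'}^{-1}R_y$, giving a linear bijection $\phi\colon W\to\phi(W)$ with $\phi(a)x'=ax$ and $\phi(a)y'=ay$ for $a\in W$. Now I invoke that $A$ is a twisted field of Albert: $A=K$ with $a\circ b=ab-j\,a^\sigma b^\tau$, so $R_b=M_b-j\,M_{b^\tau}\sigma_*$ as $F$-linear operators on $K$. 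Expanding the rank-$1$ condition in this representation produces a polynomial identity over $K$ in the Galois-theoretic data $(\sigma,\tau,j)$ and the four vectors. A Galois-theoretic analysis, using that $\sigma,\tau$ are powers of the Frobenius of $K/F$, forces the data into a shape compatible with a commutative isotope, from which the bijection $\mu$ with $p\,\mu(q)=q\,\mu(p)$ is extracted.

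The main obstacle is this $(\Rightarrow)$ direction: the rank-$1$ condition is local (depending on a single pair $v,v'$) whereas the isotopy is a global structure. The twisted-field parameterization is exactly what closes this gap, by reducing the extrapolation to a finite combinatorial problem on the Galois data $(\sigma,\tau)$ and the non-norm $j$. It also explains why the commutative case is singled out: rank $1$ forces a symmetric compatibility on the exponents in $a\circ b=ab-j\,a^\sigma b^\tau$, and this compatibility is precisely what permits a commutative isotope.
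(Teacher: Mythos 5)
Your two opening translations are correct, and they match the paper's own framework: for nonzero components, $\dim(Av\cap Av')=\dim\Ker\bigl(R_{x'}^{-1}R_x-R_{y'}^{-1}R_y\bigr)$ (this is Proposition 6.1(i) combined with the operator $R_{x'}^{-1}R_x$ that the paper studies in Sections 2--3), and the existence of a bijection $\mu$ with $p\,\mu(q)=q\,\mu(p)$ is a correct restatement of isotopy to a commutative algebra. But in both directions the step you delegate to "analysis" is precisely the content of the theorem. In the $(\Leftarrow)$ direction, the existence of $x,y$ with $\rank(TR_x-R_y)=1$ is asserted by parameter counting ("six-dimensional parameter space", "nonassociative flexibility"). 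Such a count proves nothing here: applied verbatim to the original operator $R_{x'}^{-1}R_x-R_{y'}^{-1}R_y$ (twelve parameters, against the rank-$\le 1$ locus of codimension four in $M_3(F)$), it would "predict" two-dimensional intersections for \emph{every} three-dimensional division algebra, which the theorem itself says is false whenever $A$ is not isotopic to a commutative algebra. Existence needs a construction that actually uses $\mu$. The paper's is short: the property is isotopy-invariant, so assume $A$ commutative; take $x,y$ independent, $x'\notin Fx$, and $y'$ with $xy'=x'y$; then $x'v=xv'$ and $y'v=yv'$, so $\spa{x',y'}v\subseteq Av\cap Av'$, and Theorem A gives $Av\ne Av'$, hence the dimension is exactly $2$. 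Your sketch never produces the analogous witnesses.

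The $(\Rightarrow)$ direction, where the real difficulty lies, is a black box in your proposal: "expanding the rank-$1$ condition \dots produces a polynomial identity \dots a Galois-theoretic analysis \dots forces the data into a shape compatible with a commutative isotope" names no identity, no invariant, and no mechanism --- it assumes exactly what must be proved. Concretely, the paper has to (a) rule out degenerate configurations: a two-dimensional intersection forces $\spa{x,y},\spa{x',y'},\spa{x,x'},\spa{y,y'}$ to be two-dimensional with $\spa{x,y}\ne\spa{x',y'}$, and this nondegeneracy must be shown stable under arbitrary $GL_2$ transformations over the algebraic closure (Propositions 6.7--6.10, which are themselves nontrivial and use the characteristic polynomial of $R_{x'}^{-1}R_x$ together with $[K:F]=3$); (b) pass to the algebraic closure, where the twisted field splits into $\phi_{d_0,d_1,d_2}$; (c) normalize the pair of $2\times 2$ coefficient matrices $(G_0,G_1)$ by the Kronecker normal form and verify, case by case, that a two-dimensional solution space forces $d_0d_1d_2=1$, i.e.\ $N(c)=-1$ (Theorem 7.2); and (d) invoke the isotopy classification of twisted fields: $N(c)=-1$ makes $(K,\mu)$ isotopic to the $c=-1$ twisted field, which is commutative. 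The quantitative target $N(c)=-1$ is the entire bridge from your local hypothesis (one rank-one operator) to the global conclusion (a commutative isotope), and it never appears in your sketch; without it there is nothing from which to "extract" $\mu$. Two smaller points: Theorem 1.1 gives only that $A$ is \emph{isotopic} to a twisted field, not equal to one, so you also need the (easy) isotopy-invariance of your rank-one condition; and your dismissal of degenerate $v,v'$ covers only vanishing components, whereas linearly dependent nonzero components also require the (easy) arguments of Propositions 6.3--6.5.
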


In proving these we use a theorem that any three-dimensional nonassociative division algebras over a finite field is an Albert twisted field, due to Kaplansky and Menichetti (\cite{Kap1}, \cite{Kap2}, \cite{Men}). Also we follow Kaplansky's \lq algebraically closed style\rq, in which the base field is extended to its algebraic closure.

Based on these results we compute the number of  subspaces $Av'$ complementary to a given subspace $Av$ (Propositions 9.12 and 9.16).

The paper is organized as follows.
In Section 1 we review basic facts about Albert's twisted fields. 
In Section 2 we consider an algebra obtained from a twisted field by base extension to an algebraic closure, which we here call a {\itshape split Albert algebra}.
In Section 3 we prove a split version of 
Theorem A, from which we deduce the theorem in Section 4.
In Section 5 we prove the \lq if\rq\ part of Theorem B.
In Section 6 we  prepare  some propositions on intersections $Av\cap Av'$. 
We analyse the situation in which two-dimensional intersections occur
for a split Albert algebra in Section 7, thereby
deduce the \lq only if\rq\ part of Theorem B in Section 8.
 Finally we compute the number of complementary subspaces in Section 9. 
 
By a division algebra we mean a vector space $A$ over a field equipped with a bilinear map $A\times A\to A\colon (x,y)\mapsto xy$ such that for every nonzero $a\in A$ the left multiplication $L_a\colon x\mapsto ax$ and the right multiplication $R_a\colon x\mapsto xa$  are bijections. We do not put the axiom of an identity element, so this is a pre-semifield in Knuth's terminology (\cite{Knu}). Sometimes we refer to $A$ as 
$(A,m)$, denoting the bilinear map $A\times A\to A$ by $m$.

\section{Twisted fields}

We review here the construction of a twisted field associated with a cubic extension, and then
describe its multiplication after  the base extension  to a splitting field.

Let $K/F$ be a cyclic cubic extension and $\sigma$  a generator of its Galois group.
Let $N$ denote the norm map $K\to F$ for the extension.
Let $c\in K$ be an element not equal to $x^{\sigma}x^{-1}$ for any $x\in K^{\times}$. 
This amounts to requiring that $N(c)\ne 1$.

Define a map $\mu\colon K\times K\to K$ by
$\mu(x,y)=x y^{\sigma}-c x^{\sigma}y$.
The pair $(K, \mu)$ is a division algebra over $F$.
The map $\mu$ can be modified  by linear automorphisms  into a multiplication admitting an identity element. The resulting unital division algebra is called a twisted field (\cite{Alb2}, \cite{Alb3}).
As an identity element is irrelevant to our work, we call less strictly the algebra $(K,\mu)$ the twisted field associated with the triple $(K/F, \sigma, c)$.

If $c=-1$, $(K,\mu)$ is commutative.

Two algebras $A$ and $A'$ are said to be {\itshape isotopic} if there exist linear isomorphisms $f,g,h\colon A\to A'$ such that $f(a)g(b)=h(ab)$ for all $a,b\in A$ (\cite{Alb1}).

Let $c'\in K$ with $N(c')\ne 1$ and  let $(K, \mu')$ be the corresponding twisted field. 
If $N(c)=N(c')$, then $(K,\mu)$ and $(K, \mu')$ are isotopic.
Indeed, write $c'/c=a^{\sigma}a^{-1}$ with $a\in K^{\times}$;
Then $a\mu'(x,y)=\mu(ax,y)$ for all $x,y\in K$.

In particular, if $N(c)=-1$, then $(K,\mu)$ is isotopic to a commutative algebra.

It is known that any three-dimensional nonassociative unital division algebra over a finite field is isomorphic to a twisted field in the strict sense. This theorem was conjectured and partly proved  by Kaplansky, and completed by Menichetti
(\cite{Kap2}, \cite{Men}).
As we are ignoring  unitality, we should phrase this theorem in terms of isotopy rather than isomorphism:

\begin{thm}
Let $F$ be a finite field and $A$ a three-dimensional nonassociative division algebra over $F$.
Then $A$ is isotopic to a twisted field $(K, \mu)$ for a cubic extension $K/F$ and an element $c\in K$.
\end{thm}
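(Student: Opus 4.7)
The plan is to reduce to the Kaplansky--Menichetti theorem, which classifies \emph{unital} three-dimensional nonassociative division algebras over a finite field up to isomorphism. Since the theorem as stated asks only for isotopy and imposes no unitality, the only extra ingredient needed is the standard fact that any pre-semifield is isotopic to a semifield of the same dimension; combined with the remark already recorded in this section, that a strict twisted field is isotopic to the looser $(K,\mu)$, this will close the argument.

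First I would manufacture a unital isotope of $A$. Fix any nonzero $e \in A$ and consider the linear bijections $f, g \colon A \to A$ given by $f(a) = ae$ and $g(b) = eb$; these are bijections because $A$ is a division algebra. Define a new multiplication $\circ$ on $A$ by requiring $f(a)\circ g(b) = ab$, that is, $x \circ y = f^{-1}(x)\cdot g^{-1}(y)$. The triple $(f,g,\mathrm{id})$ is then an isotopy from $(A,\cdot)$ to $(A,\circ)$, and setting $a=e$ or $b=e$ shows that $u := e\cdot e$ is a two-sided identity for $\circ$. Hence $(A,\circ)$ is a three-dimensional nonassociative unital division algebra over $F$.

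Next I would invoke the Kaplansky--Menichetti theorem to obtain an isomorphism of $(A,\circ)$ onto a strict (unital) twisted field, which by its construction is isotopic to some $(K,\mu)$ with $\mu(x,y) = xy^{\sigma} - c\, x^{\sigma}y$. Composing this with the isomorphism (an isotopy) and the initial isotopy from $A$ to $(A,\circ)$ yields the desired isotopy from $A$ to $(K,\mu)$. The substantive content is entirely concentrated in the Kaplansky--Menichetti classification, which is used as a black box; the only original obstacle in our proof is thus the routine passage from pre-semifields to semifields carried out above.
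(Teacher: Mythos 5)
Your route is the same as the paper's: the paper offers no argument for Theorem 1.1 beyond citing the Kaplansky--Menichetti classification of unital algebras and remarking that, unitality being dropped, the statement is to be read up to isotopy. Your explicit unitalization (the isotope $x\circ y=f^{-1}(x)\cdot g^{-1}(y)$ with $f=R_e$, $g=L_e$, which has two-sided identity $e\cdot e$) is the standard and correct way to make that reduction precise, and the concluding composition of isotopies is fine.

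However, the assertion ``hence $(A,\circ)$ is a three-dimensional \emph{nonassociative} unital division algebra'' is a genuine gap, and it sits exactly where the substance of the reduction lies. Nonassociativity is not an isotopy invariant of pre-semifields, so it does not transfer from $A$ to $(A,\circ)$. Concretely, take $A=\mathbb{F}_{q^3}$ with product $x*y=x^{q}y$: this is a three-dimensional division algebra over $F=\mathbb{F}_q$ in the paper's sense, and it is not associative, since $(x*y)*z=x^{q^2}y^{q}z\neq x^{q}y^{q}z=x*(y*z)$; yet taking $e=1$ in your construction gives $x\circ y=xy$, so the unital isotope is the field $\mathbb{F}_{q^3}$ and Kaplansky--Menichetti returns a field, not a twisted field. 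Worse, the failure cannot be patched for this $A$: here $Av$ is the $\mathbb{F}_{q^3}$-line through $v$, so $Av=Av'$ for $v'=av$ with any $a\in\mathbb{F}_{q^3}^{\times}$, hence the conclusion of Theorem 4.1 fails for $A$; but that conclusion holds for every twisted field $(K,\mu)$ (Sections 3--4, independently of Theorem 1.1) and is preserved under isotopy, so this $A$ is isotopic to no $(K,\mu)$ at all. (Over $\mathbb{F}_2$ the clash is starker still: $N(c)=c^{7}=1$ for all $c$, so no twisted field exists, while this $A$ does.) Thus Theorem 1.1 --- and any proof of it --- only makes sense if ``nonassociative'' is read as ``not isotopic to an associative algebra'', which is what the paper tacitly assumes. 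Under that reading your gap closes in one line: if $(A,\circ)$ were associative, it would be a finite associative division algebra, hence a field by Wedderburn's theorem, so $A$ would be isotopic to a field, contradicting the (corrected) hypothesis; therefore $(A,\circ)$ is a proper semifield and Kaplansky--Menichetti applies as you intended. You should add that line, and note the reinterpretation of the hypothesis it requires.
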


Following Kaplansky's \lq algebraically closed style\rq (\cite{Kap1}), we discuss splitting of twisted fields.

Let $(K, \mu)$ be the twisted field associated with $(K/F,\sigma,c)$.
Since $K/F$ is a Galois extension,
we have an algebra isomorphism
$\omega\colon K\otimes K\to K^3$
taking 
$x\otimes y$ to $(xy,xy^{\sigma}, xy^{\sigma^2})$,
where the algebra structures of $K\otimes K$ and $K^3$ are the standard ones.
This transforms the automorphism $1\otimes \sigma$ of $K\otimes K$ into 
an automorphism $\rho$ of $K^3$ given by
$\rho\colon (x_i)\mapsto (x_{i+1})$,
where the index is taken modulo 3.
Put $c_i=c^{\sigma^i}$ and $\gamma=(c_i)_i\in K^3$.
Then $\omega$ transforms the multiplication $1\otimes \mu$ on $K\otimes K$ into a multiplication $\nu$ on $K^3$ given by
$\nu(\xi, \eta)=\xi \eta^{\rho}-\gamma \xi^{\rho}\eta$,
or in coordinates 
$\nu((x_i), (y_i))=(x_iy_{i+1}-c_ix_{i+1}y_i)$.
Let $(e_0,e_1,e_2)$ be the standard basis of $K^3$.
Then
$$
\nu (e_i, e_j)=
\begin{cases} e_i \quad &\text{if $j=i+1$,}\\
-c_je_j &\text{if $i=j+1$,}\\
0 &\text{otherwise.}
\end{cases}
$$

In summary

\begin{prop} 
After the scalar extension $K/F$, the twisted field $(K, \mu)$ becomes isomorphic to the $K$-algebra $(K^3, \nu)$ with multiplication $\nu$ given by the above formula.
\end{prop}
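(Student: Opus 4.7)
The plan is to verify the four successive claims of the statement: that $\omega$ is a $K$-algebra isomorphism, that $1\otimes\sigma$ corresponds under $\omega$ to the shift $\rho$, that the transported multiplication $1\otimes\mu$ becomes $\nu(\xi,\eta)=\xi\eta^{\rho}-\gamma\xi^{\rho}\eta$, and that the multiplication table for $\nu(e_i,e_j)$ then takes the displayed form.

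First I would dispatch $\omega$ by the standard argument that a Galois extension splits itself: $\omega$ is $K$-linear (with $K$ acting on the first tensor factor on the source and diagonally on the target), multiplicative on elementary tensors since
\[
\omega\bigl((x\otimes y)(x'\otimes y')\bigr)=(xx'yy',\,xx'(yy')^{\sigma},\,xx'(yy')^{\sigma^{2}})
\]
factors coordinatewise as $\omega(x\otimes y)\cdot\omega(x'\otimes y')$, and bijective because the determinant of the $K$-linear map (a Vandermonde-type expression in the three Galois conjugates) is nonzero by Dedekind's independence of characters.

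For the Galois action, a direct substitution gives
\[
\omega((1\otimes\sigma)(x\otimes y))=\omega(x\otimes y^{\sigma})=(xy^{\sigma},xy^{\sigma^{2}},xy^{\sigma^{3}})=(xy^{\sigma},xy^{\sigma^{2}},xy),
\]
since $\sigma^{3}=1$, and this equals $\rho(\omega(x\otimes y))$. To transport the multiplication it suffices, by $K$-bilinearity in the first tensor factor on each side, to compute on pairs of the form $(1\otimes x,1\otimes y)$. I would verify the two summands of $\mu$ separately:
\[
\omega(1\otimes xy^{\sigma})=(xy^{\sigma},x^{\sigma}y^{\sigma^{2}},x^{\sigma^{2}}y)=\omega(1\otimes x)\cdot\omega(1\otimes y)^{\rho},
\]
\[
\omega(1\otimes cx^{\sigma}y)=(cx^{\sigma}y,c^{\sigma}x^{\sigma^{2}}y^{\sigma},c^{\sigma^{2}}xy^{\sigma^{2}})=\gamma\cdot\omega(1\otimes x)^{\rho}\cdot\omega(1\otimes y),
\]
where $\gamma=\omega(1\otimes c)=(c,c^{\sigma},c^{\sigma^{2}})$. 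Subtracting yields the formula for $\nu$.

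Finally, from $\rho((x_i))=(x_{i+1})$ one reads off $e_j^{\rho}=e_{j-1}$ with indices mod $3$; hence $e_i\cdot e_j^{\rho}=e_ie_{j-1}$ equals $e_i$ when $j=i+1$ and $0$ otherwise, while $\gamma\cdot e_i^{\rho}\cdot e_j=\gamma e_{i-1}e_j$ equals $c_je_j$ when $i=j+1$ and $0$ otherwise; combining with the minus sign gives the stated three-case formula. The only real difficulty is careful bookkeeping of indices modulo $3$; conceptually, everything is forced once $\omega$ is fixed, and the whole statement is a direct transport of structure along the splitting isomorphism of a Galois extension.
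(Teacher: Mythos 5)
Your proposal is correct and follows essentially the same route as the paper: the paper's own justification is precisely this transport of structure along the standard splitting isomorphism $\omega\colon K\otimes K\to K^3$ of the Galois extension, with the proposition stated as a summary of those computations. You have merely filled in the details the paper leaves implicit (Dedekind's independence for the bijectivity of $\omega$, the reduction by $K$-bilinearity to pairs $(1\otimes x,1\otimes y)$, and the index bookkeeping for the basis table), and all of these check out.
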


The isomorphism $\omega$ transforms the automorphism $\sigma\otimes 1$ of $K\otimes K$ into 
an automorphism $\lambda$ of $K^3$ given by
$\lambda\colon (x_i)\mapsto (x_{i-1}^{\sigma})$.
This is not $K$-linear but semi-linear relative to $\sigma$.
It permutes the basis as
$\lambda\colon e_i\mapsto e_{i+1}$.

\section{Split Albert algebras}

We consider here an algebra of the form $(K^3,\nu)$ of Section 1 a little more generally, the base field set down to $F$.

Let $U$, $V$, $W$ be three-dimensional vector spaces over $F$ respectively having bases
$(\alpha_i)$, $(\beta_i)$, $(\gamma_i)$, where  the index $i$ runs through $0,1,2$.
Let $d_0,d_1,d_2\in F^{\times}$.  Define a bilinear map
$\phi\colon U\times V\to W$ by
$$
\phi(\alpha_i,\beta_i)=0,\;
\phi(\alpha_i,\beta_{i+1})=\gamma_{i+2},\;
\phi(\alpha_i,\beta_{i+2})=d_{i+1}\gamma_{i+1},
$$
where the index is taken modulo 3.
Put $d=d_0d_1d_2$.
We assume $d\ne -1$ throughout.
Let us call $\phi$ a {\itshape split Albert algebra}.

The multiplication $\nu\colon K^3\times K^3\to K^3$ of Section 1 is a special case of $\phi$ where $F$ is set as $K$ and
$U=V=W=K^3$, 
$\alpha_i=e_i, \; \beta_i=e_i,\; \gamma_i=e_{i-1}$,
and
$d_i=-c^{\sigma^i}$.
Note that $d=-N(c)$.

When only a single $\phi$ is concerned, we write $\phi(x,y)=xy$.

For $x\in U$ let $L_x\colon V\to W$ be the map $y\mapsto xy$. 
For $y\in V$ let $R_y\colon U\to W$ be the map $x\mapsto xy$.

Let $x=x_0 \alpha_0+x_1\alpha_1+x_2\alpha_2$.
Then
$x\beta_i=x_{i-1}\gamma_{i+1}+x_{i+1}d_{i-1}\gamma_{i-1}$.
Relative to the present bases the linear map $L_x\colon V\to W$ is represented by a matrix
$$
\begin{pmatrix}
0 & d_0 x_2 & x_1 \\
x_2 & 0 & d_1 x_0 \\
d_2 x_1 & x_0 & 0 
\end{pmatrix}.
$$
Its determinant is $(1+d)x_0x_1x_2$.
Since $d\ne -1$, $L_x$ is invertible if and only if none of $x_i$ is zero, in which case call $x$ a {\itshape regular element}.
On the other hand, if $x_0=0$ and $(x_1,x_2)\ne (0,0)$, then $\Ker L_x$ is a one-dimensional space spanned by $x_1\beta_1-d_0x_2\beta_2$. Similarly, for any nonzero nonregular element $x\in U$ one sees that $\Ker L_x$ is one-dimensional. It follows that if $y,y'\in V$ are linearly independent over $F$,  then the map $U\to W^2\colon x\mapsto (xy,xy')$ is injective.

Let 
$y=y_0\beta_0+y_1\beta_1+y_2\beta_2$.
Then 
$\alpha_iy
=y_{i+1}\gamma_{i-1}+y_{i-1}d_{i+1}\gamma_{i+1}$.
Relative to the present bases the linear map $R_y\colon U\to W$ is represented by a matrix
$$
\begin{pmatrix}
0       & y_2    & d_0y_1\\
d_1 y_2 & 0      & y_0\\
y_1     & d_2y_0 & 0 
\end{pmatrix}
$$
with determinant 
$(1+d)y_0y_1y_2$.
So $R_y$ is invertible if and only if none of $y_i$ is zero. In this case we call $y$ a regular element.
Then $R_y^{-1}$ is represented by a matrix
$$
\frac{1}{1+d_0d_1d_2}
\begin{pmatrix}
-d_2\frac{y_0}{y_1y_2} & d_0d_2\frac{1}{y_2} & \frac{1}{y_1} \\
\frac{1}{y_2} & -d_0\frac{y_1}{y_0y_2} & d_0d_1\frac{1}{y_0} \\
d_1d_2\frac{1}{y_1} & \frac{1}{y_0} & -d_1\frac{y_2}{y_0y_1}
\end{pmatrix}.
$$

Kaplansky called an algebra $A$ a left Dickson algebra if the determinant of the left   multiplication $L_x$ is a product of linearly independent linear forms on  $x$ \cite{Kap1}. 
The right counterpart is called a right Dickson algebra.
Thus a split Albert algebra is a left and right Dickson algebra as long as $U=V=W$.

We discuss some isomorphisms between split Albert algebras.
For any bilinear maps $l\colon X\times Y\to Z$ and $l'\colon X'\times Y'\to Z'$, an isomorphism $l\to l'$ means a triple $(f,g,h)$ of linear isomorphisms $f\colon X\to X'$, $g\colon Y\to Y'$, $h\colon Z\to Z'$ such that $hl=l'(f\times g)$.
An isotopy between algebras is a special case where $X=Y=Z$, $X'=Y'=Z'$.

We use notation $\phi=\phi_{d_0,d_1,d_2}$ to make clear the dependence on $d_i$.
Given $r_i, s_i\in F^{\times}$ let $f\colon U\to U$, $g\colon V\to V$, $h\colon W\to W$ be 
respectively the linear maps
$$
\alpha_i\mapsto 
r_i\alpha_i,\quad
\beta_i\mapsto s_i\beta_i,\quad
\gamma_i\mapsto r_{i+1}s_{i+2}\gamma_i.
$$
Then $(f,g,h)$ gives an isomorphism $\phi_{d_0,d_1,d_2}\to \phi_{d_0',d_1',d_2'}$, 
where
$$
d_{i}'=\frac{r_{i+1}}{r_{i-1}}\frac{s_{i-1}}{s_{i+1}} d_{i}.
$$
Note that 
$d_0d_1d_2=d_0'd_1'd_2'$.

The linear isomorphisms 
$$
\alpha_i\mapsto \alpha_{i+1},\quad
\beta_i\mapsto \beta_{i+1},\quad
\gamma_i\mapsto \gamma_{i+1}
$$
give an isomorphism $\phi_{d_0,d_1,d_2}\to \phi_{d_2,d_0,d_1}$.

The linear isomorphisms
$$
\alpha_i\mapsto \alpha_{1-i},\quad
\beta_i\mapsto \beta_{1-i},\quad
\gamma_i\mapsto  d_i^{-1}\gamma_{1-i}
$$
give an isomorphism $\phi_{d_0,d_1,d_2}\to \phi_{d_1^{-1},d_0^{-1},d_2^{-1}}$.

\begin{prop}
Let $y, y'\in V$ be regular elements.
The characteristic polynomial of $R_{y'}^{-1}R_y$ is given by
$$
\det(XI-R_{y'}^{-1}R_y)=(X-\frac{y_0}{y'_0})(X-\frac{y_1}{y'_1})(X-\frac{y_2}{y'_2}).
$$
\end{prop}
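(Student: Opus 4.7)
The plan is to exploit the two structural facts about right multiplications that were already recorded above: the assignment $z\mapsto R_z$ is $F$-linear in $z$, and $\det R_z=(1+d)z_0z_1z_2$. Combining these reduces the characteristic polynomial to an almost trivial factorization, without having to carry out any $3\times 3$ inversion by hand.

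Concretely, since $y'$ is regular, $R_{y'}$ is invertible, so I would start from the identity
$$
\det(XI-R_{y'}^{-1}R_y)=\frac{\det(XR_{y'}-R_y)}{\det R_{y'}}.
$$
Linearity of $z\mapsto R_z$ in its subscript identifies $XR_{y'}-R_y$ with $R_{Xy'-y}$, now regarded as a matrix over the polynomial ring $F[X]$ whose $i$-th ``coordinate'' is $Xy_i'-y_i$. The determinant formula $\det R_z=(1+d)z_0z_1z_2$ is a polynomial identity in the coordinates of $z$, so it extends to this formal setting and gives
$$
\det(XR_{y'}-R_y)=(1+d)\prod_{i=0}^{2}(Xy_i'-y_i).
$$
Dividing by $\det R_{y'}=(1+d)y_0'y_1'y_2'$ and pulling $y_i'$ into each bracket yields $\prod_i(X-y_i/y_i')$, which is the claimed factorization.

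There is essentially no obstacle in this argument. The cancellation of $(1+d)$ is legitimate because of the standing assumption $d\ne -1$, and regularity of $y'$ supplies the nonvanishing denominators $y_i'$. Note that regularity of $y$ is not needed to derive the identity itself; it only serves to guarantee that all three eigenvalues $y_i/y_i'$ are nonzero, which is consistent with $R_{y'}^{-1}R_y$ being an automorphism of $U$.
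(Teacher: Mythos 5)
Your proof is correct, and it takes a genuinely different route from the paper's. You exploit two facts: $z\mapsto R_z$ is linear in $z$, so $XR_{y'}-R_y=R_{Xy'-y}$ as matrices over $F[X]$, and $\det R_z=(1+d)z_0z_1z_2$ is a polynomial identity in the coordinates of $z$, hence valid in that formal setting; the characteristic polynomial then factors immediately after dividing by $\det R_{y'}=(1+d)y_0'y_1'y_2'$, with $d\ne -1$ justifying the cancellation. The paper instead computes the matrix of $R_{y'}^{-1}R_y$ explicitly (using the displayed formula for $R_{y'}^{-1}$), and identifies the trace, the sum of principal $2$-minors, and the determinant with the elementary symmetric polynomials in $t_i=y_i/y_i'$. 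Your argument is shorter, avoids all matrix inversion, and as you note does not even need $y$ to be regular. What the paper's computation buys, however, is the explicit matrix of $R_{y'}^{-1}R_y$ itself: that matrix is not a throwaway by-product but is quoted entry-by-entry in the proof of Proposition 3.3 (Case I compares the $(0,1)$- and $(0,2)$-entries, Case II the diagonal and off-diagonal entries). So if one adopted your proof of Proposition 2.1, the explicit matrix would still have to be computed separately for Section 3.
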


\begin{proof}
Relative to the basis $(\alpha_i)$, $R_{y'}^{-1}R_y$ is represented by a matrix
$$
\frac{1}{1+d_0d_1d_2}
\begin{pmatrix}
d_0d_1d_2t_2+t_1
& d_2(t_0-t_2)\frac{y'_0}{y'_1}
& d_2d_0(t_0-t_1)\frac{y'_0}{y'_2}\\
d_0d_1(t_1-t_2)\frac{y'_1}{y'_0}
& d_0d_1d_2t_0+t_2
& d_0(t_1-t_0)\frac{y'_1}{y'_2} \\
d_1(t_2-t_1)\frac{y'_2}{y'_0} 
& d_1d_2(t_2-t_0)\frac{y'_2}{y'_1}
& d_0d_1d_2t_1+t_0
\end{pmatrix}
$$
with 
$t_i=\frac{y_i}{y'_i}$.
We know
$$
\det(R_{y'}^{-1}R_y)=\frac{y_0y_1y_2}{y_0'y_1'y_2'}=t_0t_1t_2.
$$
The trace of the above matrix is readily found to be
$t_0+t_1+t_2$.
One of the principal 2-minor of the matrix is
$$
\begin{vmatrix}
d_0d_1d_2t_2+t_1
& d_2(t_0-t_2)\frac{y'_0}{y'_1}\\
d_0d_1(t_1-t_2)\frac{y'_1}{y'_0}
& d_0d_1d_2t_0+t_2
\end{vmatrix}
=(1+d)(dt_0t_2+t_1t_2).
$$
It follows that the sum of the principal 2-minors of the matrix is equal to
$t_0t_1+t_1t_2+t_2t_0$.
This proves the proposition.

\end{proof}

\section{The equation $U(x,y)=U(x',y')$ for a split Albert algebra}

Let $\phi=\phi_{d_0,d_1,d_2}\colon U\times V\to W$ be a split Albert algebra. We regard $\phi$ as multiplication: $\phi(u,v)=uv$.
This induces a bilinear map $U\times V^2\to W^2$: $(u,(x,y))\mapsto (ux,uy)
=u(x,y)$.
We write $U(x,y)=\{u(x,y)\mid u\in U\}$ for $(x,y)\in V^2$. This is a subspace of $W^2$.

Recall that $x=\sum_i x_i\beta_i\in V$ is called a regular element if none of $x_i$ is zero.

The main result of this section is 

\begin{thm}
Let $x,y,x',y'\in V$ be regular elements. Then
$U(x,y)=U(x',y')$ if and only if
$(x',y')=k(x,y)$ or $(y,y')=k(x,x')$ for some $k\in F^{\times}$.
\end{thm}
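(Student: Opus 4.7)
The plan is to reduce the equation $U(x,y)=U(x',y')$ to an equality of two endomorphisms of $U$ whose matrices are given by Proposition~2.3, and then to extract the stated dichotomy from that matrix equality.

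For the ``if'' direction one checks both alternatives directly. If $(x',y')=k(x,y)$, then $U(x',y')=kU(x,y)=U(x,y)$ because the subspace is closed under $F$-scaling. If $(y,y')=k(x,x')$, i.e.\ $y=kx$ and $y'=kx'$, then $U(x,y)=\{(w,kw):w\in W\}=U(x',y')$, since regularity of $x$ and $x'$ makes $R_x$ and $R_{x'}$ surjective.

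For the ``only if'' direction I would first observe that, since $x$ and $x'$ are regular, both maps $u\mapsto(ux,uy)$ and $u'\mapsto(u'x',u'y')$ from $U$ to $W^2$ are injective. The hypothesis $U(x,y)=U(x',y')$ therefore yields a unique linear bijection $\theta\colon U\to U$ with $\theta(u)x'=ux$ and $\theta(u)y'=uy$ for every $u$, equivalently
\[
\theta=R_{x'}^{-1}R_x=R_{y'}^{-1}R_y.
\]
Proposition~2.3 then supplies explicit matrix forms for the two expressions on the right in the basis $(\alpha_i)$ of $U$: the first in terms of $s_i=x_i/x'_i$ and the ratios $x'_i/x'_j$, the second in terms of $t_i=y_i/y'_i$ and the ratios $y'_i/y'_j$. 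Equating the two matrices yields nine scalar equations.

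Comparing the three diagonal entries gives a cyclic linear system in the differences $a_i=t_i-s_i$ whose $3\times 3$ coefficient matrix has determinant $1+d^3$, where $d=d_0d_1d_2$. Thus, provided $1+d^3\ne 0$, one concludes $t_i=s_i$ for every $i$. With this in hand each of the six off-diagonal entries reduces to a condition of the form $(s_i-s_j)(x'_a y'_b-x'_b y'_a)=0$; a brief case analysis shows that as soon as two of $s_0,s_1,s_2$ differ, the resulting rank-two minor relations force $y'=cx'$ for some $c\in F^\times$.

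The proof finishes with a dichotomy. If $s_0=s_1=s_2=:s$, then $x=sx'$ and $y=sy'$, so $(x',y')=(1/s)(x,y)$, yielding alternative~(a). Otherwise $y'=cx'$ as above, and combined with $t_i=s_i$ this gives $y=cx$, so $(y,y')=c(x,x')$, yielding alternative~(b).

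The main obstacle is the exceptional case $1+d^3=0$ with $d\ne -1$, which occurs precisely when $F$ contains a primitive sixth root of unity and $d$ equals such a root. Here the diagonal equations admit a one-parameter family of nonzero solutions $(a_i)$, so $t_i=s_i$ cannot be concluded from the diagonal alone. I would handle this by combining the diagonal constraints with the off-diagonal ones: solving the off-diagonal relations for the ratios $y'_i/y'_j$ in terms of $x'_i/x'_j$ and the parameter of the exceptional kernel, and then using regularity of $x'$ (all $x'_i$ nonzero) to force that parameter to vanish, thereby reducing to the generic case.
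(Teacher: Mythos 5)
Your reduction to $R_{x'}^{-1}R_x=R_{y'}^{-1}R_y$ is exactly the paper's Proposition 3.2, but from there you take a genuinely different route. The paper invokes Proposition 2.1 (the characteristic polynomial of $R_{y'}^{-1}R_y$) to conclude that $(s_0,s_1,s_2)$ and $(t_0,t_1,t_2)$ agree \emph{up to permutation}, and then runs a case analysis over the permutation type (identity, $3$-cycle, transposition). You instead equate matrix entries directly: the three diagonal entries give the circulant system $d\,a_2+a_1=0$, $d\,a_0+a_2=0$, $d\,a_1+a_0=0$ in $a_i=t_i-s_i$, whose determinant is indeed $1+d^3$, so when $1+d^3\ne 0$ you get $s_i=t_i$ outright with no permutation ambiguity. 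Your handling of the off-diagonal entries under $s_i=t_i$ is also correct: whenever two of the $s_i$ differ, all three minors $x'_iy'_j-x'_jy'_i$ are forced to vanish (regularity of $x',y'$ lets you divide), giving $y'=cx'$ and hence $y=cx$; otherwise $(x',y')=k(x,y)$. In the generic case your argument is cleaner than the paper's, which must still dispose of Cases II and III. (Incidentally, the matrix you need is the one displayed in the proof of Proposition 2.1; there is no Proposition 2.3.)

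The genuine gap is the exceptional case $1+d^3=0$, i.e. $d^2-d+1=0$ with $d\ne -1$, which you yourself flag as ``the main obstacle'' and then only sketch (``I would handle this by\dots''). This is precisely where the paper has to work hardest (its Case II(ii)), and its resolution there is characteristic-sensitive: in characteristic $3$ the case is vacuous because $d^2-d+1=(d+1)^2$ forces the excluded value $d=-1$, and otherwise a contradiction is extracted from the off-diagonal relations. Your proposed mechanism --- solve the off-diagonal equations and use regularity of $x'$ to kill the kernel parameter --- can in fact be pushed through, but not as stated: with a nonzero kernel parameter the off-diagonal equations force $y'_i/y'_j=-\,x'_i/x'_j$ for all three pairs, and the contradiction comes from multiplicativity of ratios ($\frac{y'_0}{y'_1}\frac{y'_1}{y'_2}=\frac{y'_0}{y'_2}$), which yields $2\,x'_0/x'_2=0$; this is no contradiction in characteristic $2$, where the exceptional case genuinely occurs ($d$ is then a primitive \emph{cube} root of unity --- not a sixth root, as no finite field of characteristic $2$ has elements of order $6$) and a separate argument is needed (there the diagonal-difference relations themselves force the parameter to vanish, since $2d-1=1\neq 0$). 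None of this case analysis is carried out in your proposal, so as written the proof is incomplete at exactly the point where the theorem's content (the exclusion $d\neq -1$, the role of the characteristic) actually bites.
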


The theorem will follow from the two propositions below.
Recall from Section 2 that for regular elements $x,x'\in V$ we have a linear map
$R_{x'}^{-1}R_x\colon U\to U$.

\begin{prop}
Let $x,y,x',y'\in V$ be regular elements. Then
$U(x,y)=U(x',y')$ if and only if $R_{x'}^{-1}R_x=R_{y'}^{-1}R_y$.
\end{prop}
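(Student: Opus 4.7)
The plan is to exploit regularity to turn each $U(x,y)$ into the graph of an explicit linear map $W \to W$, and then read off the equivalence from equality of graphs.

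First I would observe that since $x$ is regular, $R_x \colon U \to W$ is a linear isomorphism by the calculation in Section~2 (determinant $(1+d)x_0x_1x_2 \ne 0$). Therefore the parametrization $u \mapsto R_x u = ux$ is a bijection from $U$ onto $W$, and we may reparametrize
\[
U(x,y) = \{(ux, uy) : u \in U\} = \{(w, R_y R_x^{-1} w) : w \in W\},
\]
which is precisely the graph of the linear map $T := R_y R_x^{-1} \colon W \to W$. Likewise, since $x'$ is regular, $U(x',y')$ is the graph of $T' := R_{y'} R_{x'}^{-1}$.

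Two graphs of linear maps $W \to W$ coincide if and only if the maps are equal, so $U(x,y) = U(x',y')$ if and only if $R_y R_x^{-1} = R_{y'} R_{x'}^{-1}$. Multiplying both sides on the right by $R_x$ and on the left by $R_{y'}^{-1}$ (both invertible by the regularity of $y'$ and $x$) yields the equivalent equation $R_{y'}^{-1} R_y = R_{x'}^{-1} R_x$, which is the statement of the proposition.

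There is no real obstacle here; the only point worth double-checking is the direction of composition in the rearrangement, and the fact that $R_y$, $R_{y'}$ are also invertible so that the rewriting is reversible — but this is ensured by the regularity hypothesis on all four vectors, together with the determinant formula for $R_y$ recorded in Section~2.
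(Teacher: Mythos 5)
Your proof is correct and rests on the same mechanism as the paper's: both use the invertibility of the right multiplications $R_x, R_{x'}$ (regularity) to convert equality of the subspaces into equality of composite linear maps. The paper phrases this as the correspondence $a \mapsto a'$ computed two ways, giving $R_{x'}^{-1}R_x = R_{y'}^{-1}R_y$ directly, while you phrase it as equality of graphs of $R_yR_x^{-1}$ and $R_{y'}R_{x'}^{-1}$ followed by a rearrangement; the difference is purely cosmetic.
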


\begin{proof}
Suppose $U(x,y)=U(x',y')$. For any $a\in U$ there exists a unique $a'\in U$ such that
$a(x,y)=a'(x',y')$,
namely
$ax=a'x',\; ay=a'y'$.
Then
$a'=R_{x'}^{-1}R_x(a), \; a'=R_{y'}^{-1}R_y(a)$,
hence
$R_{x'}^{-1}R_x(a)=R_{y'}^{-1}R_y(a)$. Thus
$R_{x'}^{-1}R_x=R_{y'}^{-1}R_y$.
The argument can be reversed.
\end{proof}

\begin{prop}
Let $x,y,x',y'\in V$ be regular elements.
We have $R_{x'}^{-1}R_x=R_{y'}^{-1}R_y$ if and only if
$(x',y')=k(x,y)$ or $(y,y')=k(x,x')$ for some $k\in F^{\times}$.
\end{prop}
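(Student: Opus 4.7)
The plan is to equate the matrix representations of $R_{x'}^{-1}R_x$ and $R_{y'}^{-1}R_y$ in the basis $(\alpha_i)$, using the explicit matrix formula computed in the proof of Proposition 2.1, and then extract the geometric conclusion from the resulting nine scalar equations. The easy direction is a direct calculation using bilinearity of $\phi$. If $(x', y') = k(x, y)$, then $R_{x'} = k R_x$ and $R_{y'} = k R_y$, so both composites equal $k^{-1}\mathrm{id}_U$. If instead $(y, y') = k(x, x')$, then $R_y = kR_x$ and $R_{y'} = kR_{x'}$, giving $R_{y'}^{-1}R_y = k^{-1}R_{x'}^{-1} \cdot k R_x = R_{x'}^{-1}R_x$.

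For the nontrivial direction, set $s_i = x_i/x_i'$, $t_i = y_i/y_i'$, $u_i = s_i - t_i$, and $d = d_0 d_1 d_2$. The three diagonal entries of the matrix equality produce
$$du_2 + u_1 = du_0 + u_2 = du_1 + u_0 = 0,$$
while the six off-diagonal entries split into three pairs; each pair shares a fixed factor $(s_i - s_j)$ on the left and $(t_i - t_j)$ on the right, differing only in which ratio of the coordinates of $x'$ (resp.\ $y'$) multiplies it.

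I then split into cases. If all $s_i$ are equal, every off-diagonal equation has a vanishing left side, forcing the corresponding $t_i - t_j$ to vanish; the diagonal equations reduce to $(1+d)(s - t) = 0$, so $s = t$ (using $d \ne -1$) and $(x, y) = s(x', y')$. Otherwise some $s_i \ne s_j$; dividing the corresponding pair of off-diagonal equations gives $x_i'/x_j' = y_i'/y_j'$, and a short check on which of the $s_i$ coincide shows these ratio equalities propagate to $y' = \lambda x'$ for some $\lambda \in F^{\times}$. Substituting $y' = \lambda x'$ back into the off-diagonals yields $u_i = u_j$ whenever $s_i \ne s_j$, and the complementary case $s_i = s_j$ forces $t_i = t_j$ directly, so all three $u_i$ equal a common value $c$. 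One diagonal equation then gives $(1+d)c = 0$, hence $c = 0$, i.e., $s_i = t_i$; combining this with $y' = \lambda x'$ gives $y = \lambda x$ and $(y, y') = \lambda(x, x')$.

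The main anticipated obstacle is the case $d^3 = -1$ with $d \ne -1$, which can occur whenever the ground field contains a primitive sixth root of unity: here the diagonal system alone admits a one-parameter family of solutions and is by itself insufficient to force $u = 0$. The resolution is already built into the case analysis above: the off-diagonal entries always force the three $u_i$ to collapse to a common value, after which the bare assumption $d \ne -1$ finishes the argument.
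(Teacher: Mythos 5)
Your proof is correct, but it follows a genuinely different route from the paper's. The paper first applies Proposition 2.1 as stated: equal operators have equal characteristic polynomials, so $(s_0,s_1,s_2)$ and $(t_0,t_1,t_2)$ agree up to a permutation; it then distinguishes the permutation types (identity, $3$-cycle, transposition, after symmetry reductions) and compares matrix entries within each case. The $3$-cycle case is the delicate one there: the diagonal entries only give $(1-d+d^2)(t_1-t_2)=0$, and when $1-d+d^2=0$ the paper must combine off-diagonal entries with a characteristic-$3$ discussion to reach a contradiction. You skip the eigenvalue-permutation step entirely, using only the explicit matrix from the proof of Proposition 2.1, and split instead on whether all $s_i$ coincide. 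Your key device is dividing the two off-diagonal equations that share the factor $(s_i-s_j)$ on one side and $(t_i-t_j)$ on the other; this eliminates the unknown $t$-differences and yields $x_i'/x_j'=y_i'/y_j'$ with no information needed about how $(s_i)$ and $(t_i)$ are related (the division is legitimate, since $s_i\ne s_j$ forces $t_i\ne t_j$ from the same pair of equations, so all four sides are nonzero). Consequently the problematic locus $d^3=-1$, $d\ne -1$ is handled uniformly: the off-diagonals force $u_0=u_1=u_2$ before the diagonal equations are ever invoked, and then $d\ne -1$ alone kills the common value. The paper's route buys economy of tools (it reuses Proposition 2.1, which it needs again in Proposition 6.7) and a conceptual explanation of the case structure via eigenvalues; yours buys a shorter, more self-contained argument that avoids the $1-d+d^2=0$ and characteristic-$3$ sub-cases altogether.
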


\begin{proof}
The sufficiency is clear.
Let us prove the necessity.
Suppose $R_{x'}^{-1}R_x=R_{y'}^{-1}R_y$.
Put $s_i=x_i/x'_i$, $t_i=y_i/y'_i$.
By Proposition 2.1 the triples $(s_0,s_1,s_2)$ and $(t_0,t_1,t_2)$ are equal up to permutation.
We divide cases according as the type of the permutation.
By the cyclic symmetry $i\mapsto i+1$ of the indices and the symmetry $(x,x')\leftrightarrow (y,y')$, it is enough to consider  three cases. 
Case I: $(s_0,s_1,s_2)=(t_0,t_1,t_2)$;
Case II: $(s_2, s_0, s_1)=(t_0, t_1, t_2)$;
Case III: $(s_0, s_2, s_1)=(t_0, t_1, t_2)$.

\medskip
Case I: $(s_0,s_1,s_2)=(t_0,t_1,t_2)$.
Then $x_i/x_i'=y_i/y_i'$ and $y_i/x_i=y_i'/x_i'$.

(i) Case where $t_0,t_1,t_2$ are not all equal.
By the cyclic symmetry we may assume $t_0\ne t_1, t_0\ne t_2$.
The matrix of $R_{y'}^{-1}R_y$ is exhibited in Section 2.
Its rows and columns will be indexed by $0,1,2$.
Comparing the $(0,1)$-entry and $(0,2)$-entry of the matrices of $R_{x'}^{-1}R_x$ and $R_{y'}^{-1}R_y$, we have
$$
(s_0-s_2)\frac{x_0'}{x_1'}=(t_0-t_2)\frac{y_0'}{y_1'},\quad
(s_0-s_1)\frac{x_0'}{x_2'}=(t_0-t_1)\frac{y_0'}{y_2'}.
$$
Since $s_0-s_2=t_0-t_2\ne 0$, $s_0-s_1=t_0-t_1\ne 0$, it follows that
$$
\frac{x_0'}{x_1'}=\frac{y_0'}{y_1'},\quad
\frac{x_0'}{x_2'}=\frac{y_0'}{y_2'},
$$
hence
$$
\frac{y_0'}{x_0'}=\frac{y_1'}{x_1'}=\frac{y_2'}{x_2'}.
$$
Call this element $k$.
Then 
$y'=k x'$.
We have also $y_i/x_i=k$, 
$y=k x$. 

(ii) Case where $t_0=t_1=t_2$.
Put $t_i=k$. Then $y=ky'$ and $x=kx'$.

\medskip
Case II: $(s_2,s_0,s_1)=(t_0,t_1,t_2)$.
Comparing the diagonal entries of the matrices of $R_{x'}^{-1}R_x$ and $R_{y'}^{-1}R_y$, we have
$$
ds_2+s_1=dt_2+t_1,\quad
ds_0+s_2=dt_0+t_2,\quad
ds_1+s_0=dt_1+t_0.
$$
Substituting $s_i=t_{i+1}$, we have
$$
dt_0+t_2=dt_2+t_1,\quad
dt_1+t_0=dt_0+t_2,\quad
dt_2+t_1=dt_1+t_0.
$$
Thus
\begin{align*}
dt_0-t_1+(1-d)t_2&=0, \tag1\\
dt_1-t_2+(1-d)t_0&=0, \tag2\\
dt_2-t_0+(1-d)t_1&=0. \tag3
\end{align*}
Comparing the off-diagonal entries of the two matrices and substituting $s_i=t_{i+1}$, we have
\begin{align*}
(t_1-t_0)\frac{x_0'}{x_1'}&=(t_0-t_2)\frac{y_0'}{y_1'}, \tag4\\
(t_1-t_2)\frac{x_0'}{x_2'}&=(t_0-t_1)\frac{y_0'}{y_2'}, \tag5\\
(t_2-t_0)\frac{x_1'}{x_0'}&=(t_1-t_2)\frac{y_1'}{y_0'}, \tag6\\
(t_2-t_1)\frac{x_1'}{x_2'}&=(t_1-t_0)\frac{y_1'}{y_2'}, \tag7\\
(t_0-t_2)\frac{x_2'}{x_0'}&=(t_2-t_1)\frac{y_2'}{y_0'}, \tag8\\
(t_0-t_1)\frac{x_2'}{x_1'}&=(t_2-t_0)\frac{y_2'}{y_1'}. \tag9
\end{align*}
By elementary operations one sees that (1)--(3)  are equivalent to equations
\begin{align*}
(1-d+d^2)(t_1-t_2)&=0, \tag{10}\\
t_0-t_1+d(t_1-t_2)&=0. \tag{11}
\end{align*}

(i) Case where $1-d+d^2\ne 0$.
Then $t_0=t_1=t_2$, 
which falls into (ii) of Case I.

(ii) Case where $1-d+d^2=0$.
By (11), if $t_1=t_2$, then $t_0=t_1=t_2$ again.
Assume $t_1\ne t_2$.
Putting (11) into  (5) and (7), we have
$$
(t_1-t_2)\frac{x_0'}{x_2'}=-d(t_1-t_2)\frac{y_0'}{y_2'}, \quad
(t_2-t_1)\frac{x_1'}{x_2'}=-d(t_2-t_1)\frac{y_1'}{y_2'},
$$
hence
$$
\frac{x_0'}{x_2'}=-d\frac{y_0'}{y_2'}, \quad \frac{x_1'}{x_2'}=-d\frac{y_1'}{y_2'}.
$$
These two equations yield
$$
\frac{x_0'}{x_1'}=\frac{y_0'}{y_1'}.
$$
Putting this into (4) and (6), we have
$$
t_1-t_0=t_0-t_2,\quad t_2-t_0=t_1-t_2,
$$
hence
$$
2t_0=t_1+t_2, \quad 2t_2=t_0+t_1.
$$
It follows that
$$
2(t_0-t_2)=t_2-t_0.
$$
If $\text{char}(F)\ne 3$, then
$t_0-t_2=0$, 
so
$t_0=t_1=t_2$,
contrary to the present assumption.
If $\text{char}(F)=3$, the equation  $d^2-d+1=0$ gives $d=-1$, which is to be excluded.

\medskip
Case III: $(s_0, s_2, s_1)=(t_0, t_1, t_2)$.
Comparing the $(2,2)$-entry of the two matrices, we have
$ds_1+s_0=dt_1+t_0$.
In our case this becomes
$dt_2+t_0=dt_1+t_0$. 
This gives $t_1=t_2$. So $s_1=s_2$.
Then $(s_0, s_1, s_2)=(t_0,t_1,t_2)$.
So we are back in Case I.

\medskip
We conclude that
$y=kx, \; y'=kx'$
or $x'=kx, \; y'=ky$
for some $k\in F^{\times}$.
This proves the proposition.
\end{proof}

\section{The equation $Av=Av'$ for a division algebra}

Let $F$ be a finite field. Let $A$ be a three-dimensional nonassociative division algebra over $F$.
We have the left action of $A$ on $A^2$: $a(x,y)=(ax,ay)$.
For $v=(x,y)\in A^2$ we say $v$ is {\itshape nondegenerate} if $x, y$ are linearly independent over $F$, and {\itshape degenerate} otherwise.

\begin{thm} 
Let $v, v'\in A^2$ be nondegenerate vectors. Then
$Av=Av'$ if and only if $Fv=Fv'$.
\end{thm}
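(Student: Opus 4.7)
The \lq if\rq\ part is immediate: if $v'=kv$ with $k\in F^{\times}$, then $F$-bilinearity of the multiplication in $A$ gives $a(kv)=k(av)$ for every $a\in A$, so $Av'=k\cdot Av=Av$. For the nontrivial direction, the plan is to combine two reductions. First, by Theorem 1.1, I replace $A$ by an isotopic twisted field; then I extend scalars from $F$ to the splitting field $K$, reducing the problem to the split Albert algebra setting of Section 2 so that Theorem 3.1 applies. A brief Galois-descent then forces the scalar relating $v$ and $v'$ to lie in $F$.

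For the isotopy step, an isotopy $(f,g,h)\colon A\to A'$ induces a componentwise linear isomorphism $g_2\colon A^2\to A'^2$, $(x,y)\mapsto(g(x),g(y))$, with $h_2(Av)=A'\,g_2(v)$ (where $h_2$ applies $h$ componentwise). Thus $Av=Av'$ transports to $A'g_2(v)=A'g_2(v')$, nondegeneracy is preserved, and $Fv=Fv'$ is preserved; so I may assume $A=(K,\mu)$ is the twisted field associated with $(K/F,\sigma,c)$. For the scalar extension, Proposition 1.2 identifies the scalar-extended algebra with $(K^3,\nu)$ via $\omega$, under which $x\in A$ (viewed as an element of $A\otimes_{F}K$) corresponds to $(x,x^\sigma,x^{\sigma^2})\in V=K^3$. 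A nondegenerate vector $v=(x,y)\in A^2$ therefore extends to
$$(X,Y)=\bigl((x,x^\sigma,x^{\sigma^2}),\,(y,y^\sigma,y^{\sigma^2})\bigr)\in V^2,$$
and similarly $v'\mapsto(X',Y')$. Because $x$ and $y$ are both nonzero, $X$ and $Y$ are regular in $V$, and the same holds for $X',Y'$. The hypothesis $Av=Av'$ extends to an equality of $K$-subspaces of $V^2$, which under $\omega$ reads $U(X,Y)=U(X',Y')$.

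Now Theorem 3.1, applied with base field $K$, furnishes $k\in K^{\times}$ with either (a) $(X',Y')=k(X,Y)$ or (b) $(Y,Y')=k(X,X')$. In case (a), comparing the first and second coordinates of $X'=kX$ yields $x'=kx$ and $(x')^\sigma=kx^\sigma$; combined with $(x')^\sigma=(kx)^\sigma=k^\sigma x^\sigma$, this forces $k^\sigma=k$, so $k\in F^{\times}$. Hence $v'=kv$ and $Fv=Fv'$. In case (b) the same computation gives $k\in F$, whence $y=kx$ with $k\in F$, contradicting the nondegeneracy of $v$; so (b) cannot occur. The principal obstacle is the scalar-extension bookkeeping, since one must keep the tensor-factor conventions of Section 1 straight so that $v$ is really carried into the Galois-conjugate form displayed above: it is precisely the presence of $\sigma$-conjugates across the coordinates of $X$ and $Y$ that produces the identity $k^\sigma=k$ needed to descend the scalar from $K$ down to $F$.
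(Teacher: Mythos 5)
Your proof is correct and follows essentially the same route as the paper: reduce by isotopy (Theorem 1.1) and scalar extension (Proposition 1.2) to the split Albert algebra, apply Theorem 3.1, descend the scalar $k$ to $F$, and rule out the case $(Y,Y')=k(X,X')$ by nondegeneracy. The only difference is bookkeeping: the paper argues regularity via invertibility of right multiplications in the division algebra and descends $k$ through the abstract isomorphism $g$, while you use the explicit Galois-conjugate coordinates $(x,x^{\sigma},x^{\sigma^{2}})$ for both points, which is an equivalent instantiation of the same argument.
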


\begin{proof}
Let $K/F$ be a cubic extension with generating automorphism $\sigma$.
By Theorem 1.1 $A$ is isotopic to the twisted field $(K, \mu)$ associated with an element $c\in K^{\times}$.
By Proposition 1.2 we have an isomorphism $(f,g,h)$ of bilinear maps from 
the multiplication  map $(K\otimes A)\times (K\otimes A)\to K\otimes A$ to 
a split Albert algebra $\phi_{d_0,d_1,d_2}\colon U\times V\to W$ over $K$.
Write $v=(x,y)$, $v'=(x',y')$.
Suppose $Av=Av'$. Then $U(g(x),g(y))=U(g(x'), g(y'))$.
Since $x\ne 0$ and $A$ is a division algebra, the right multiplication $a\mapsto ax$ on $A$ is invertible. Hence $R_{g(x)}\colon U\to W$ is invertible, namely $g(x)$ is a regular element.
Similarly $g(y), g(x'), g(y')$ are regular elements.
Applying Theorem 3.1 to these elements, we have
$(g(x'), g(y'))=k(g(x),g(y))$ or 
$(g(y), g(y'))=k(g(x), g(x'))$
for some $k\in K^{\times}$.
Since $g$ is an isomorphism, we have
$(x',y')=k(x,y)$ or  $(y,y')=k(x,x')$.
In either case $k\in F^{\times}$.
But the second case would imply that $v, v'$ are degenerate.
Therefore we must have $v'=kv$.
\end{proof}

As for degenerate vectors $v,v'\in A^2$ it is easy to decide when $Av=Av'$. See Proposition 6.3.

\section{Commutative algebras}

\begin{thm} Let $F$ be a finite field. Let $A$ be a three-dimensional nonassociative division algebra over $F$. 
Suppose that $A$ is isotopic to a commutative algebra.
Then there exist $v,v'\in A^2$ such that $\dim(Av\cap Av')=2$.
\end{thm}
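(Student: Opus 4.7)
The plan is to reduce to the case $A=(K,\mu)$ with $\mu(x,y)=xy^\sigma+x^\sigma y$ (the commutative Albert twisted field), recast the problem as a rank condition on an $F$-linear operator, and then construct $v,v'$ using the extra symmetries of this commutative case.

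For the reduction, I note that $\dim_F(Av\cap Av')$ is an isotopy invariant: an isotopy $(f,g,h)\colon A\to B$ gives an $F$-linear bijection $(x,y)\mapsto(h(x),h(y))\colon A^2\to B^2$ that sends $Av$ to $B(g(x),g(y))$, since $h(ax)=f(a)g(x)$. Combined with Theorem 1.1 and the remark in Section 1 identifying the commutative twisted field with the case $c=-1$, we may assume $A=(K,\mu)$ with $\mu$ as above. For $v=(x_1,y_1)$ and $v'=(x_2,y_2)$ with $x_i,y_i\neq 0$, writing $w\in Av\cap Av'$ uniquely as $w=av=bv'$ and using that $R_{x_2}$ and $R_{y_2}$ are bijective yields $b=R_{x_2}^{-1}R_{x_1}(a)=R_{y_2}^{-1}R_{y_1}(a)$, so
\[
\dim_F(Av\cap Av')=\dim_F\Ker\bigl(R_{x_2}^{-1}R_{x_1}-R_{y_2}^{-1}R_{y_1}\bigr).
\]
Thus the theorem amounts to finding regular $x_i,y_i\in K^\times$ for which this $F$-linear operator on $A$ has rank exactly one. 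To produce such a quadruple, I would exploit extra symmetries of the commutative case: the swap $\tau(p,q)=(q,p)$ satisfies $\tau(Av)=A(\tau v)$ because $\mu$ is symmetric, and $\sigma\in\mathrm{Gal}(K/F)$ is an $F$-linear algebra automorphism of $A$ since $c=-1$ is $\sigma$-fixed. Natural ans\"atze are $v'=\tau v=(y,x)$, $v'=\sigma(v)$, or $v'=xv$ with $x\in K\setminus F$ (in which case $v'\in Av$ automatically, while Theorem A forces $Av'\neq Av$ generically).

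The main obstacle is that the most natural choices fail. For $v=(1,x)$, $v'=(x,1)$, a block-matrix analysis of the intersection system, after the change of variable $u=a+b$, $v=a-b$, reduces to kernels of $R_{1-x}$ and $R_{1+x}$, both trivial in a division algebra when $1\pm x\neq 0$; similarly the swap ansatz $v=(x,y)$, $v'=(y,x)$ gives kernels of $R_{x\pm y}$. Thus $v,v'$ must be chosen more delicately, with a constraint likely tied to the case $c=-1$ (for instance an algebraic identity using the commutativity of $\mu$ combined with the cubic arithmetic of $K/F$). Producing such a pair and verifying that the operator $R_{x_2}^{-1}R_{x_1}-R_{y_2}^{-1}R_{y_1}$ has rank exactly one is the main technical step; I expect it can be carried out via an explicit parametrization inspired by the split form of Proposition 2.1, whose matrix entries simplify considerably when $d_0=d_1=d_2=1$ (the split form of the commutative case).
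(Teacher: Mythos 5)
Your proposal has a genuine gap at its decisive step: the pair $(v,v')$ is never produced. The framework you set up is sound --- isotopy invariance of intersection dimensions, and the identification $\dim_F(Av\cap Av')=\dim_F\Ker\bigl(R_{x'}^{-1}R_x-R_{y'}^{-1}R_y\bigr)$ for vectors with nonzero entries --- but after observing that the symmetric ans\"atze ($v'=\tau v$, $v'=\sigma(v)$, $v'=xv$) all give trivial intersection, you defer ``the main technical step,'' which is the entire content of the theorem. The missing idea is in fact elementary and needs no split form and no special parametrization: once $A$ is commutative, take $v=(x,y)$ nondegenerate, pick any $x'\in A-Fx$, and define $y'=L_x^{-1}(x'y)$, i.e.\ solve $xy'=x'y$ (possible since $L_x$ is invertible in a division algebra). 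Commutativity then gives $x'x=xx'$ and $y'x=xy'=x'y=yx'$, $y'y=yy'$, hence $x'v=xv'$ and $y'v=yv'$ for $v'=(x',y')$. In your operator language, the kernel of $R_{x'}^{-1}R_x-R_{y'}^{-1}R_y$ contains the two independent elements $x'$ and $y'$, so $\spa{x',y'}v=\spa{x,y}v'\subseteq Av\cap Av'$ is two-dimensional; and since $x,x'$ are independent, $Fv\ne Fv'$, so Theorem 4.1 gives $Av\ne Av'$ and the intersection is exactly two-dimensional. Your ans\"atze failed precisely because they impose relations forcing $a=\pm b$; the correct choice breaks that symmetry by fixing $x'$ freely and solving for $y'$.

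There is also a secondary flaw in your reduction. Isotopy invariance lets you assume $A$ itself is commutative, but not that $A$ is the twisted field $(K,\mu)$ with $c=-1$: the remark in Section 1 only says $c=-1$ implies commutativity, and Theorem 1.1 only gives that $A$ is isotopic to \emph{some} $(K,\mu_c)$ with $N(c)\ne 1$. Concluding that this $c$ can be taken with $N(c)=-1$ is equivalent to the hard half of Theorem B (Theorem 8.1), which is not available to you here and is certainly heavier machinery than the statement needs. Since your plan leans on the specific split form $d_0=d_1=d_2=1$, this unjustified reduction is not cosmetic; the construction above avoids it by working in an arbitrary commutative division algebra.
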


\begin{proof}
We may assume that $A$ itself is commutative.
Let $v=(x,y)$ be a nondegenerate vector.
Take $x'\in A-Fx$.
Take $y'\in A$ such that $x'y=xy'$.
By the commutativity
we have
$x'(x,y)=x(x',y')$.
Also
$y'x=yx'$ and $y'y=yy'$, hence
$y'(x,y)=y(x',y')$.
Put $v'=(x',y')$.
We have 
$x'v=xv', \; y'v=yv'$.
So 
$\spa{x',y'}v=\spa{x,y}v'$.
This is two-dimensional and contained in $Av\cap Av'$.
Since $x,x'$ are independent, so are $v, v'$.
Therefore $Av\ne Av'$ by Theorem 4.1.
It follows that $Av\cap Av'$ is two-dimensional and coincides with 
$\spa{x',y'}v=\spa{x,y}v'$. 
\end{proof}

\section{Nontrivial intersection of $Av$} 

 We make here some preparations for the remaining part of Theorem B of Introduction.
Propositions 6.1--6 state elementary facts.
Propositions 6.7--9 are facts peculiar to three-dimensional nonassociative division algebras over a finite field.

Let $A$ be an algebra over $F$.
We have an operation $A\times A^2\to A^2$: $a(x,y)=(ax,ay)$.
For any $v\in A^2$ we have a subspace $Av=\{av\mid a\in A\}\subset A^2$. 
We say $v$ is {\itshape regular} if $av=0$ implies $a=0$.
In this case the map $a\mapsto av$ gives a linear isomorphism $A\to Av$.

\begin{prop}
(i) Suppose that $v,v'\in A^2$ are both regular.
Then we have an isomorphism $Av\cap Av'\cong \{(a,a')\in A^2\mid av=a'v'\}$
given by the correspondence $av\leftrightarrow (a,a')$.

(ii) 
Let $P\in GL_2(F)$.
We have a linear isomorphism $A^2\to A^2$ given by $v\mapsto vP$, where $v$ is regarded as a row vector.
Let $v,v'\in A^2$ and put $w=vP$, $w'=v'P$.
Then the above isomorphism induces an isomorphism $Av\cap Av'\cong Aw\cap Aw'$.
And we have 
$$
\{(a,a')\in A^2\mid av=a'v'\}=\{(a,a')\in A^2\mid aw=a'w'\}.
$$

(iii)
Let $Q\in GL_2(F)$.
Let $v,v'\in A^2$ and put
$$
\begin{pmatrix} u \\ u'
\end{pmatrix}
=Q\begin{pmatrix} v \\ v'
\end{pmatrix}.
$$
Then we have a linear isomorphism
$$
\{(a,a')\in A^2\mid av=a'v'\}\cong \{(b,b')\in A^2\mid bu=b'u'\}
$$
under the correspondence
$(a,-a')=(b,-b')Q$.

\end{prop}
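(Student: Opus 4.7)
The plan is to verify each of the three parts separately, since all three are formal consequences of the definitions together with invertibility of linear maps.

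For (i), I would start from the fact that regularity of $v$ means the $F$-linear map $A\to A^2$, $a\mapsto av$, is injective, hence an $F$-linear isomorphism onto its image $Av$; the same is true for $v'$. Thus each element of $Av\cap Av'$ has a unique expression as $av=a'v'$ with a well-defined pair $(a,a')\in A^2$, and these pairs cut out precisely the linear subspace $\{(a,a')\mid av=a'v'\}$. The correspondence $av\leftrightarrow(a,a')$ is manifestly $F$-linear and bijective.

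For (ii), the key observation is that the map $v\mapsto vP$ commutes with the left $A$-action, because $P$ has entries in $F$; it is $F$-linear and invertible on $A^2$. Consequently it carries $Av$ onto $A(vP)=Aw$ and $Av'$ onto $Aw'$, and therefore restricts to an isomorphism $Av\cap Av'\cong Aw\cap Aw'$. The set equality $\{(a,a')\mid av=a'v'\}=\{(a,a')\mid aw=a'w'\}$ follows by right-multiplying the defining equation $av-a'v'=0$ by $P$ (or by $P^{-1}$) to move between the two forms.

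For (iii), I would write $Q=(q_{ij})$, so $u=q_{11}v+q_{12}v'$ and $u'=q_{21}v+q_{22}v'$. For any $b,b'\in A$, a direct expansion gives
\[
bu-b'u'=(bq_{11}-b'q_{21})v-(b'q_{22}-bq_{12})v'.
\]
Setting $a=bq_{11}-b'q_{21}$ and $a'=b'q_{22}-bq_{12}$ realises the matrix identity $(a,-a')=(b,-b')Q$, and transforms $bu=b'u'$ into $av=a'v'$. Since $Q$ is invertible, the correspondence is a linear isomorphism between the two solution sets.

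All three parts amount to bookkeeping with linear maps; the only step where a slip is likely is keeping the signs straight in (iii), where the matrix $Q$ acts on the row $(b,-b')$ rather than $(b,b')$, reflecting that the defining relation is $av-a'v'=0$ rather than $av+a'v'=0$.
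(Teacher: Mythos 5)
Your proposal is correct, and it fills in exactly the routine verifications the paper leaves out (the paper's entire proof of this proposition is ``Proof will be obvious''). Your bookkeeping in (iii) checks out: with $a=bq_{11}-b'q_{21}$ and $a'=b'q_{22}-bq_{12}$ one indeed has $(a,-a')=(b,-b')Q$ and $bu-b'u'=av-a'v'$, so the sign convention matches the statement.
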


Proof will be obvious.

\begin{prop}
Suppose that $A$ is a division algebra.
Let $v,v'\in A^2$, $v=(x,y)$, $v'=(x',y')$. Suppose $y=0$, $x\ne 0$.
Then
$Av\cap Av'\ne 0$ if and only if $y'=0, x'\ne 0$.
And in this case
$Av=Av'=A\oplus 0$.
\end{prop}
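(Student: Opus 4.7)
The plan is to exploit the division algebra property in the simplest possible way: for any nonzero $x \in A$, the right multiplication $R_x\colon A \to A$, $a\mapsto ax$ is bijective, so $Ax = A$. Applied to $v=(x,0)$ with $x\ne 0$ this immediately gives $Av = Ax \oplus 0 = A\oplus 0$. The whole proposition then reduces to describing when $Av'$ meets $A\oplus 0$ nontrivially.

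For the forward direction, I would take any nonzero element of $Av\cap Av'$. Since it lies in $Av = A\oplus 0$, it has the form $(z,0)$ with $z\ne 0$, and since it lies in $Av'$ it equals $(ax', ay')$ for some $a\in A$. So $ay'=0$ and $ax'\ne 0$; the latter forces $a\ne 0$, and then the division algebra property (injectivity of $R_{y'}$ when $y'\ne 0$) forces $y'=0$, while $ax'\ne 0$ forces $x'\ne 0$. The converse is the symmetric observation: when $y'=0$ and $x'\ne 0$, the same computation as for $v$ gives $Av' = A\oplus 0 = Av$, so the intersection is all of $A\oplus 0$, in particular nonzero, and the final equality $Av=Av'=A\oplus 0$ is proved in the same step.

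There is essentially no obstacle here; this is a warm-up statement that only uses bijectivity of left and right multiplication by nonzero elements. The only thing to be careful about is to distinguish the two relevant multiplications: one needs $R_x$ bijective (to identify $Av$ with $A\oplus 0$) and $R_{y'}$ injective (to conclude $y'=0$ from $ay'=0$ with $a\ne 0$). Both follow from the single hypothesis that $A$ is a division algebra, so no further structure of $A$ (such as three-dimensionality, finiteness of $F$, or the twisted field presentation) is needed, and the proposition is stated at the right level of generality to be reused in the later propositions 6.4--6.9.
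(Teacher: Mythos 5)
Your proof is correct and follows essentially the same route as the paper: identify $Av$ with $A\oplus 0$ via bijectivity of right multiplication by $x$, then note that a nonzero element of the intersection gives $a\ne 0$ with $ax'\ne 0$ and $ay'=0$, forcing $x'\ne 0$ and $y'=0$ by the division algebra property, with the converse being the same computation applied to $v'$. No discrepancies worth noting.
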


\begin{proof}
We have 
$$
Av=\{(ax,0)\mid a\in A\}=\{(a,0)\mid a\in A\}=A\oplus 0.
$$
Suppose $Av\cap Av'\ne 0$. There exists $a\in A$ such that 
$ax'\ne 0, \; ay'=0$.
It then follows that  $x'\ne 0$ and $y'=0$.

Conversely if $y'=0$ and $x'\ne 0$, then
$Av'=A\oplus 0=Av$.

\end{proof}

\begin{prop}
Suppose that $A$ is a division algebra.
Let $v,v'\in A^2$, $v=(x,y)$, $v'=(x',y')$. Suppose $y=\lambda x$ for some $\lambda\in F$ and $x\ne 0$.
Then
$Av\cap Av'\ne 0$ if and only if $y'=\lambda x', x'\ne 0$.
In this case
$Av=Av'=\{(a,\lambda a)\mid a\in A\}$.

\end{prop}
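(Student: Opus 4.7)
My plan is to reduce the statement to a single observation: under the hypothesis, the subspace $Av$ is exactly the ``$\lambda$-diagonal'' $\{(b,\lambda b)\mid b\in A\}$, and then to read off the structure of $v'$ from any nonzero element shared with $Av'$.

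First I would establish that $Av=\{(b,\lambda b)\mid b\in A\}$. Since $A$ is a division algebra and $x\neq 0$, the right multiplication $a\mapsto ax$ is a bijection on $A$; as $a$ ranges over $A$, the element $b=ax$ ranges over $A$, and by $F$-bilinearity of the multiplication, $a(x,\lambda x)=(ax,\lambda ax)=(b,\lambda b)$. The same computation shows $v$ is regular. This observation immediately handles the ``if'' direction: if $y'=\lambda x'$ with $x'\neq 0$, the identical calculation gives $Av'=\{(b,\lambda b)\mid b\in A\}=Av$, and both coincide with the claimed diagonal subspace.

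For the ``only if'' direction, suppose $Av\cap Av'\neq 0$, and pick a nonzero element $w=bv=av'$ with $a,b\in A$. Regularity of $v$ and $w\neq 0$ force $b\neq 0$, hence also $a\neq 0$ (else $w=0$). The two coordinate equations read $bx=ax'$ and $\lambda(bx)=ay'$; substituting the first into the second and pulling $\lambda\in F$ across via $F$-bilinearity gives $a(\lambda x')=ay'$, i.e., $a(y'-\lambda x')=0$. Because $A$ is a division algebra and $a\neq 0$, left multiplication by $a$ is injective, so $y'=\lambda x'$. The equation $ax'=bx\neq 0$ then rules out $x'=0$.

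I do not expect a real obstacle here; the only point that demands care is the scalar-pull-through $\lambda(bx)=a(\lambda x')$, which is valid purely because $\lambda\in F$ and the product is $F$-bilinear (not because of any associativity in $A$). Once the two directions are in hand, the final equality $Av=Av'=\{(a,\lambda a)\mid a\in A\}$ is just the diagonal description applied to both $v$ and $v'$.
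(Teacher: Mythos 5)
Your proof is correct, but it takes a genuinely different route from the paper's. The paper proves this proposition by reduction: it applies the coordinate change $w=vP$, $w'=v'P$ with the shear $P=\begin{pmatrix}1&-\lambda\\0&1\end{pmatrix}\in GL_2(F)$, so that $w=(x,0)$ and $w'=(x',y'-\lambda x')$; by Proposition 6.1(ii) this preserves (non)triviality of the intersection, and the statement then follows from the already-proved case $\lambda=0$ (Proposition 6.2, where $Av=A\oplus 0$). You instead argue directly: you identify $Av$ with the diagonal $\{(b,\lambda b)\mid b\in A\}$ via surjectivity of $R_x$, and for the converse you take a nonzero common element $bv=av'$, pull the scalar $\lambda$ through the product by $F$-bilinearity (correctly flagged as the one delicate point, since no associativity is available), and use injectivity of $L_a$ to conclude $y'=\lambda x'$ and then $x'\ne 0$. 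Both arguments are elementary and complete; yours is self-contained and essentially inlines, for general $\lambda$, the computation the paper performs once in Proposition 6.2 and then recycles. What the paper's reduction buys is economy within its framework: the $GL_2(F)$-equivariance of Proposition 6.1 is a tool it reuses repeatedly in the same section (Propositions 6.5, 6.6, 6.9, 6.10), so phrasing this proposition as an instance of that technique keeps the section uniform, whereas your argument would stand on its own without Propositions 6.1 and 6.2.
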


\begin{proof}
Let 
$$
P=\begin{pmatrix} 1 & -\lambda \\
0 & 1 
\end{pmatrix}.
$$
Put $w=vP$, $w'=v'P$, so that
$w=(x,0)$, $w'=(x',y'-\lambda x')$.
Then
\begin{align*} 
Av\cap Av'\ne 0
\iff & Aw\cap Aw'\ne 0 \quad(\text{by Proposition 6.1(ii)})\\
\iff &y'-\lambda x'=0, x'\ne 0 \quad(\text{by Proposition 6.2}).
\end{align*}
In this case
$Aw=Aw'=A\oplus 0$.
Then
$Av=Av'=\{(a,\lambda a)\mid a\in A\}$.
\end{proof}

Recall that $v=(x,y)\in A^2$ is said to be degenerate if $x,y$ are linearly dependent over $F$. In this term the proposition is restated: 
 
\begin{cor}
Suppose that $A$ is a division algebra. Let $v,v'\in A^2$ be nonzero. Suppose that $v$ is degenerate.
Then $Av\cap Av' \ne 0$ if and only if $Av=Av'$, in which case $v'$ is also degenerate.

\end{cor}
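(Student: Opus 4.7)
The plan is to deduce the corollary as a direct packaging of Propositions 6.2 and 6.3, together with the coordinate swap provided by Proposition 6.1(ii); no new calculation is required.

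First I would write $v = (x,y)$ with $x, y$ linearly dependent over $F$ and $(x,y) \ne (0,0)$, observing that two mutually exclusive subcases arise: (a) $x \ne 0$ and $y = \lambda x$ for some $\lambda \in F$; (b) $x = 0$ and $y \ne 0$. In case (a), Proposition 6.3 applies verbatim to the pair $v, v'$: the condition $Av \cap Av' \ne 0$ is equivalent to $y' = \lambda x'$ with $x' \ne 0$, and the proposition further asserts that in this situation $Av = Av' = \{(a, \lambda a) \mid a \in A\}$. In case (b), I would apply Proposition 6.1(ii) with the swap matrix $P = \begin{pmatrix} 0 & 1 \\ 1 & 0 \end{pmatrix}$: setting $w = vP = (y, 0)$ and $w' = v'P = (y', x')$, we obtain $Av \cap Av' \cong Aw \cap Aw'$, whereupon Proposition 6.2 (applied to $w, w'$, which is legitimate because $y \ne 0$ and $w' \ne 0$) gives $x' = 0$, $y' \ne 0$, together with $Aw = Aw' = A \oplus 0$. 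Transporting back through $P^{-1}$ yields $Av = Av' = 0 \oplus A$.

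The converse direction and the auxiliary claim that $v'$ must itself be degenerate are then immediate from the explicit descriptions above: if $Av = Av'$, then $Av \cap Av' = Av \ne 0$ because $v$ is nonzero and $A$ is a division algebra; and every element of $\{(a, \lambda a) \mid a \in A\}$ or of $0 \oplus A$ is degenerate, so $v' \in Av = Av'$ is degenerate.

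The only real point of care is that Proposition 6.3 is stated under the hypothesis $x \ne 0$, which is why the case $x = 0$ must be dispatched via the coordinate swap before invoking an analogue. Beyond this minor bookkeeping, the corollary contains no substantive content beyond its two precursors, so I do not anticipate any obstacle.
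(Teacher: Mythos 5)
Your proof is correct and follows essentially the route the paper intends: the paper gives Corollary 6.4 no separate proof at all, presenting it as a mere restatement of Proposition 6.3, and your case split --- Proposition 6.3 when $x\ne 0$, the coordinate swap of Proposition 6.1(ii) followed by Proposition 6.2 when $x=0$, $y\ne 0$ --- is exactly the bookkeeping that this restatement suppresses.

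One caution about your last sentence: you conclude that $v'$ is degenerate from ``$v'\in Av=Av'$'', but membership $v'\in Av'$ is not automatic here, because the paper's division algebras are pre-semifields with no identity element assumed (so there need be no $a$ with $ax'=x'$ and $ay'=y'$ simultaneously). The step is harmless only because it is redundant: in both of your cases the forward equivalence already produces $y'=\lambda x'$ with $x'\ne 0$ (respectively $x'=0$, $y'\ne 0$), and that is precisely the statement that $v'$ is degenerate; cite that instead of appealing to $v'\in Av'$.
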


\begin{prop}
Suppose that $A$ is a division algebra.
Let $v,v'\in A^2$, $v=(x,y)$, $v'=(x',y')$. Suppose $x\ne 0$, $x'\ne 0$ and $x'=\lambda x$ for some $\lambda\in F$.
Then
$Av\cap Av'\ne 0$ if and only if  $y'=\lambda y$.
In this case 
$v'=\lambda v$ and  $Av=Av'$.

\end{prop}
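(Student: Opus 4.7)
The plan is to apply the row-reduction of Proposition 6.1(iii) to replace $v'$ by $v' - \lambda v$, which has first coordinate zero, and then read off the condition $y' = \lambda y$ from a short division-algebra argument on the first coordinate.

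First I would observe that both $v$ and $v'$ are regular in $A^2$: if $av = 0$ then $ax = 0$ with $x \ne 0$, forcing $a = 0$ since $A$ is a division algebra, and similarly for $v'$. Proposition 6.1(i) then identifies $Av \cap Av'$ with $\{(a,a') \in A^2 : av = a'v'\}$, so it suffices to determine when this solution space is nonzero.

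Next I would apply Proposition 6.1(iii) with $Q = \begin{pmatrix} 1 & 0 \\ -\lambda & 1 \end{pmatrix}$, transforming $(v, v')$ into $(u, u') = (v,\, v' - \lambda v) = \bigl((x,y),\, (0,\, y' - \lambda y)\bigr)$ and giving a linear isomorphism onto $\{(b,b') \in A^2 : bu = b'u'\}$. The system $bu = b'u'$ reads $bx = 0$ together with $by = b'(y' - \lambda y)$. Since $x \ne 0$ and $A$ is a division algebra, the first equation forces $b = 0$, and the remaining condition is $b'(y' - \lambda y) = 0$. This admits a nonzero $b'$ precisely when $y' - \lambda y = 0$, i.e.\ $y' = \lambda y$.

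For the closing assertion, if $y' = \lambda y$ then $v' = \lambda v$; as $x' \ne 0$ forces $\lambda \ne 0$, this gives $Av' = Av$ and the intersection is all of $Av$, which is nonzero. I expect no real obstacle here: the statement is a mechanical consequence of Proposition 6.1 once $Q$ is chosen to annihilate the first coordinate of $v'$, after which the division-algebra property immediately isolates the equation $y' = \lambda y$.
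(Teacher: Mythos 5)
Your proof is correct and takes essentially the same route as the paper's: both apply Proposition 6.1(i) and then 6.1(iii) with the same matrix $Q=\begin{pmatrix} 1 & 0 \\ -\lambda & 1 \end{pmatrix}$ to replace $v'$ by $v'-\lambda v=(0,\,y'-\lambda y)$. The only difference is that where the paper invokes Proposition 6.2 (with the two coordinates interchanged) to handle this degenerate vector, you solve the transformed system $bx=0$, $by=b'(y'-\lambda y)$ directly from the division-algebra property, which is just an inlined version of the same argument.
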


\begin{proof}
By Proposition 6.1(i) we have
$$
Av\cap Av'\ne 0 
\iff \{(a,a')\mid av=a'v'\}\ne 0 ,
$$
and when $v'-\lambda v\ne 0$ we have
$$
Av\cap A(v'-\lambda v)\ne 0\iff
\{(a,a')\mid av=a'(v'-\lambda v)\}\ne 0.
$$
We have
$$
\begin{pmatrix} v \\ v'-\lambda v \end{pmatrix}
=Q\begin{pmatrix} v \\ v' \end{pmatrix}
\quad\text{with}\quad
Q=\begin{pmatrix} 1 & 0 \\ -\lambda & 1 \end{pmatrix}\in GL_2(F).
$$
By Proposition 6.1(iii) we have an isomorphism
$$
\{(a,a')\mid av=a'v'\}\cong
 \{(a,a')\mid av=a'(v'-\lambda v)\}.
$$
Therefore, when $v'-\lambda v\ne 0$, we have
$$
Av\cap Av'\ne 0
\iff 
Av\cap A(v'-\lambda v)\ne 0.
$$
When $y'-\lambda y\ne 0$, by Proposition 6.2 applied to $v'-\lambda v=(0, y'-\lambda y)$
and $v=(x,y)$,  we have
$Av\cap A(v'-\lambda v)\ne 0$ if and only if $x=0, y\ne 0$.
It follows that
if $Av\cap Av'\ne 0$ and $y'\ne \lambda y$ then $x=0$.
Since we are assuming $x\ne 0$, if follows  that
if $Av\cap Av'\ne 0$ then $y'=\lambda y$.
When $y'=\lambda y$, we have $v'=\lambda v$, and as $\lambda\ne 0$, we have
$Av'=Av$.

\end{proof}

\begin{prop} 
Suppose that $A$ is a division algebra. Let $v,v'\in A^2$, $v=(x,y)$, $v'=(x',y')$. 
Suppose that $v$ and $v'$ are nondegenerate. Suppose further that 
$$
(\nu,\nu')\begin{pmatrix} x & y \\ x' & y' \end{pmatrix}
\begin{pmatrix} \lambda \\ \mu \end{pmatrix}
=(0)
$$
with $\lambda, \mu, \nu, \nu'\in F$, $(\lambda,\mu)\ne 0$, $(\nu, \nu')\ne 0$.
Then
$Av\cap Av'\ne 0$ if and only if $\nu v+\nu'v'=0$.
In this case $Av=Av'$.

\end{prop}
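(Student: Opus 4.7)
The plan is to reduce the general hypothesis to the concrete base case of Proposition 6.2 by chaining the change-of-variable isomorphisms from Proposition 6.1. Since $(\lambda,\mu)\ne 0$ and $(\nu,\nu')\ne 0$, I can enlarge $(\lambda,\mu)^T$ to a matrix $P\in GL_2(F)$ having it as first column, and $(\nu,\nu')$ to a matrix $Q\in GL_2(F)$ having it as first row. Setting $w=vP$ and $w'=v'P$ makes the first coordinates of $w$ and $w'$ equal to $\lambda x+\mu y$ and $\lambda x'+\mu y'$, and the hypothesis rewrites as $\nu(\lambda x+\mu y)+\nu'(\lambda x'+\mu y')=0$. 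Next I would form $(u,u')^T=Q(w,w')^T$; by design the first coordinate of $u=\nu w+\nu'w'$ vanishes.

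Proposition 6.1(ii) gives $Av\cap Av'\cong Aw\cap Aw'$ and identifies the two hom-sets $\{(a,a'):av=a'v'\}$ and $\{(a,a'):aw=a'w'\}$; Proposition 6.1(iii) then transfers the latter to $\{(b,b'):bu=b'u'\}$. Because $A$ is a division algebra, any nonzero element of $A^2$ is regular, so Proposition 6.1(i) converts all these hom-sets into the corresponding intersections whenever both arguments are nonzero. It remains only to analyse when $u$ and $u'$ can vanish.

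Now the cases split cleanly. If $\nu v+\nu'v'=0$, then $v$ and $v'$ are $F$-proportional, so $Av=Av'$ and both conclusions hold at once. Suppose instead $\nu v+\nu'v'\ne 0$; then $v$ and $v'$ are $F$-linearly independent, since any scalar relation $v'=cv$ combined with the hypothesis forces $\nu v+\nu'v'=0$ (a short direct check using $(\lambda,\mu)\ne 0$ and the nondegeneracy of $v$). Writing $(q,q')$ for the second row of $Q$, one has $u=(\nu v+\nu'v')P\ne 0$ and $u'=(qv+q'v')P\ne 0$, and the first coordinate of $u$ is zero, so $u=(0,s)$ with $s\ne 0$. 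A final coordinate swap via a permutation matrix fits $(u,u')$ to Proposition 6.2, which gives $Au\cap Au'\ne 0$ if and only if the first coordinate $u'_1=q(\lambda x+\mu y)+q'(\lambda x'+\mu y')$ of $u'$ also vanishes.

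The remaining step, the essential one, is to rule out $u'_1=0$ in this second case. Because $(\nu,\nu')$ and $(q,q')$ are $F$-linearly independent (they are the rows of the invertible $Q$), the simultaneous vanishing $\nu(\lambda x+\mu y)+\nu'(\lambda x'+\mu y')=0=q(\lambda x+\mu y)+q'(\lambda x'+\mu y')$ would force $\lambda x+\mu y=0$ and $\lambda x'+\mu y'=0$ separately; with $(\lambda,\mu)\ne 0$ this contradicts the nondegeneracy of $v$ and of $v'$. Hence $Au\cap Au'=0$ and therefore $Av\cap Av'=0$. The only mild friction will be keeping track of the two successive changes of variable, the right-multiplication by $P$ and the left action of $Q$; no individual step requires a substantial new idea.
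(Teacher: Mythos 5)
Your proof is correct and follows essentially the same route as the paper: both arguments use the change-of-variable isomorphisms of Proposition 6.1 to move the pair $(v,v')$ into a position where an elementary degenerate-coordinate base case decides the intersection. The only difference is bookkeeping: the paper normalizes $\mu=1$, $\nu'=-1$, applies a single column shear, and cites Proposition 6.5 (whose own proof is a row operation plus Proposition 6.2), whereas you complete $(\lambda,\mu)$ and $(\nu,\nu')$ to invertible matrices $P$, $Q$ and inline that reduction, going directly to Proposition 6.2.
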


\begin{proof}
We may assume $\mu=1$, $\nu'=-1$.
The equation
$$
(\nu,\nu')\begin{pmatrix} x & y \\ x' & y' \end{pmatrix}
\begin{pmatrix} \lambda \\ \mu \end{pmatrix}
=(0)
$$
then says that $\lambda x'+y'=\nu (\lambda x+y)$.
And we have $\lambda x+y\ne 0$, $\lambda x'+y'\ne 0$ by the linear independence. 
Put
$w=(x,\lambda x+y)$, $w'=(x',\lambda x'+y')$.
Then, by Propositions 6.1(ii) and 6.5,  we have
\begin{align*}
Av\cap Av'\ne 0 &\iff Aw\cap Aw'\ne 0\\
&\iff x'=\nu x 
\iff w'=\nu w
\iff v'=\nu v.
\end{align*}
In this case $Av=Av'$. 

\end{proof}

\begin{prop}
Assume  $F$ is  finite.
Let $A$ be a three-dimensional nonassociative division algebra over $F$.
Let $v, v'\in A^2$, 
$v=(x,y)$, $v'=(x', y')$.
Suppose that the $F$-span $\spa{x,y}=Fx+Fy$ is two-dimensional and so are the spans
$\spa{x', y'}$, $\spa{x, x'}$, $\spa{y,y'}$.
If $\spa{x,y}=\spa{x', y'}$, then $Av\cap Av'=0$.
\end{prop}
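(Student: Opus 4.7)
My plan is to show that the only $(a,a')\in A^2$ satisfying $av=a'v'$ is $(0,0)$; by Proposition~6.1(i) this gives $Av\cap Av'=0$. Since $\langle x,y\rangle=\langle x',y'\rangle$, I write $x'=\alpha x+\beta y$ and $y'=\gamma x+\delta y$ with $\alpha,\beta,\gamma,\delta\in F$; the $2$-dimensionality of $\langle x,x'\rangle$ and $\langle y,y'\rangle$ forces $\beta,\gamma\neq 0$. From $ax=a'x'$ and $ay=a'y'$ I obtain $R_x(a-\alpha a')=\beta R_y(a')$ and $R_y(a-\delta a')=\gamma R_x(a')$, where $R_x,R_y$ are right multiplications on $A$, which are bijections because $A$ is a division algebra. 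Setting $T:=R_x^{-1}R_y\in\operatorname{End}_F(A)$ turns these into $a-\alpha a'=\beta T(a')$ and $a-\delta a'=\gamma T^{-1}(a')$; subtracting and applying $T$ yields
\[
q(T)(a')=0,\qquad q(t):=\beta t^2-(\delta-\alpha)t-\gamma\in F[t].
\]

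Next I identify the characteristic polynomial of $T$. By Theorem~1.1, $A$ is isotopic to the twisted field $(K,\mu)$ on a cubic extension $K/F$ via a triple $(f,g,h):A\to K$ with $f(a)g(b)=h(ab)$. This identity gives $h\circ R_b^A=R_{g(b)}^{(K,\mu)}\circ f$ for every $b$, whence $T=f^{-1}\circ\bigl(R_{g(x)}^{-1}R_{g(y)}\bigr)\circ f$. Base-extending to $K$ and applying the isomorphism $\omega\colon K\otimes_F K\to K^3$ of Proposition~1.2, the element $g(b)\otimes 1$ corresponds to $(g(b),g(b)^\sigma,g(b)^{\sigma^2})\in V$, so $T\otimes\operatorname{id}_K$ is conjugate to a right-multiplication operator in the split Albert algebra $(K^3,\nu)$. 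Proposition~2.1 then gives its characteristic polynomial as
\[
\prod_{i=0}^{2}\bigl(t-(g(y)/g(x))^{\sigma^i}\bigr).
\]
Since $x,y$ are $F$-linearly independent in $A$ and $g$ is an $F$-linear isomorphism, $g(y)/g(x)\in K\setminus F$; because $[K:F]=3$ is prime, $g(y)/g(x)$ generates $K$ over $F$. Hence the above product is the minimal polynomial of $g(y)/g(x)$ over $F$, which is an irreducible cubic $p_T\in F[t]$, and this is the characteristic polynomial of $T$.

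Finally, $q$ has degree $2$ and $p_T$ is irreducible of degree $3$, so $\gcd(q,p_T)=1$ in $F[t]$. Bezout combined with the Cayley--Hamilton identity $p_T(T)=0$ yields $u\in F[t]$ with $u(T)q(T)=I$; thus $q(T)$ is an automorphism of $A$, forcing $a'=0$. Then $R_x(a-\alpha a')=\beta R_y(a')=0$ and the injectivity of $R_x$ give $a=0$, so $(a,a')=(0,0)$. The principal obstacle is the middle step: one must carefully propagate the isotopy through the scalar extension and through $\omega$ to verify that $T\otimes\operatorname{id}_K$ does become the claimed split-Albert operator, and then use $F$-linear independence of $x$ and $y$ to prevent $g(y)/g(x)$ from lying in $F$, which is precisely what makes $p_T$ an irreducible cubic.
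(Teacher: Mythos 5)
Your proof is correct, and it rests on the same two pillars as the paper's own proof: the identification $Av\cap Av'\cong\{(a,a')\mid av=a'v'\}$ from Proposition 6.1(i), and the computation of the characteristic polynomial of a right-quotient operator via the isotopy to a twisted field, base extension, and Proposition 2.1. The differences are in the bookkeeping, and they buy you a cleaner endgame. The paper takes $Q=R_{x'}^{-1}R_x$ and writes $y=ex+fx'$, $y'=gx+hx'$, reducing the intersection to $\Ker(gQ^2+(h-e)Q-fI)$; since that polynomial can be identically zero, the paper must split into two cases (coefficients not all zero, whence an eigenvalue of $\tilde Q$ has degree less than $3$ over $F$, hence lies in $F$, contradicting independence of $x,x'$; and all coefficients zero, contradicting independence of $x,y$). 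Your choice $T=R_x^{-1}R_y$ together with the parametrization $x'=\alpha x+\beta y$, $y'=\gamma x+\delta y$ lets the span hypotheses force $\beta,\gamma\neq 0$ at the outset, so $q(t)=\beta t^2-(\delta-\alpha)t-\gamma$ is a genuine quadratic, and independence of $x,y$ makes the characteristic polynomial of $T$ an irreducible cubic; coprimality plus Cayley--Hamilton then gives invertibility of $q(T)$ with no case analysis and no contradiction argument. One notational slip worth fixing: with the paper's convention $\omega(x\otimes y)=(xy,xy^{\sigma},xy^{\sigma^2})$ and scalars in the first tensor factor, the algebra element $g(b)$ embeds as $1\otimes g(b)$, not $g(b)\otimes 1$, and it is $1\otimes g(b)$ that maps to $(g(b),g(b)^{\sigma},g(b)^{\sigma^2})$; this does not affect the argument.
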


\begin{proof}
Since the $F$-spans $\spa{x,y}, \spa{x',y'}, \spa{x,x'}, \spa{y,y'}$ are two-dimensional, we have
$$
\spa{x,y}=\spa{x',y'}
\iff 
\dim \spa{x,y,x',y'}=2
\iff 
\spa{x,x'}=\spa{y,y'}.
$$
Assume  $\spa{x,y}=\spa{x',y'}$. Then
$\spa{x,x'}=\spa{y,y'}$, so we write
$$
y=ex+fx', \quad
y'=gx+hx'
$$
with $e,f,g,h\in F$.
We have the isomorphism  of Proposition 6.1(i):
$$
Av \cap Av'\cong \{(a,a')\mid ax=a'x', ay=a'y'\}.
$$
The right multiplication by $x$ on $A$ is denoted by $R_x$.
The equation
$ax=a'x'$ is expressed as $R_{x'}^{-1}R_x(a)=a'$.
And the equation $ay=a'y'$ says $a(ex+fx')=a'(gx+hx')$, or 
$(ea-ga')x=(ha'-fa)x'$, which is expressed as 
$R_{x'}^{-1}R_x(ea-ga')=ha'-fa$.
Putting $Q=R_{x'}^{-1}R_x$, we have
$$
\left\{\begin{aligned}
ax&=a'x',\\
ay&=a'y'
\end{aligned}\right.
\iff
\left\{\begin{aligned}
&Q(a)=a',\\
&Q(ea-ga')=-fa+ha'.
\end{aligned}\right.
$$
By elimination of $a'$ the last equation becomes
$$
Q(ea-gQ(a))=-fa+hQ(a),
$$
that is,
$$
gQ^2(a)+(h-e)Q(a)-fa=0.
$$
Therefore we have an isomorphism
$$
\{(a,a')\mid ax=a'x', ay=a'y'\}
\cong \Ker(gQ^2+(h-e)Q-fI)
$$
given by $(a, Q(a))\leftrightarrow a$.

Let $K/F$ be a cubic extension with generating automorphism $\sigma$.
By Section 1 $A$ is isotopic to the twisted field $(K, \mu)$ associated with an element $c\in K^{\times}$.
Put $\tilde A=K\otimes A$.  
Let $\phi=\phi_{d_0,d_1,d_2}\colon U\times V\to W$ be the split Albert algebra  as defined in Section 2 with base field $K$ and $d_i=-c^{\sigma^i}$:
$$
\phi(\alpha_i,\beta_i)=0,\;
\phi(\alpha_i,\beta_{i+1})=\gamma_{i-1},\;
\phi(\alpha_i,\beta_{i-1})=d_{i+1}\gamma_{i+1}.
$$
By Sections 1 and 2 we have an isomorphism $(f,g,h)$ from the multiplication map $\tilde A\times \tilde A\to \tilde A$ to $\phi$.
By Section 2 the semi-linear automorphism $\sigma\otimes 1$ of $\tilde A$ is translated to a semi-linear automorphism $\lambda$ of $\phi$ such that
$$
\alpha_i\mapsto \alpha_{i+1},\quad \beta_i\mapsto \beta_{i+1},\quad \gamma_i\mapsto \gamma_{i+1}.
$$
Write 
$g(x)=\sum_i x_i \beta_i$, 
$g(x')=\sum_i x'_i \beta_i$
with $x_i, x'_i\in K$.
Then, as $x,x'\in A\subset \tilde A$ are invariant under $\sigma\otimes 1$,
$x_i^{\sigma}=x_{i+1}$,
${x'_i}^{\sigma}=x'_{i+1}$.

Let $\tilde Q=K\otimes Q$. This is a $K$-linear map $\tilde A\to \tilde A$.
The isomorphism $(f,g,h)$ transforms $\tilde Q$ into $R_{g(x')}^{-1}R_{g(x)}\colon U\to U$.
By Proposition 2.1 it follows that the characteristic roots of $\tilde Q$ are
$x_i/x_i'$. 
Suppose $\Ker(gQ^2+(h-e)Q-fI)\ne 0$.
Then $\Ker(g\tilde Q^2+(h-e)\tilde Q-fI)\ne 0$.
Then
$$
g(x_i/x'_i)^2+(h-e)(x_i/x'_i)-f=0
$$
for some $i$.
If the coefficients $g,h-e,f$ are not all zero, then $x_i/x'_i$ has degree less than 3 over $F$.
But $[K:F]=3$, so we must have $x_i/x'_i\in F$.
Then $x_i/x'_i$ are all equal.
Therefore $x=\lambda x'$ with $\lambda\in F$.
This contradicts that $x, x'$ are independent.

So $g,h-e,f$ are all zero.
Then
$y=ex$, 
$y'=ex'$.
This contradicts that $x,y$ are independent.

Consequently
$\Ker (gQ^2+(h-e)Q-fI)=0$,
hence
$\{(a,a')\mid av=a'v'\}=0$. 
Thus $Av\cap Av'=0$.
\end{proof}

\begin{prop}
Assume  $F$ is  finite.
Let $A$ be a three-dimensional nonassociative division algebra over $F$.
Let $v=(x,y), v'=(x',y')\in A^2$.
Suppose that $Av\cap Av'\ne 0$ and $Av\ne Av'$.
Then $\spa{x,y}, \spa{x',y'}, \spa{x,x'}, \spa{y,y'}$ are two-dimensional and
$\spa{x,y}\ne \spa{x',y'}$.

\end{prop}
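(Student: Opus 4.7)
The plan is to rule out each possible degeneracy in turn, using Corollary 6.4, Proposition 6.5, and Proposition 6.7, each time deriving a contradiction with one of the two assumptions $Av\cap Av'\ne 0$ or $Av\ne Av'$.

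First I would show that $v$ and $v'$ are both nondegenerate. If $v$ were degenerate, Corollary 6.4 would give $Av=Av'$, contradicting the hypothesis. By the symmetry $v\leftrightarrow v'$ the same holds for $v'$. So $\spa{x,y}$ and $\spa{x',y'}$ are both two-dimensional; in particular $x,y,x',y'$ are all nonzero.

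Next I would handle $\spa{x,x'}$. Suppose it is one-dimensional, so $x'=\lambda x$ with $\lambda\in F^{\times}$. Then Proposition 6.5 applied to $v,v'$ says that $Av\cap Av'\ne 0$ forces $y'=\lambda y$, hence $v'=\lambda v$ and $Av=Av'$, again a contradiction. So $\spa{x,x'}$ is two-dimensional. For $\spa{y,y'}$ I would reduce to the previous case by the coordinate swap: take
$$
P=\begin{pmatrix}0 & 1\\ 1 & 0\end{pmatrix}\in GL_2(F),
$$
put $w=vP=(y,x)$ and $w'=v'P=(y',x')$. By Proposition 6.1(ii) we still have $Aw\cap Aw'\ne 0$ and $Aw\ne Aw'$, and the argument just given (applied to $w,w'$ with first components $y,y'$) yields $\dim\spa{y,y'}=2$.

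Finally, since all four spans are two-dimensional, Proposition 6.7 is applicable to $v,v'$. If $\spa{x,y}=\spa{x',y'}$, that proposition gives $Av\cap Av'=0$, contradicting the hypothesis. Therefore $\spa{x,y}\ne \spa{x',y'}$, completing the proof. There is no real obstacle here: the whole statement is a bookkeeping consequence of Propositions 6.1, 6.5, and 6.7 together with Corollary 6.4, and the only mild care needed is to dispatch $\spa{y,y'}$ by the coordinate-swap trick rather than by a direct argument.
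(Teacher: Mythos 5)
Your proof is correct and is essentially the paper's own argument: the paper likewise rules out each degeneracy in turn via Proposition 6.3 (equivalently Corollary 6.4), Proposition 6.5, and Proposition 6.7, disposing of $\spa{y,y'}$ with the remark ``similar for $y,y'$'', which your coordinate-swap via Proposition 6.1(ii) merely makes explicit.
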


\begin{proof}
Proposition 6.3 asserts that if $x,y$ are dependent, then either $Av\cap Av'= 0$ or $Av=Av'$.
So $x,y$ must be independent.
Similarly $x',y'$ are independent.
In particular $x, x'$ are nonzero.

Proposition 6.5 asserts that if $x,x'$ are dependent, then either $Av\cap Av'= 0$ or $Av=Av'$.
So $x,x'$ must be independent.
Similar for $y, y'$.

Finally, Proposition 6.7 asserts that if $\spa{x,y}=\spa{x',y'}$, then $Av\cap Av'=0$.
So $\spa{x,y}\ne \spa{x',y'}$.

\end{proof}

\begin{prop}
Assume $F$ is finite.
Let $A$ be a three-dimensional nonassociative division  algebra over $F$.
Let $v, v'\in A^2$, 
$v=(x,y)$, $v'=(x', y')$.
Suppose that the $F$-spans
$\spa{x,y}$, $\spa{x',y'}$, $\spa{x,x'}$, $\spa{y,y'}$ are all two-dimensional.
Let $F_1$ be an algebraic  extension of $F$ and $A_1=F_1\otimes A$. Suppose 
$$
(\nu,\nu')\begin{pmatrix} x & y \\ x' & y' \end{pmatrix}
\begin{pmatrix} \lambda \\ \mu \end{pmatrix}
=(0)
$$
in $A_1$ 
with $\lambda, \mu, \nu, \nu'\in F_1$, $(\lambda,\mu)\ne 0$, $(\nu, \nu')\ne 0$.
Then
$Av\cap Av'=0$.

\end{prop}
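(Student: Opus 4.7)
The plan is a case analysis on $d := \dim_F \spa{x, y, x', y'}$, which is at most $3$ since $\dim_F A = 3$. First I would observe that $\nu \ne 0$ and $\nu' \ne 0$: if $\nu = 0$, then $\nu'(\lambda x'+\mu y') = 0$ in $A_1$, and since $\nu' \ne 0$ this gives $\lambda x' + \mu y' = 0$, contradicting the $F_1$-independence of $x', y'$ in $A_1$ (which follows from their $F$-independence in $A$). If $d = 2$, then both $\spa{x, y}$ and $\spa{x', y'}$ (each two-dimensional by hypothesis) coincide with $\spa{x, y, x', y'}$, so $\spa{x,y} = \spa{x', y'}$, and Proposition 6.7 directly yields $Av \cap Av' = 0$.

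The substantive case is $d = 3$. Here the kernel of the $F$-linear evaluation map $F^4 \to A$, $(a,b,c,e) \mapsto ax+by+cx'+ey'$, is one-dimensional; pick a generator $(a_0, b_0, c_0, e_0) \in F^4$. Since extension of scalars to $F_1$ keeps the kernel one-dimensional with the same generator, the nonzero vector $(\lambda\nu, \mu\nu, \lambda\nu', \mu\nu') \in F_1^4$ must equal $\theta(a_0, b_0, c_0, e_0)$ for some $\theta \in F_1^{\times}$. Comparing the first two coordinates and using $\nu \ne 0$, the identity $(a_0, b_0) = (\theta^{-1}\nu)(\lambda,\mu)$ exhibits $(\lambda, \mu)$ as $F_1$-proportional to the nonzero $F$-pair $(a_0, b_0)$. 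Since the defining equation is invariant under rescaling $(\lambda, \mu)$ by a unit of $F_1$, one may assume $(\lambda, \mu) = (a_0, b_0) \in F^2$. Setting $P := a_0 x + b_0 y$ and $Q := a_0 x' + b_0 y'$, both lie in $A$ and are nonzero (by $F$-independence of $\{x, y\}$ and of $\{x', y'\}$), and the hypothesis becomes $\nu P + \nu' Q = 0$ in $A_1$.

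The descent step is the crux: two nonzero elements of $A$ that are $F_1$-linearly dependent in $A_1$ are automatically $F$-linearly dependent in $A$ (read off coordinates in any $F$-basis of $A$). Thus $P$ and $Q$ are $F$-proportional, and after rescaling $(\nu, \nu')$ one may assume $(\nu, \nu') \in F^2$ with $(\nu, \nu') \ne 0$. At this point all four coefficients lie in $F$ and Proposition 6.6 applies: $Av \cap Av' \ne 0$ would force $\nu v + \nu' v' = 0$, making $x, x'$ $F$-proportional and contradicting the two-dimensionality of $\spa{x, x'}$. Hence $Av \cap Av' = 0$, as required.
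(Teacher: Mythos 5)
Your proof is correct, and it reaches the paper's two pillars (Propositions 6.6 and 6.7) by a genuinely different descent mechanism. The paper normalizes $\lambda=\nu=1$, reduces to a finite subextension $F_1/F$, and applies Galois descent: for each automorphism $\sigma$ of $F_1/F$ it subtracts the $\sigma$-transformed relation from the original; when $y,x',y'$ are $F$-independent this forces $\mu,\nu'$ to be Galois-invariant, hence in $F$, and Proposition 6.6 finishes, while when $y,x',y'$ are dependent it shows $\spa{x,y}=\spa{x',y'}$ and invokes Proposition 6.7. You instead split on $d=\dim_F\spa{x,y,x',y'}$ (under the given relation the two case splits coincide, since $y\in\spa{x',y'}$ forces $x\in\spa{x',y'}$ as well), and in the case $d=3$ you exploit that the kernel of the evaluation map $F^4\to A$ is one-dimensional and that kernels commute with extension of scalars, so the coefficient vector $(\lambda\nu,\mu\nu,\lambda\nu',\mu\nu')$ must be an $F_1$-multiple of an $F$-rational generator; combined with the coordinate-wise observation that two nonzero elements of $A$ which are $F_1$-proportional in $A_1$ are already $F$-proportional, this lets you rescale all four coefficients into $F$, after which Proposition 6.6 applies exactly as in the paper. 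What each approach buys: the paper's argument is short because every finite extension of a finite field is Galois, but it genuinely uses that fact; your kernel argument is pure linear algebra, needs no Galois theory and no reduction to a finite subextension, and would work verbatim over an arbitrary base field --- the finiteness of $F$ enters only through the cited Propositions 6.6 and 6.7 themselves.
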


\begin{proof}
Replacing $F_1$ by the subfield $F(\lambda,\mu,\nu,\nu')$, we may assume that $F_1/F$ is a finite extension.
By symmetry we may also assume that $\lambda=1$, $\nu=1$.
Then the equation in the hypothesis becomes
\begin{align*}
x+\mu y+\nu' x'+\nu' \mu y'=0. \tag{1}
\end{align*}
Let $\sigma$ be a field automorphism of $F_1$ over $F$.
Apply $\sigma_*=\sigma\otimes 1$ to (1).
\begin{align*}
x+\mu^{\sigma} y+{\nu'}^{\sigma} x'+{\nu'}^{\sigma}\mu^{\sigma}y'=0.\tag 2
\end{align*}
Subtract (1) from (2).
$$
(\mu^{\sigma}-\mu)y+({\nu'}^{\sigma}-\nu')x'+
({\nu'}^{\sigma}\mu^{\sigma}-\nu'\mu)y'=0.
$$
First consider the case where $y,x',y'$ are independent over $F$.
Then 
$\mu^{\sigma}-\mu=0$, ${\nu'}^{\sigma}-\nu'=0$.
Thus $\mu, \nu'$ are invariant under the Galois group of $F_1/F$, so $\mu, \nu'\in F$.
Then by Proposition 6.6 we have 
$Av\cap Av'= 0$ or $v+\nu' v'=0$. 
But the latter would imply that $x, x'$ are dependent.
We must have
$Av\cap Av'=0$.

Next consider the case where $y,x',y'$ are dependent over $F$.
Then $y\in \spa{x',y'}$. By (1) we have $x\in \spa{x',y'}_{F_1}$, hence $x\in \spa{x',y'}_F$.
Then
$\spa{x,y}=\spa{x',y'}$.
By Proposition 6.7 we have $Av\cap Av'=0$.

\end{proof}

\begin{prop}
Assume that $F$ is finite.
Let $A$ be a three-dimensional nonassociative division algebra over $F$.
Let $v, v'\in A^2$, 
$v=(x,y)$, $v'=(x', y')$.
Suppose that the $F$-spans
$\spa{x,y}$, $\spa{x',y'}$, $\spa{x,x'}$, $\spa{y,y'}$ are all two-dimensional.
Let $F_1/F$ be an algebraic extension and $A_1=F_1\otimes A$.
Let $P, Q\in GL_2(F_1)$ and
$$
Q\begin{pmatrix} x & y \\ x' & y' \end{pmatrix} P=
\begin{pmatrix} X & Y \\ X' & Y' \end{pmatrix} 
$$
in $A_1$. Suppose $Av\cap Av'\ne 0$. 
Then
the $F_1$-spans
$\spa{X,Y}_{F_1}$, $\spa{X',Y'}_{F_1}$, $\spa{X,X'}_{F_1}$, 
$\spa{Y, Y'}_{F_1}$ are two-dimensional.

\end{prop}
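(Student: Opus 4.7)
The plan is to argue by contradiction via Proposition 6.9. Write $M=\begin{pmatrix} x & y \\ x' & y' \end{pmatrix}$. If one of the four $F_1$-spans fails to be two-dimensional, I will extract from the failure a relation
$$(\nu,\nu')\,M\begin{pmatrix}\lambda\\ \mu\end{pmatrix}=0$$
with $(\nu,\nu')\ne 0$ and $(\lambda,\mu)\ne 0$ in $F_1$; Proposition 6.9 then forces $Av\cap Av'=0$, contradicting the hypothesis. Note that the four $F$-independence assumptions automatically pass up to $F_1$-independence in $A_1$, since extension of scalars preserves linear independence.

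To fix notation, write the rows of $QM$ as $(x_1,y_1)=q_{11}(x,y)+q_{12}(x',y')$ and $(x_2,y_2)=q_{21}(x,y)+q_{22}(x',y')$. Since $P$ is invertible, the rows of $QMP$ are $(X,Y)=(x_1,y_1)P$ and $(X',Y')=(x_2,y_2)P$, so $\spa{X,Y}_{F_1}=\spa{x_1,y_1}_{F_1}$ and $\spa{X',Y'}_{F_1}=\spa{x_2,y_2}_{F_1}$. The first column of $QMP$ is $(X,X')^T$ with $X=x_1p_{11}+y_1p_{21}$, $X'=x_2p_{11}+y_2p_{21}$; the second column is analogous using $(p_{12},p_{22})^T$.

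Now I treat the four possible failures. If $\spa{X,Y}_{F_1}$ is not two-dimensional, then $x_1,y_1$ are $F_1$-dependent; since $x,x'$ are $F_1$-independent and $(q_{11},q_{12})\ne 0$, neither $x_1$ nor $y_1$ vanishes, so there exist nonzero $\alpha,\beta\in F_1$ with $\alpha x_1+\beta y_1=0$. Expanding yields
$$\alpha q_{11}\,x+\beta q_{11}\,y+\alpha q_{12}\,x'+\beta q_{12}\,y'=0,$$
matching Proposition 6.9 with $(\nu,\nu')=(q_{11},q_{12})$ and $(\lambda,\mu)=(\alpha,\beta)$. The case $\spa{X',Y'}_{F_1}$ is identical with the second row of $Q$. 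If $\spa{X,X'}_{F_1}$ is not two-dimensional, pick $(\alpha,\beta)\ne 0$ in $F_1^2$ with $\alpha X+\beta X'=0$; setting $(\nu,\nu')=(\alpha,\beta)Q\ne 0$ and collecting the coefficients of $p_{11}$ and $p_{21}$ rewrites the relation as $p_{11}(\nu x+\nu' x')+p_{21}(\nu y+\nu' y')=0$, fitting Proposition 6.9 with $(\lambda,\mu)=(p_{11},p_{21})$, the first column of $P$. The case $\spa{Y,Y'}_{F_1}$ is the same with the second column of $P$.

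The argument is essentially bookkeeping once Proposition 6.9 is in hand. The key observation is that a row of $Q$ supplies the $(\nu,\nu')$ factor and a column of $P$ supplies the $(\lambda,\mu)$ factor required by Proposition 6.9, so the four possible span-collapses in the conclusion match the two rows of $Q$ and the two columns of $P$.
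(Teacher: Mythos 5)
Your proof is correct and takes essentially the same route as the paper: both argue by contradiction, converting any collapse of one of the four spans into a relation $(\nu,\nu')\begin{pmatrix} x & y \\ x' & y' \end{pmatrix}\begin{pmatrix}\lambda \\ \mu\end{pmatrix}=(0)$ with a row of $Q$ supplying $(\nu,\nu')$ and a column of $P$ (or the dependence coefficients transported through $P$) supplying $(\lambda,\mu)$, then invoking Proposition 6.9. The paper is merely terser, writing the dependences as $Y=\lambda X$ and $X'=\mu X$ and handling the other two cases implicitly by symmetry, whereas you carry out the coordinate bookkeeping for all four cases explicitly.
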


\begin{proof}
Assume that $Y=\lambda X$ for some $\lambda\in F_1$.
Then
$$
(1,0) \begin{pmatrix} X & Y \\ X' & Y' \end{pmatrix}\begin{pmatrix} \lambda \\ -1 \end{pmatrix}=(0),
$$
namely
$$
(1,0)Q\begin{pmatrix} x & y \\ x' & y' \end{pmatrix} P\begin{pmatrix} \lambda \\ -1 \end{pmatrix}
=(0).
$$
Then, by Proposition 6.9 we have $Av\cap Av'=0$, a contradiction.

Assume next that $X'=\mu X$ for some $\mu\in F_1$.
Then
$$
(\mu, -1)\begin{pmatrix} X & Y \\ X' & Y' \end{pmatrix}\begin{pmatrix} 1 \\ 0 \end{pmatrix}
=(0),
$$
namely
$$
(\mu, -1)Q\begin{pmatrix} x & y \\ x' & y' \end{pmatrix} P \begin{pmatrix} 1 \\ 0 \end{pmatrix}
=(0).
$$
Again by Proposition 6.9 we have $Av\cap Av'=0$, a contradiction.
\end{proof}

\section{Two-dimensional intersection for a split Albert algebra}

Let $F$ be an algebraically closed field.
In this section we show that for the split Albert algebra $\phi_{d_0,d_1,d_2}\colon U\times V\to W$ of Section 2 the existence of a two-dimensional intersection in $W^2$ implies that $d_0d_1d_2=1$.

For the proof we use a normal form of a pair of 2 by 2 matrices, the simplest case  of the Kronecker normal form.
Let $M_2(F)$ denote the algebra of 2 by 2 matrices over $F$.
We say  elements $(A,B)$ and  $(A',B')$ in $M_2(F)\times M_2(F)$ are equivalent if
$(PAQ, PBQ)=(A', B')$ for some $P, Q\in GL_2(F)$.

\begin{prop}
Every  element  in $M_2(F)\times M_2(F)$ is equivalent to one of the elements in (i)--(vii):

(i)  
$$
(\begin{pmatrix} 1 & 0 \\ 0 & 1 \end{pmatrix}, \begin{pmatrix} \lambda & 0 \\ 0 & \mu \end{pmatrix} )
\quad(\lambda,\mu\in F)
$$

(ii)  
$$
(\begin{pmatrix} 1 & 0 \\ 0 & 1 \end{pmatrix}, \begin{pmatrix} \lambda & 0 \\ 1 & \lambda \end{pmatrix})
\quad(\lambda\in F)
$$

(iii) the switch of (i). 

(iv) the switch of (ii).

(v)
$$
(\begin{pmatrix} 1 & 0 \\ 0 & 0 \end{pmatrix},
\begin{pmatrix} 0 & 0 \\ 0 & 1 \end{pmatrix})
$$

(vi)
$$
(\begin{pmatrix} * & * \\ 0 & 0 \end{pmatrix},
\begin{pmatrix} * & * \\ 0 & 0 \end{pmatrix})
$$

(vii)
$$
(\begin{pmatrix} * & 0 \\ * & 0 \end{pmatrix},
\begin{pmatrix} * & 0 \\ * & 0 \end{pmatrix})
$$
\end{prop}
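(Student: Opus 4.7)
The plan is to perform a case analysis based on the ranks of $A$ and $B$, exploiting the identities $\operatorname{im}(PAQ) = P(\operatorname{im} A)$ and $\Ker(PAQ) = Q^{-1}(\Ker A)$, together with algebraic closure (for Jordan forms). A pair of distinct lines in $F^2$ can always be moved to $Fe_1, Fe_2$ by an element of $GL_2(F)$, and this basic fact is used repeatedly.

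I first handle the case where one of the matrices is invertible. If $A \in GL_2(F)$, take $P = A^{-1}$ and $Q = I$ to reduce the pair to $(I, A^{-1}B)$; the residual symmetries preserving $A = I$ satisfy $PQ = I$, that is, they act on the second entry by conjugation. Since $F$ is algebraically closed, the Jordan form of $A^{-1}B$ is either diagonal or a single $2 \times 2$ Jordan block, giving forms (i) and (ii) respectively. If instead $B$ is invertible and $A$ is not, apply the same argument to $(B, A)$ and swap to obtain forms (iii) and (iv).

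It remains to treat the case in which both $A$ and $B$ are singular, so each has $\rank$ at most $1$. Set $M = \operatorname{im} A + \operatorname{im} B \subset F^2$ and $K = \Ker A \cap \Ker B \subset F^2$. If $\dim M \le 1$, choose $P \in GL_2(F)$ carrying $M$ into $Fe_1$; then every column of $PA$ and of $PB$ lies in $Fe_1$, so their second rows vanish and we obtain form (vi). If $\dim K \ge 1$, choose $Q \in GL_2(F)$ with $Q(Fe_2) \subset K$; then the second columns of $AQ$ and $BQ$ vanish, giving form (vii). If neither degeneracy holds, then $\dim M = 2$ and $\dim K = 0$, which together with $\rank A, \rank B \le 1$ forces both ranks to be exactly one with $\operatorname{im} A \ne \operatorname{im} B$ and $\Ker A \ne \Ker B$. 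Choose $P$ sending $\operatorname{im} A, \operatorname{im} B$ to $Fe_1, Fe_2$, and $Q$ sending $Fe_2, Fe_1$ to $\Ker A, \Ker B$. Then $PAQ = \alpha E_{11}$ and $PBQ = \beta E_{22}$ for some $\alpha, \beta \in F^{\times}$, and a final diagonal rescaling on the left kills the scalars, yielding form (v).

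The argument is essentially bookkeeping; the one substantive point is that in the last subcase of Step~3 the normalisations on $P$ and on $Q$ are genuinely independent, so that both constraints can be met simultaneously. The proposition asserts only existence of a normal form, not uniqueness of the type, and the listed forms do overlap on edge cases (the zero pair fits both (vi) and (vii), and the diagonal cases (i) and (iii) overlap when both matrices are invertible), so no disjointness needs to be verified.
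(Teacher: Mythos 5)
Your proof is correct and follows essentially the same route as the paper: when one matrix is invertible you reduce to conjugation and invoke the Jordan normal form over the algebraically closed field $F$ (giving (i)--(iv) up to the switch), and in the singular case you normalize a pair of rank-one matrices to reach (v), (vi), or (vii). The only difference is organizational: you branch on the invariants $\operatorname{im} A+\operatorname{im} B$ and $\Ker A\cap \Ker B$ before normalizing, whereas the paper first makes $A=\begin{pmatrix} 1 & 0 \\ 0 & 0 \end{pmatrix}$ and inspects the entries of $B$; your version has the small merit of explicitly covering the rank-zero degeneracies that the paper's \lq readily follows\rq\ leaves implicit.
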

 
\begin{proof}
Let $(A, B)\in M_2(F)\times M_2(F)$.
Suppose first that $A$ is invertible. By left multiplication we can make $A$ into the identity matrix. 
Then, by conjugation we can make $B$ into a Jordan normal form without affecting $A$.
Thus $(A,B)$  falls  in  (i) or (ii).

Suppose next that $A$ and $B$ both have rank one.
By row and column operations we make
$$
A=\begin{pmatrix} 1 & 0 \\ 0 & 0 \end{pmatrix}.
$$
Then write
$$
B=\begin{pmatrix} b_{11} & b_{12} \\ b_{21} & b_{22} \end{pmatrix}.
$$
If $b_{22}\ne 0$, then by row and column operations we can 
make $b_{12}=b_{21}=0$ and $b_{22}=1$ without affecting $A$. Then $(A,B)$ falls in (v).
If $b_{22}=0$, then $b_{12}b_{21}=0$, so $b_{12}=0$ or $b_{21}=0$.
Then $(A,B)$ is of the form in (vi) or (vii).

The proposition readily follows from these considerations.

\end{proof}

Let $d_0,d_1,d_2\in F^{\times}$. 
Let $\phi=\phi_{d_0,d_1,d_2}\colon U\times V\to W$ be the split Albert algebra over $F$ defined in Section 2:
$U$, $V$, $W$ are three-dimensional spaces over $F$ having bases
$(\alpha_i)$, $(\beta_i)$, $(\gamma_i)$, respectively;
With product notation $\phi(u,v)=uv$, we have
$$
\alpha_i\beta_i=0,\quad
\alpha_i\beta_{i+1}=\gamma_{i+2},\quad
\alpha_i\beta_{i+2}=d_{i+1}\gamma_{i+1}.
$$
Put $d=d_0d_1d_2$. We assume $d\ne -1$ throughout.
As in Section 3 we have the induced map $U\times V^2\to W^2\colon (u,(x,y))\mapsto u(x,y)=(ux,uy)$. For $(x,y)\in V^2$ we have the subspace $U(x,y)=\{u(x,y)\mid u\in U\}\subset W^2$.

\begin{thm}
Let $x,y,x',y'\in V$. 
Suppose that the $F$-spans $\spa{x,y}$, $\spa{x',y'}$, $\spa{x,x'}$, $\spa{y,y'}$ are all two-dimensional.  Suppose further that
for any transformation 
$$
Q\begin{pmatrix} x & y \\ x' & y' \end{pmatrix} P=
\begin{pmatrix} X & Y \\ X' & Y' \end{pmatrix} 
$$
with $P, Q\in GL_2(F)$, the $F$-spans
$\spa{X,Y}$, $\spa{X',Y'}$, $\spa{X,X'}$, 
$\spa{Y, Y'}$ remain two-dimensional.
Suppose that $\spa{x,y}\ne \spa{x',y'}$.
If 
$U(x,y)\cap U(x',y')$ is two-dimensional,
then $d=1$.
\end{thm}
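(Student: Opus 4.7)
Fix the basis $(\beta_0,\beta_1,\beta_2)$ of $V$ and expand each entry of $M = \begin{pmatrix} x & y \\ x' & y' \end{pmatrix}$ to obtain three matrices $M_0, M_1, M_2 \in M_2(F)$, where $M_i$ records the $\beta_i$-coefficients. A transformation $QMP$ as in the hypothesis acts simultaneously as $M_i \mapsto QM_iP$. By Proposition 7.1 we may choose $Q, P \in GL_2(F)$ placing the pair $(M_0, M_1)$ in one of the normal forms (i)--(vii); replace $(x,y,x',y')$ by the transformed quadruple. By hypothesis the dimension conditions and the two-dimensional-intersection condition all transfer to the new quadruple.

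\textbf{Exclusion of the degenerate forms.} Cases (vi) and (vii) force the second row or the second column to lie entirely in $F\beta_2$, violating $\dim\spa{X',Y'} = 2$ or $\dim\spa{Y,Y'} = 2$. Case (v) is ruled out by a direct computation using the multiplication recalled in Section 2: the six equations encoding $uX = u'X'$ and $uY = u'Y'$ force $u = u' = 0$, giving a zero-dimensional intersection. Cases (iii) and (iv) are reduced to (i) and (ii) respectively by the involution $\alpha_i \mapsto \alpha_{1-i}$, $\beta_i \mapsto \beta_{1-i}$, $\gamma_i \mapsto d_i^{-1}\gamma_{1-i}$ of Section 2, which swaps $\beta_0 \leftrightarrow \beta_1$ (hence $M_0 \leftrightarrow M_1$) and realizes an isomorphism $\phi_{d_0,d_1,d_2} \cong \phi_{d_1^{-1},d_0^{-1},d_2^{-1}}$; since $d_0d_1d_2 = 1$ iff $(d_0d_1d_2)^{-1} = 1$, the target conclusion is preserved.

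\textbf{The main cases.} For (i) and (ii), write out the six $F$-linear equations for $(u_0,u_1,u_2,u'_0,u'_1,u'_2)$ arising from $u(X,Y) = u'(X',Y')$ expanded in $(\gamma_0,\gamma_1,\gamma_2)$. Four of these (the two $\gamma_2$-components together with one of the other coordinates of each equation) can be used to eliminate four of the six variables, leaving a pair of linear equations in the remaining two. The hypothesis $\dim(U(X,Y) \cap U(X',Y')) = 2$ is then equivalent to the vanishing of all coefficients in both residual equations, giving four scalar conditions in the parameters $\lambda,\mu, d_0,d_1,d_2$ and the entries of $M_2$. In case (ii) the resulting conditions are mutually incompatible with $b, c \neq 0$ and $d \neq -1$ (with the subcase $\lambda = 0$ giving trivial intersection directly), so case (ii) is excluded. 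In case (i), using $b, c \neq 0$, the conditions reduce to $\mu + \lambda d = 0$ and $\lambda + \mu d = 0$; adding gives $(\mu+\lambda)(1+d)=0$, forcing $\mu = -\lambda$, after which substitution yields $\lambda(d-1) = 0$. Since $\lambda \neq 0$ (else $\mu = 0$ and $\spa{X,Y} = \spa{X',Y'}$, contradicting a hypothesis) and $d \neq -1$, we obtain $d = 1$.

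\textbf{Main obstacle.} The principal difficulty is the linear-algebraic bookkeeping in cases (i) and (ii): isolating the role of $d = d_0d_1d_2$ among the many free parameters $\lambda,\mu$ and the entries of $M_2$, and treating all subcases (vanishing $\lambda$ or $\mu$, characteristic~2, and so on). The normalization furnished by Proposition 7.1 is precisely what makes these computations tractable, by collapsing four of the twelve entries of the first two coordinate matrices to prescribed values.
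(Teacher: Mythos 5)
Your proposal is correct and takes essentially the same route as the paper's own proof: normalize the pair of coefficient matrices via Proposition 7.1 (transferring the hypotheses with Proposition 6.1), discard cases (v)--(vii) directly, fold (iii)--(iv) into (i)--(ii) by the index-swapping isomorphism $\phi_{d_0,d_1,d_2}\cong\phi_{d_1^{-1},d_0^{-1},d_2^{-1}}$, and in cases (i)--(ii) eliminate four of the six unknowns so that the two-dimensionality of the intersection forces a $2\times 2$ coefficient matrix to vanish, giving $d=1$ in case (i) and a contradiction in case (ii). The only (immaterial) difference is the endgame of case (i): you add the two off-diagonal conditions to get $(\lambda+\mu)(1+d)=0$ and then $\lambda(d-1)=0$, whereas the paper runs a four-subcase analysis on the vanishing of $x_1$ and $y_1'$; both arguments yield $d=1$.
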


\begin{proof}
As observed in Section 2, since $x,y$ are independent, the map
$U\to W^2\colon a\mapsto a(x,y)$ is injective.
Hence, as in Proposition 6.1(i) we have an isomorphism
$$
U(x,y)\cap U(x',y')\cong \{(a,a')\in U^2\mid a(x,y)=a'(x',y')\}.
$$
Write
\begin{alignat*}{2}
x&=x_0 \beta_0+x_1 \beta_1+x_2 \beta_2, \quad&
y&=y_0 \beta_0+y_1 \beta_1+y_2 \beta_2,\\
x'&=x_0' \beta_0+x_1' \beta_1+x_2' \beta_2, \quad&
y'&=y_0' \beta_0+y_1' \beta_1+y_2' \beta_2.
\end{alignat*}
Put
$$
G_0=\begin{pmatrix} x_0 & y_0 \\ x'_0 & y'_0 \end{pmatrix},\;
G_1=\begin{pmatrix} x_1 & y_1 \\ x'_1 & y'_1 \end{pmatrix}.
$$
Take $P, Q\in GL_2(F)$ so that the pair
$(PG_0Q, PG_1Q)$ is  one of (i)--(vii) in Proposition 7.1.
Put 
$$
Q\begin{pmatrix} x & y \\ x' & y' \end{pmatrix} P=
\begin{pmatrix} X & Y \\ X' & Y' \end{pmatrix}. 
$$
By our hypothesis $\spa{X,Y}$, $\spa{X',Y'}$, $\spa{X,X'}$, $\spa{Y,Y'}$ are all two-dimensional.
Note $\spa{x,y,x',y'}=\spa{X,Y,X',Y'}$. This space has dimension $>2$ because 
$\spa{x,y}\ne \spa{x',y'}$. So 
$\spa{X,Y}\ne \spa{X',Y'}$ as well.
And
$$
 \{(a,a')\in U^2\mid a(x,y)=a'(x',y')\}
\cong \{(a,a')\in U^2\mid a(X,Y)=a'(X',Y')\}
$$
by Proposition 6.1(ii), (iii).

Therefore, resetting $X,Y,X',Y'$ as $x,y,x',y'$, we may assume  that $(G_0, G_1)$ itself is  one of  (i)--(vii); then we shall show that if the space
$\{(a,a')\in U^2\mid a(x,y)=a'(x',y')\}$ is two-dimensional,
then $d=1$.

\medskip
Case i: $(G_0,G_1)$ is of the form in (i).
Namely
\begin{alignat*}{2}
x&=\beta_0+x_1 \beta_1+x_2 \beta_2,\quad&
y&=y_2 \beta_2,\\
x'&=x_2' \beta_2, \quad&
y'&=\beta_0+y_1'\beta_1+ y_2'\beta_2.
\end{alignat*}
By the independence of $x, y$ and that of $x',y'$, we must have
$y_2\ne 0$, $x_2'\ne 0$.

Let $a,a'\in U$ and write
$$
a=P\alpha_0+Q\alpha_1+R\alpha_2, \;
a'=P'\alpha_0+Q'\alpha_1+R'\alpha_2
$$
with $P,Q,R, P',Q',R'\in F$.
Expanding $ax$ and $a'x'$ and comparing their coefficients, one sees that the equation $ax=ax'$ amounts to  equations
\begin{align*}
Qx_2+Rx_1d_0&=Q'x'_2, \tag1\\
R +Px_2d_1&=P'x_2'd_1, \tag2\\
Px_1+Q d_2&=0. \tag3
\end{align*}
Similarly the equation $ay=a'y'$ amounts to equations
\begin{align*}
Qy_2&=Q'y_2'+R'y_1'd_0, \tag4\\
Py_2d_1&=R' +P'y_2'd_1, \tag5\\
0&=P'y_1'+Q' d_2. \tag6
\end{align*}
Solving (2), (5), (3), (6), we obtain
\begin{align*}
R&=-Px_2d_1+P'x_2' d_1, \tag7\\
R'&= Py_2d_1-P'y_2' d_1, \tag8\\
Q&=-P x_1 \frac{1}{d_2}, \tag9\\
Q'&=-P'y_1'\frac{1}{d_2}. \tag{10}
\end{align*}
Putting (7), (8), (9), (10) into (1) and (4), we obtain
\begin{align*}
-P(x_1 x_2\frac{1}{d_2}+x_2 x_1d_1d_0)
&=-P'(y_1' x_2'\frac{1}{d_2} +x_2'x_1 d_1d_0),\\
-P(x_1 y_2\frac{1}{d_2}+y_2 y_1'd_1d_0)
&=-P'(y_1' y_2'\frac{1}{d_2}+y_2' y_1'd_1 d_0).
\end{align*}
Multiplying the both sides by $d_2$ and using $d=d_0d_1d_2$, we have
\begin{align*}
Px_1x_2(1+d)&=P'x_2'(y_1'+x_1d), \tag{11}\\
Py_2(x_1+y_1'd)&=P'y_1'y_2'(1+d). \tag{12}
\end{align*}
Thus (1)--(6) are equivalent to (7)--(12), so that we have an isomorphism
\begin{align*}
&\{(a,a')\in U^2\mid ax=a'x', ay=a'y'\}\\
&\cong
 \{(P,Q,R,P',Q',R')\in F^6\mid \text{(7)--(12) hold}\}.
\end{align*}
Let $M$ be the coefficient matrix of (11), (12):
$$
M=\begin{pmatrix}
x_1x_2(1+d) & x_2'(y_1'+x_1d) \\
y_2(x_1+y_1'd) & y_1'y_2'(1+d)
\end{pmatrix}.
$$
Then 
$$
\dim\{(a,a')\in U^2\mid ax=a'x', ay=a'y'\}
=2-\rank M.
$$
So
$\{(a,a')\in U^2\mid ax=a'x', ay=a'y'\}$ is two-dimensional if and only if $M=O$.

Suppose $M=O$.
Since $y_2\ne 0$, $x_2'\ne0$, $1+d\ne 0$, we have
\begin{alignat*}{2}
x_1x_2&=0, \quad& 
y_1'+x_1d&=0, \\
x_1+y_1'd&=0,\quad&
y_1'y_2'&=0.
\end{alignat*}

Case where $x_1=0$, $y_1'=0$.
The four equations are all trivial.

Case where $x_1\ne 0$, $y_1'\ne 0$.
Then
$$
x_2=0, \;y_2'=0, \; 
x_1=-y_1'd=x_1d^2.
$$
Hence
$d^2=1$.
Since $d\ne -1$, we have $d=1$.

Case where $x_1=0$, $y_1'\ne 0$.
This contradicts the second of the four equations.

Case where $x_1\ne 0$, $y_1'=0$.
This contradicts the third equation.

Consequently we have
$M=O$ if and only if
$x_1=0,\; y_1'=0$
 or 
$d=1,\; x_2=0,\; y_2'=0,\; x_1=-y_1'$. 
In the former case we have
\begin{alignat*}{2}
x&=  \beta_0+x_2\beta_2, \quad&
y&=y_2\beta_2,\\
x'&=x_2'\beta_2, \quad&
y'&= \beta_0+y_2'\beta_2.
\end{alignat*}
Then $\spa{x,y}=\spa{\beta_0,\beta_2}=\spa{x',y'}$, contrary to the hypothesis.

We conclude that 
if $M=O$ then  $d=1$.

\medskip
Case ii: $(G_0,G_1)$ is of the form in  (ii). Namely
\begin{alignat*}{2}
x&=  \beta_0+x_1 \beta_1+x_2 \beta_2, \quad&
y&=y_2 \beta_2,\\
x'&= \beta_1+x_2'\beta_2, \quad&
y'&= \beta_0+y_1'\beta_1+y_2'\beta_2
\end{alignat*}
with $x_1=y_1'$.
By the independence of $x, y$ we must have
$y_2\ne 0$.

Let $a,a'\in U$ and write
$$
a=P\alpha_0+Q\alpha_1+R\alpha_2, \;
a'=P'\alpha_0+Q'\alpha_1+R'\alpha_2.
$$
Similarly to the previous case one sees that 
the equations $ax=a'x'$ and $ay=a'y'$ amount to  equations
\begin{align*}
R&=-P x_2 d_1+P' x_2'  d_1,  \\
R'&= P y_2 d_1-P' y_2' d_1,  \\
Q&= -P x_1 \frac{1}{d_2}+P' \frac{1}{d_2},  \\
Q'&=-P' y_1' \frac{1}{d_2}, \\
P( x_1x_2 (1+d)+ y_2  d)
&=P'(  x_2 + y_1'x_2' +( x_2'x_1 + y_2'  )d),\\
P( x_1y_2 + y_2y_1' d)
&=P'(  y_2 + y_1'y_2' (1+d)).
\end{align*}
Put
$$
M=\begin{pmatrix}
 x_1x_2 (1+d)+ y_2   d & 
  x_2 + y_1'x_2' +( x_2'x_1 + y_2'  )d
\\
 x_1y_2 + y_2y_1' d & 
  y_2 + y_1'y_2' (1+d)
\end{pmatrix}.
$$
We have 
$\{(a,a')\in U^2\mid ax=a'x', ay=a'y'\}$ is two-dimensional if and only if $M=O$.

Recall that $x_1=y_1'$.
The $(2,1)$-entry of $M$ is
$x_1 y_2+y_2y_1'd=y_2x_1(1+d)$.
Since $y_2\ne 0$ and $1+d\ne 0$, if this entry is zero, then
$x_1=0$. But when $x_1=0$, 
the $(1,1)$-entry is
$y_2 d\ne 0$
as $y_2\ne 0$.

We conclude that $M\ne O$.
This settles  Case ii.

\medskip
 Case iii \& iv: $(G_0,G_1)$ is of the form in (iii) or (iv).

Let $\pi$ be the permutation $0\mapsto 1, 1\mapsto 0, 2\mapsto 2$.
As noted in Section 2 we have an isomorphism of bilinear maps
$$
(f,g,h)\colon (\phi_{d_0,d_1,d_2}\colon U\times V\to W)\to 
(\phi_{1/d_1,1/d_0,1/d_2}\colon U\times V\to W)
$$
given by
$$
f(\alpha_i)=\alpha_{\pi(i)}, \;
g(\beta_i)=\beta_{\pi(i)},\;
h(\gamma_i)=d_i^{-1}\gamma_{\pi(i)}.
$$
This isomorphism interchanges $G_0$ and $G_1$, so turns  Case iii into Case i, Case iv into Case ii.
And $d_0d_1d_2=1$ if and only if $d_0^{-1}d_1^{-1}d_2^{-1}=1$. This settles the present case.

\medskip
Case v: $(G_0,G_1)$ is of the form in (v). Namely
\begin{alignat*}{2}
x&=\beta_0+x_2\beta_2\,\quad&
y&=y_2\beta_2,\\
x'&=x_2'\beta_2,\quad&
y'&=\beta_1+y_2'\beta_2.
\end{alignat*}
We have $x_2'\ne 0$, $y_2\ne 0$.

Let $a,a'\in U$ and write
$$
a=P\alpha_0+Q\alpha_1+R\alpha_2, \;
a'=P'\alpha_0+Q'\alpha_1+R'\alpha_2.
$$
The equations $ax=a'x'$ and $ay=a'y'$ amount to  equations
\begin{align*}
Qx_2&=Q'x'_2, \\
R+Px_2d_1&=P'x_2'd_1, \\
Qd_2&=0, \\
Qy_2&=Q'y_2'+R'd_0, \\
Py_2d_1&=P'y_2'd_1 \\
0&=P'.
\end{align*}
From these one readily deduces that
$P,Q,R, P', Q', R'$ are all zero.
We conclude 
$\{(a,a')\in U^2\mid ax=a'x', ay=a'y'\}=0$.

\medskip
Case vi: $(G_0,G_1)$ is of the form in (vi).
Then
$x'=x_2' \beta_2$, $y'=y_2'\beta_2$.
So $x', y'$ are dependent.

\medskip
Case vii: $(G_0, G_1)$ is of the form in (vii).
Then
$y=y_2\beta_2$, $y'=y_2'\beta_2$.
So $y, y'$ are dependent.

In every possible case  we have proved that if $\{(a,a')\in U^2\mid ax=a'x', ay=a'y'\}$ is two-dimensional then $d=1$.

\end{proof}

\section{Two-dimensional intersection for a division algebra}

Let $F$ be a finite field. Let $A$ be a three-dimensional nonassociative division algebra over $F$.

\begin{thm}
Suppose that $\dim(Av\cap Av')=2$ for some $v,v'\in A^2$. Then $A$ is isotopic to a commutative algebra.
\end{thm}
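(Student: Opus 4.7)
The plan is to reduce the problem to the algebraically closed setting already handled in Theorem 7.2, read off the structural equation $d_0d_1d_2=1$, and then translate it back via the relation $d=-N(c)$ into commutative isotopy for a twisted field.

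First I would check that the property ``$\exists\,v,v'\in A^2$ with $\dim(Av\cap Av')=2$'' is an isotopy invariant. If $(f,g,h)\colon A\to A'$ is an isotopy, a direct computation gives $h(ax)=f(a)g(x)$ and $h(ay)=f(a)g(y)$, so $(h,h)(Av)=A'\cdot(g(x),g(y))$. Since $(h,h)$ is a linear isomorphism, intersection dimensions are preserved. Hence by Theorem 1.1 I may assume $A=(K,\mu)$ is itself the twisted field associated with a triple $(K/F,\sigma,c)$ with $N(c)\neq 1$.

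Next I would extend scalars to $F_1=\bar F$. Chaining Proposition 1.2 with a further base change from $K$ to $F_1$ identifies $A_1:=F_1\otimes_F A$ with a split Albert algebra $\phi_{d_0,d_1,d_2}\colon U\times V\to W$ over $F_1$ with $d_i=-c^{\sigma^i}$, so $d:=d_0d_1d_2=-N(c)\neq -1$. Flatness of field extensions gives
$$\dim_{F_1}(A_1v\cap A_1v')=\dim_F(Av\cap Av')=2,$$
and the isomorphism transports this to $\dim_{F_1}(U(X,Y)\cap U(X',Y'))=2$ for the images $X,Y,X',Y'\in V$ of $x,y,x',y'$.

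To invoke Theorem 7.2 I must verify the two-dimensionality of the four spans $\langle X,Y\rangle,\langle X',Y'\rangle,\langle X,X'\rangle,\langle Y,Y'\rangle$, the distinctness $\langle X,Y\rangle\neq\langle X',Y'\rangle$, and stability of these conditions under arbitrary $GL_2(F_1)\times GL_2(F_1)$ transformations. Since $\dim Av=3>2=\dim(Av\cap Av')>0$, Proposition 6.8 delivers the span and distinctness hypotheses over $F$; both survive passage to $F_1$ and application of the $F_1$-linear isomorphism to $V$. Proposition 6.10 applied with $F_1=\bar F$ supplies the transformation-stability hypothesis. Theorem 7.2 then forces $d=1$, i.e.\ $N(c)=-1=N(-1)$, and by the remark at the end of Section 1, $(K,\mu)$ is isotopic to the commutative twisted field corresponding to $c=-1$; hence so is $A$.

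I expect the main obstacle to be bookkeeping rather than conceptual difficulty: keeping the two successive base extensions $F\to K\to\bar F$ straight, confirming that the non-degeneracy conditions on $(x,y,x',y')$ transfer to their images in $V$, and that Proposition 6.10 is applied with $GL_2(\bar F)$ (not merely $GL_2(F)$) so as to match exactly the ``for any $P,Q$'' hypothesis of Theorem 7.2.
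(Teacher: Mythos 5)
Your proposal is correct and follows essentially the same route as the paper: Proposition 6.8 and Proposition 6.10 supply the span and transformation-stability hypotheses, the splitting of the twisted field over $\tilde F$ transports the two-dimensional intersection into the split Albert algebra, Theorem 7.2 yields $d=1$, i.e.\ $N(c)=-1$, and the Section 1 remark gives isotopy to a commutative algebra. The only difference is organizational: you first replace $A$ by the twisted field $(K,\mu)$ via isotopy invariance of the intersection dimension, whereas the paper keeps $A$ and absorbs the isotopy into the single isomorphism $(f,g,h)$ of bilinear maps before applying Theorem 7.2 --- the underlying computation ($h(av)=f(a)g(v)$) is the same.
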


\begin{proof}
Let $\tilde F$ be an algebraic closure of $F$. Let $\tilde A=\tilde F\otimes A$.
Write $v=(x,y)$, $v'=(x',y')$.
By Proposition 6.8 the $F$-spans
$\spa{x,y}$, $\spa{x',y'}$, $\spa{x,x'}$, $\spa{y,y'}$ are all two-dimensional and $\spa{x,y}\ne \spa{x',y'}$.
By Proposition 6.10, after the transformation
$$
Q\begin{pmatrix} x & y \\ x' & y' \end{pmatrix}
P=
\begin{pmatrix} X & Y \\ X' & Y' \end{pmatrix}
$$
for any $P, Q\in GL_2(\tilde F)$,
the $\tilde F$-spans $\spa{X,Y}$, $\spa{X',Y'}$, $\spa{X,X'}$, $\spa{Y,Y'}$ in $\tilde A$ remain two-dimensional.

Let $m\colon A\times A\to A$ denote the multiplication of $A$ and 
$\tilde m\colon \tilde A\times \tilde A\to \tilde A$ that of $\tilde A$.
Let $K\subset \tilde F$ be a cubic extension of $F$ and 
$\sigma$ a generating automorphism of $K/F$.
By Theorem 1.1 $A$ is isotopic to the twisted field $(K,\mu)$ associated with an element $c\in K^{\times}$. 
And we have an isomorphism  $K\otimes (K,\mu)\to (K^3,\nu)$ as defined in Section 1.
The multiplication $\nu\colon K^3\times K^3\to K^3$ is viewed as a bilinear map $\phi=\phi_{d_0,d_1,d_2}\colon U\times V\to W$ with 
$d_i=-c^{\sigma^i}$ (Section 2).
Let $\tilde \phi\colon \tilde U\times \tilde V\to \tilde W$ be the map obtained from $\phi$ by the scalar extension $\tilde F/K$.

Combining these isomorphisms, 
we have an isomorphism $(f,g,h)$ from the bilinear map $\tilde m\colon \tilde A\times \tilde A\to \tilde A$ to the bilinear map $\tilde \phi\colon 
\tilde U\times \tilde V\to \tilde W$.
Now we take $\tilde F$ as a base field and apply Theorem  7.2 to the elements $g(x), g(y), g(x'), g(y')\in \tilde V$.
The consequence is that $d=d_0d_1d_2=1$, namely $N(c)=-1$.
Then $(K, \mu)$ is isotopic to the twisted field associated with $-1\in K^{\times}$, which is  commutative.
Hence $A$ is isotopic to a commutative algebra.

\end{proof}

\section{Intersection of a given dimension}

Throughout this section $F$ is a finite field and $A$ is a three-dimensional nonassociative division algebra over $F$. 
We showed in Section 4 that two-dimensional intersections of $Av$ occur when $A$ is isotopic to a commutative algebra. In this section we look at intersections of $Av$ more closely. Especially, given $v\in A^2$ we compute the number of $v'\in A^2$ such that $\dim(Av\cap Av')=0,1,2$, respectively. 

\subsection{the commutative case}

In this subsection we treat the case where $A$ is commutative.
First we supplement Theorem 4.1 with uniqueness assertion.  

\begin{prop}
Let $v=(x,y)\in A^2$ be nondegenerate.
Let $x'\in A$ and suppose that $x, x'$ are independent over $F$.
Then there exists a unique $y'\in A$ such that $\dim(Av\cap Av')=2$ for $v'=(x',y')$.
And for such $y'$ we have 
$Av\cap Av'=\spa{x',y'}v=\spa{x,y}v'$.
\end{prop}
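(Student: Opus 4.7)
The plan is to break the proposition into three parts: existence of $y'$, uniqueness of $y'$, and the identification $Av\cap Av'=\spa{x',y'}v=\spa{x,y}v'$. The existence and the identification follow essentially from the construction in the proof of Theorem 5.1, so the only genuinely new content is uniqueness.

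For existence, I will define $y'\in A$ by $xy'=x'y$, which is possible since $A$ is a (commutative) division algebra and $L_x$ is a bijection. Commutativity then yields $x'v=(x'x,x'y)=(xx',xy')=xv'$ and similarly $y'v=yv'$, so $\spa{x',y'}v=\spa{x,y}v'\subseteq Av\cap Av'$. I then check that $x',y'$ are $F$-independent: if $y'=\alpha x'$ with $\alpha\in F$, then $x'y=xy'=\alpha(x'x)$, hence $y=\alpha x$ by injectivity of $R_{x'}$, contradicting nondegeneracy of $v$. Hence $\spa{x',y'}v$ is two-dimensional and $v'$ itself is nondegenerate. Since $x,x'$ independent forces $Fv\ne Fv'$, Theorem 4.1 gives $Av\ne Av'$, so $Av\cap Av'$ is exactly two-dimensional and coincides with the subspace just described.

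For uniqueness, suppose $v''=(x',y'')$ with $y''\ne y'$ also gives $\dim(Av\cap Av'')=2$. I will derive a contradiction by showing $Av'\cap Av''=0$. The sharing of the first coordinate $x'$ makes this easy: an arbitrary element $av'+bv''$ of $Av'+Av''$ has the form $((a+b)x',\,ay'+by'')$, so given $(z_1,z_2)\in A^2$ the choices $c=R_{x'}^{-1}(z_1)$ followed by $a=R_{y'-y''}^{-1}(z_2-cy'')$ (both well-defined because $x'\ne 0$, $y'-y''\ne 0$, and $A$ is a division algebra) exhibit $(z_1,z_2)=av'+(c-a)v''$. Hence $Av'+Av''=A^2$, forcing $\dim(Av'\cap Av'')=0$. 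But $W'=Av\cap Av'$ and $W''=Av\cap Av''$ are two 2-dimensional subspaces of the 3-dimensional space $Av$, so $\dim(W'\cap W'')\ge 1$, while $W'\cap W''\subseteq Av'\cap Av''=0$. This contradiction forces $y''=y'$.

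I do not anticipate any obstacle beyond spotting the transversality observation $Av'\cap Av''=0$ for pairs sharing a first coordinate; once this is in hand, the rest is a direct application of Theorem 4.1 and a short dimension count.
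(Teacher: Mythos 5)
Your proof is correct, and its overall decomposition matches the paper's: existence and the identification $Av\cap Av'=\spa{x',y'}v=\spa{x,y}v'$ come from the Theorem 5.1 construction (take $y'$ with $xy'=x'y$, use commutativity to get $x'v=xv'$ and $y'v=yv'$, then exclude $Av=Av'$ via Theorem 4.1), and uniqueness comes from the observation that two $2$-dimensional subspaces of the $3$-dimensional space $Av$ must meet nontrivially, hence $Av'\cap Av''\ne 0$. Where you diverge is in how you exploit the shared first coordinate: the paper at this point simply cites Proposition 6.5 (with $\lambda=1$, it says exactly that two vectors with the same nonzero first coordinate and distinct second coordinates span subspaces meeting trivially), whereas you reprove that special case from scratch by showing $Av'+Av''=A^2$, solving explicitly with $R_{x'}^{-1}$ and $R_{y'-y''}^{-1}$ and then applying the dimension formula. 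Your transversality argument is valid and pleasantly self-contained --- it avoids the chain Propositions 6.1--6.2--6.5 that the paper routes through --- at the cost of duplicating work the paper had already banked in Section 6. One cosmetic remark: in checking that $x',y'$ are independent you appeal to injectivity of $R_{x'}$, while the product you are cancelling is $x'(y-\alpha x)$, i.e.\ an instance of $L_{x'}$; since $A$ is commutative in this subsection the two maps coincide, so nothing is actually wrong.
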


\begin{proof}
The existence was shown in the proof of Theorem 4.1: if $y'\in A$ is taken so that $x'y=xy'$, then
$Av\cap Av'=\spa{x',y'}v=\spa{x,y}v'$ for $v'=(x',y')$ and $\dim(Av\cap Av')=2$.

Let us prove the uniqueness.
Suppose that we have $y_1', y_2'\in A$ such that
$\dim(Av\cap Av_1')=2$, $\dim(Av\cap Av_2')=2$
for $v_1'=(x',y_1')$, $v_2'=(x',y_2')$.
Since $\dim Av=3$, we must have
$(Av\cap Av_1')\cap (Av\cap Av_2')\ne 0$,
so 
$Av_1'\cap Av_2'\ne 0$.
But $v_1'$ and $v_2'$ have the same $x$-coordinate. Therefore, by Proposition 6.5  
they have the same $y$-coordinate, that is,  $y_1'=y_2'$.

\end{proof}

\begin{prop}
Let $v=(x,y), v'=(x',y')\in A^2$. Suppose $\dim(Av\cap Av')=2$. Then 
$Av\cap Av'=\spa{x',y'}v=\spa{x,y}v'$.
\end{prop}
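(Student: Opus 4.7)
The plan is to reduce Proposition 9.2 to the uniqueness-and-formula part of Proposition 9.1. We are in the commutative subsection, so $A$ is commutative and the full strength of 9.1 is available; from the hypothesis $\dim(Av\cap Av')=2$ I would extract the structural conditions on $v,v'$ that let 9.1 be applied with input $(v,x')$, and then invoke its uniqueness clause to identify our given $v'$ with the vector singled out by 9.1.

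First I would note that since $Av$ and $Av'$ are three-dimensional while $Av\cap Av'$ is two-dimensional, we have $Av\neq Av'$ and $Av\cap Av'\neq 0$. Proposition 6.8 then applies and yields that both $v$ and $v'$ are nondegenerate (so $\spa{x,y}$ and $\spa{x',y'}$ are two-dimensional) and, in particular, that $x,x'$ are linearly independent over $F$.

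I would now apply Proposition 9.1 to the nondegenerate vector $v=(x,y)$ and the element $x'$, which is independent from $x$. That proposition produces a unique $y''\in A$ such that $\dim(Av\cap A(x',y''))=2$, and further tells us that for this $y''$ we have $Av\cap A(x',y'')=\spa{x',y''}v=\spa{x,y}(x',y'')$. Our given $y'$ also satisfies $\dim(Av\cap A(x',y'))=2$, so the uniqueness forces $y''=y'$; substituting back yields exactly $Av\cap Av'=\spa{x',y'}v=\spa{x,y}v'$, which is the desired identity. There is no real obstacle here: the substantive work has already been done in Propositions 6.8 and 9.1, and Proposition 9.2 is essentially a reformulation of 9.1 phrased in terms of an arbitrary $v'$ with a two-dimensional intersection rather than one constructed from the first coordinate $x'$.
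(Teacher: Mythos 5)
Your proof is correct and follows essentially the same route as the paper: apply Proposition 6.8 (whose hypotheses $Av\cap Av'\ne 0$ and $Av\ne Av'$ follow from the two-dimensionality of the intersection) to get nondegeneracy of $v$ and independence of $x,x'$, then invoke Proposition 9.1, whose uniqueness clause identifies the given $y'$ with the one it constructs, yielding the stated formula. The paper's proof is just a terser version of exactly this argument.
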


\begin{proof}
By Proposition 6.8 $x,y$ are independent and $x,x'$ are independent.
By  Proposition 9.1  we have
$Av\cap Av'=\spa{x',y'}v=\spa{x,y}v'$.
\end{proof}

\begin{prop}
Let $v, v_1', v_2'\in A^2$. Suppose $\dim (Av\cap Av_1')=\dim(Av\cap Av_2')=2$. If $Av\cap Av_1'=Av\cap Av_2'$, then $Av_1'=Av_2'$.

\end{prop}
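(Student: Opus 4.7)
The plan is to argue by contradiction, assuming $Av_1' \ne Av_2'$, and to pin down the two‑dimensional intersections using Proposition 9.2 until two descriptions become incompatible.

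First I would set $W = Av \cap Av_1' = Av \cap Av_2'$, so $\dim W = 2$ and $W \subseteq Av_1' \cap Av_2'$. Under the contradiction hypothesis the two three‑dimensional spaces $Av_1'$ and $Av_2'$ are distinct, hence neither contains the other and $\dim(Av_1' \cap Av_2') \le 2$. Combined with $\dim(Av_1' \cap Av_2') \ge \dim W = 2$ this gives $Av_1' \cap Av_2' = W$, itself a two‑dimensional intersection.

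Next I would apply Proposition 9.2 to each of the three pairs $(v,v_1')$, $(v,v_2')$, and $(v_1',v_2')$. Writing $v=(x,y)$ and $v_j'=(x_j',y_j')$, this produces six descriptions of the single space $W$:
\begin{align*}
W &= \spa{x,y}\, v_1' = \spa{x_1',y_1'}\, v,\\
W &= \spa{x,y}\, v_2' = \spa{x_2',y_2'}\, v,\\
W &= \spa{x_1',y_1'}\, v_2' = \spa{x_2',y_2'}\, v_1'.
\end{align*}
By Proposition 6.8 each of $v, v_1', v_2'$ is nondegenerate, so each of the maps $a\mapsto av$, $a\mapsto av_1'$, $a\mapsto av_2'$ is injective (since $A$ is a division algebra and the first coordinate of each vector is nonzero). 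Cancelling $v_1'$ in $\spa{x,y}\, v_1' = \spa{x_2',y_2'}\, v_1'$ yields $\spa{x,y} = \spa{x_2',y_2'}$, and cancelling $v_2'$ in $\spa{x,y}\, v_2' = \spa{x_1',y_1'}\, v_2'$ yields $\spa{x,y} = \spa{x_1',y_1'}$. Hence $\spa{x_1',y_1'} = \spa{x_2',y_2'}$.

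But this directly contradicts the final assertion of Proposition 6.8 applied to the pair $(v_1', v_2')$, whose hypotheses $Av_1' \cap Av_2' = W \ne 0$ and $Av_1' \ne Av_2'$ are in force. Therefore our assumption was wrong and $Av_1' = Av_2'$. The argument is essentially bookkeeping once Proposition 9.2 is available; the only thing that needs checking is that the hypotheses of Propositions 6.8 and 9.2 really hold for all three pairs, which follows from $W \ne 0$, from $\dim(Av \cap Av_j')=2 < 3 = \dim Av$ (so $Av \ne Av_j'$), and from the standing contradiction hypothesis $Av_1' \ne Av_2'$. I do not anticipate any deeper obstacle.
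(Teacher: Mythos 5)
Your proof is correct and takes essentially the same approach as the paper: both rest on Proposition 9.2's description of the two-dimensional intersections to deduce $\spa{x_1',y_1'}=\spa{x_2',y_2'}$, and then contradict the final assertion of Proposition 6.8 applied to the pair $(v_1',v_2')$, whose hypotheses $Av_1'\cap Av_2'\ne 0$ and $Av_1'\ne Av_2'$ hold. The only difference is that the paper gets the span equality directly by cancelling $v$ in $\spa{x_1',y_1'}v=\spa{x_2',y_2'}v$, whereas you reach it through an extra application of Proposition 9.2 to the pair $(v_1',v_2')$ — a harmless detour.
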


\begin{proof}
Write $v_1'=(x_1',y_1')$, $v_2'=(x_2', y_2')$.
Since $\dim(Av\cap Av_1')=2$ and $\dim(Av\cap Av_2')=2$, we have by Proposition 9.2 
that $Av\cap Av_1'=\spa{x_1',y_1'}v$ and
$Av\cap Av_2'=\spa{x_2',y_2'}v$.
Suppose $Av\cap Av_1'=Av\cap Av_2'$. Then
$\spa{x_1',y_1'}=\spa{x_2',y_2'}$.
Also $Av_1'\cap Av_2'\ne 0$.
If $Av_1'\ne Av_2'$, Proposition 6.8 would imply that $\spa{x_1',y_1'}\ne\spa{x_2',y_2'}$. So we must have $Av_1'=Av_2'$.

\end{proof}

\begin{prop}
Let $v=(x,y),v'=(x',y')\in A^2$ be nondegenerate. Suppose that $v,v'$ are independent.
Let $a,a'\in A$ with $a\ne 0$, $a'\ne 0$. Suppose $av=a'v'$.
If $a'\in \spa{x,y}$, then $\dim(Av\cap Av')=2$.
\end{prop}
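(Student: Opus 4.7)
The plan is to reduce the problem to the setting of Theorem 5.1, namely, to show that the hypotheses force the ``symmetric'' relation $x'y = xy'$. Once that relation holds, the calculation in the proof of Theorem 5.1 produces a two-dimensional subspace $\spa{x',y'}v = \spa{x,y}v'$ inside $Av\cap Av'$, while Theorem 4.1 together with the independence of $v,v'$ gives $Av\ne Av'$, forcing $\dim(Av\cap Av')=2$.

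\emph{The main step: locate $a$ in $\spa{x',y'}$.}  Write $a' = \lambda x + \mu y$ with $(\lambda,\mu)\ne (0,0)$. I would form the $F$-linear combination $\lambda(ax)+\mu(ay)$ of the two component equations of $av=a'v'$ and use right-distributivity to rewrite it two ways: as $a(\lambda x + \mu y) = aa'$ on one side, and as $a'(\lambda x' + \mu y')$ on the other.  Equating gives $aa' = a'(\lambda x' + \mu y')$.  Commutativity converts the left side into $a'a$, and left-cancellation by $a'$ (valid because $a'\ne 0$ and $A$ is a division algebra) yields $a = \lambda x' + \mu y'$.

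\emph{Finishing up.}  With both $a$ and $a'$ now pinned down, I would substitute back into $ax=a'x'$ and $ay=a'y'$.  Expanding by bilinearity and applying commutativity term by term ($x'x=xx'$, $y'x=xy'$, $yx'=x'y$, $y'y=yy'$), the $\lambda xx'$ terms cancel in the first equation and leave $\mu(xy'-x'y)=0$, while the $\mu yy'$ terms cancel in the second and leave $\lambda(x'y-xy')=0$.  Since $(\lambda,\mu)\ne 0$, this forces $x'y=xy'$.  Then, exactly as in the proof of Theorem 5.1, one has $x'v=xv'$ and $y'v=yv'$, so $\spa{x',y'}v = \spa{x,y}v' \subseteq Av\cap Av'$; this subspace is two-dimensional because $x',y'$ are independent ($v'$ being nondegenerate) and $b\mapsto bv$ is injective ($v$ being regular in the division algebra).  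Theorem 4.1 combined with the independence of $v,v'$ rules out $Av=Av'$, so $\dim(Av\cap Av')\le 2$, and equality follows.

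The delicate point is the main step: extracting $a\in\spa{x',y'}$ from the bilinear system $av=a'v'$.  It crucially uses both commutativity (to turn $aa'$ into $a'a$) and the division-algebra hypothesis (for left-cancellation by $a'$).  Everything afterwards is a direct calculation together with appeals to Theorems 4.1 and 5.1.
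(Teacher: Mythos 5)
Your proof is correct, and it reaches the paper's conclusion by a slightly different route in the key step. The paper's own proof first normalizes: it picks $P=\begin{pmatrix}\lambda & * \\ \mu & *\end{pmatrix}\in GL_2(F)$ with first column $\begin{pmatrix}\lambda \\ \mu\end{pmatrix}$, replaces $(v,v')$ by $(vP,v'P)$ — harmless by Proposition 6.1(ii) — so that $a'=x$ in the new coordinates; then $ax=a'x'=xx'=x'x$ gives $a=x'$ by cancellation, and $ay=xy'$ is already the relation $x'y=xy'$. You stay in the original coordinates instead: you prove $a=\lambda x'+\mu y'$ from $aa'=a'(\lambda x'+\mu y')$ via commutativity and left cancellation, and then substitute back to extract $x'y=xy'$. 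The two arguments are essentially conjugate — your identity $a=\lambda x'+\mu y'$ is precisely the statement $a=\tilde x'$ in the paper's normalized coordinates — but yours is self-contained (no appeal to the $GL_2$-equivariance of Proposition 6.1), at the cost of carrying $\lambda,\mu$ through a longer computation, whereas the paper's normalization reuses existing machinery to make the algebra one line. From the relation $x'y=xy'$ onward (the Theorem 5.1 construction of $\spa{x',y'}v=\spa{x,y}v'$, and Theorem 4.1 plus independence of $v,v'$ to rule out $Av=Av'$), the two proofs coincide.
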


\begin{proof}
Suppose $a'=\lambda x+\mu y$ with $\lambda,\mu\in F$.
Take  a matrix
$$
P=\begin{pmatrix} \lambda & * \\ \mu & *\end{pmatrix} \in GL_2(F).
$$
Then 
$(x,y)P=(a', *)$.
Put 
$\tilde v=vP$,
$\tilde v'=v'P$.
Then $\tilde v$ and $\tilde v'$ are nondegenerate;
$\tilde v, \tilde v'$ are independent;
$a\tilde v=a'\tilde v'$; 
$Av\cap Av'\cong A\tilde v\cap A\tilde v'$.

Replacing $v, v'$ by $\tilde v, \tilde v'$, we may assume $a'=x$ from the beginning.
Then the equation $av=a'v'$ reads
$ax=xx'$, $ay=xy'$,
hence
$a=x'$, $x'y=xy'$.
Then, as in the proof of Theorem 5.1 we have
$y'v=yv'$ and
$\spa{x',y'}v=\spa{x,y}v'\subset Av\cap Av'$.
Since $v, v'$ are independent, we have $Av\ne Av'$.
So
$Av\cap Av'=\spa{x',y'}v=\spa{x,y}v'$.
This is two-dimensional.
\end{proof}

\begin{prop}
Let $v=(x,y), v'=(x',y')\in A^2$. Suppose $\dim (Av\cap Av')=2$. Let $a,a'\in A$ 
with $a\ne 0$, $a'\ne 0$. Suppose $av=a'v'$.
Then $a'\in \spa{x,y}$.
\end{prop}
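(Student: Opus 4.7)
The plan is to reduce this essentially immediately to Proposition 9.2, which already identifies the intersection explicitly.

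First, I would observe that the hypothesis $\dim(Av\cap Av')=2$ together with Proposition 6.8 forces $x',y'$ to be linearly independent over $F$, so $v'$ is nondegenerate. In particular $x'\ne 0$, and since $A$ is a division algebra, the map $b\mapsto bv'=(bx',by')$ from $A$ to $A^2$ is injective: if $bv'=0$ then $bx'=0$ forces $b=0$.

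Next, I would invoke Proposition 9.2 to write
$$Av\cap Av'=\spa{x,y}v'.$$
The given element $av=a'v'$ lies in $Av\cap Av'$ (it is in $Av$ by definition, and equal to $a'v'\in Av'$), so there exists $b\in\spa{x,y}$ with $a'v'=bv'$. By the injectivity established above, $a'=b$, hence $a'\in\spa{x,y}$, which is exactly what we needed.

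I do not foresee any real obstacle here: once Proposition 9.2 is in hand, the statement is essentially a dual reading of that proposition, saying that the coefficient $a'$ appearing in the representation of a common element through $v'$ must come from the span $\spa{x,y}$. The only thing one must be careful about is checking nondegeneracy of $v'$ (to get injectivity of $b\mapsto bv'$), but this is handed to us by Proposition 6.8 via the hypothesis that $Av\cap Av'$ is nontrivial (in fact two-dimensional, hence $\ne Av$ since $\dim Av=3$, so Proposition 6.8 applies).
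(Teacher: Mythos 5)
Your proof is correct and follows essentially the same route as the paper's: invoke Proposition 9.2 to identify $Av\cap Av'=\spa{x,y}v'$, note that $a'v'$ lies in this intersection, and conclude $a'\in\spa{x,y}$. The only difference is that you spell out the injectivity of $b\mapsto bv'$ (via Proposition 6.8 and nondegeneracy of $v'$), which the paper leaves implicit in its final ``hence''; this is a harmless and slightly more careful presentation of the same argument.
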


\begin{proof}
By Proposition 9.2 we have
$Av\cap Av'=\spa{x,y}v'$.
Then $a'v'\in \spa{x,y}v'$, hence $a'\in \spa{x,y}$.

\end{proof}

\begin{prop}
Let $v=(x,y)\in A^2$ be nondegenerate and $v'\in A^2$. Let $a, a'\in A$ with $a\ne 0$, $a'\ne 0$.
Suppose $av=a'v'$.

(i) $\spa{a}=\spa{a'}\iff \spa{v}=\spa{v'}\iff Av=Av'$.

(ii) Suppose $\spa{a}\ne \spa{a'}$. 
If $a'\in \spa{x,y}$ then $\dim(Av\cap Av')=2$, and 
if $a'\notin \spa{x,y}$ then $Av\cap Av'=\spa{av}$.

\end{prop}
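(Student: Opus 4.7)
For part (i) I will verify the chain
\[
\spa{a}=\spa{a'}\;\Rightarrow\;\spa{v}=\spa{v'}\;\Rightarrow\;Av=Av'\;\Rightarrow\;\spa{v}=\spa{v'}\;\Rightarrow\;\spa{a}=\spa{a'}.
\]
If $a'=\lambda a$ with $\lambda\in F^{\times}$, substituting into $av=a'v'$ and using $F$-bilinearity gives $a(v-\lambda v')=0$; since $a\ne 0$ and $A$ is a division algebra, both coordinates of $v-\lambda v'$ vanish, so $v=\lambda v'$. The second implication is immediate. For the third I apply Theorem 4.1, which requires $v'$ to be nondegenerate; this holds because $a'x'=ax$ and $a'y'=ay$, and since $L_a$ is injective $ax,ay$ are $F$-independent, hence so are $x',y'$ (as $L_{a'}$ is injective). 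The last implication is symmetric to the first: if $v'=\mu v$, then $(a-\mu a')v=0$ and the nondegeneracy of $v$ (so $x\ne 0$) forces $a=\mu a'$.

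For part (ii) I first observe that $v,v'$ are $F$-independent: if $v'=\mu v$, the argument just given yields $\spa{a}=\spa{a'}$, contradicting the hypothesis. Now split on whether $a'\in\spa{x,y}$. If $a'\in\spa{x,y}$, then $v,v'$ are nondegenerate (by the same $L_a$-injectivity observation as in (i)) and independent, so Proposition 9.4 applies and yields $\dim(Av\cap Av')=2$ directly. If $a'\notin\spa{x,y}$, I argue by elimination: the element $av=a'v'$ is nonzero (since $a\ne 0$ and $v$ nondegenerate), so it is a nonzero element of $Av\cap Av'$, giving $\dim(Av\cap Av')\ge 1$. We cannot have $Av=Av'$ because part (i) together with $\spa{a}\ne\spa{a'}$ would be contradicted. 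And if $\dim(Av\cap Av')=2$, Proposition 9.5 applied to the witnessing pair $(a,a')$ forces $a'\in\spa{x,y}$, contradicting our case assumption. Hence the dimension is exactly $1$ and the intersection is the line $\spa{av}$.

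The argument is largely a bookkeeping consequence of earlier results: Theorem 4.1 controls when $Av=Av'$, Proposition 9.4 produces the two-dimensional intersection from the relation $av=a'v'$ with $a'\in\spa{x,y}$, and Proposition 9.5 provides the converse that forces $a'\in\spa{x,y}$ whenever the intersection is two-dimensional. The only points needing care are the routine verifications that (a) $v'$ inherits nondegeneracy from $v$ via $L_a$-injectivity, and (b) $v,v'$ are independent in part (ii); neither step is an obstacle. There is essentially no genuine difficulty, which is natural since this proposition is packaging the dichotomy established in Propositions 9.4--9.5 into a single clean statement about arbitrary witnesses of $av=a'v'$.
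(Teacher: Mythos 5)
Your proof is correct and follows essentially the same route as the paper: part (i) rests on Theorem 4.1, and part (ii) packages Propositions 9.4 and 9.5 into the dichotomy, exactly as the paper does. The only (harmless) deviation is that you establish nondegeneracy of $v'$ by a direct $L_a$-injectivity argument, where the paper cites Corollary 6.4 applied to the nonzero element $av=a'v'$ of $Av\cap Av'$.
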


\begin{proof}
Since $0\ne av=a'v'\in Av\cap Av'$, we have $Av\cap Av'\ne 0$, so 
$v'$ is nondegenerate by Corollary 6.4.

(i) The fist equivalence is clear, the second due to Theorem 4.1. 

(ii) Let $\spa{a}\ne \spa{a'}$. Then $\spa{v}\ne \spa{v'}$, $Av\ne Av'$.
Proposition 9.4 says if $a'\in \spa{x,y}$ then $\dim(Av\cap Av')=2$, 
while Proposition 9.5 says 
if $\dim (Av\cap Av')=2$ then $a'\in \spa{x,y}$.
So 
$a'\in \spa{x,y}$ if and only if $\dim(Av\cap Av')=2$.
Hence
$a'\notin \spa{x,y}$ if and only if $\dim(Av\cap Av')=1$, in which case $Av\cap Av'=\spa{av}$.
This proves (ii).
\end{proof}

Let $q$ denote the number of elements of the finite field $F$.
The following proposition is valid irrespective of the commutativity of $A$.

\begin{prop}
We have
\begin{align*}
&\#\{v\in A^2\mid \text{$v$ is nondegenerate}\}=(q^3-1)(q^3-q),\\
&\#\{v\in A^2\mid \text{$v$ is degenerate and nonzero}\}=(q^3-1)(q+1),
\end{align*}
and
\begin{align*}
&\#\{Av\mid \text{$v\in A^2$ is nondegenerate}\}=(q^3-1)(q+1)q,\\
&\#\{Av\mid \text{$v\in A^2$ is degenerate and nonzero}\}=q+1.
\end{align*}
\end{prop}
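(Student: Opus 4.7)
The plan is to carry out four direct counts; Theorem 4.1 (Theorem A) handles the only genuinely nontrivial reduction, for the third count, and Propositions 6.2--6.3 handle the fourth. No single step is a real obstacle.

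For the first count, I would pick $x\in A\setminus\{0\}$ in $q^3-1$ ways and then $y\in A\setminus Fx$ in $q^3-q$ ways, since $Fx$ is a one-dimensional $F$-subspace of size $q$; the product is $(q^3-1)(q^3-q)$. For the second count, the total number of nonzero elements of $A^2$ is $q^6-1$, so by subtraction the number of degenerate nonzero vectors is $(q^3-1)(q+1)$. Equivalently, one tallies the $q(q^3-1)$ vectors $(x,\lambda x)$ with $x\ne 0$ and $\lambda\in F$, plus the $q^3-1$ vectors $(0,y)$ with $y\ne 0$, arriving at the same total.

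For the third count, Theorem 4.1 says that for nondegenerate $v,v'\in A^2$ one has $Av=Av'$ if and only if $v'\in F^{\times}v$. Hence the map $v\mapsto Av$ on nondegenerate vectors has every fiber of size $q-1$, so the number of distinct subspaces $Av$ equals $(q^3-1)(q^3-q)/(q-1) = q(q+1)(q^3-1)$. For the fourth count, every degenerate nonzero vector has the form $(x,\lambda x)$ with $x\ne 0$, $\lambda\in F$, or the form $(0,y)$ with $y\ne 0$. In the former case $Av = \{(a,\lambda a)\mid a\in A\}$, and by Proposition 6.3 this depends only on $\lambda$ and distinct values of $\lambda$ give distinct subspaces, contributing $q$ subspaces; in the latter case $Av = 0\oplus A$, which is not among them. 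So there are exactly $q+1$ distinct subspaces, parametrized by a ``slope'' in $F\cup\{\infty\}$.
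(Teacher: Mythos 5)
Your proposal is correct and follows essentially the same route as the paper: direct counts for the first two equalities, then Theorem 4.1 to see that the map $v\mapsto Av$ on nondegenerate vectors has fibers of size $q-1$. The only cosmetic difference is the last count, where you enumerate the $q+1$ degenerate subspaces explicitly by slope in $F\cup\{\infty\}$ (via Proposition 6.3), whereas the paper divides the number $(q^3-1)(q+1)$ of degenerate nonzero vectors by the fiber size $q^3-1$ (via Corollary 6.4); the two arguments rest on the same fact.
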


\begin{proof}
The number of $(x,y)\in A^2$ such that $x,y$ are independent is 
 $(q^3-1)(q^3-q)$. 
The number of $(x,y)\ne 0$ such that $x,y$ are dependent is
$(q^6-1)-(q^3-1)(q^3-q)=(q^3-1)(q+1)$.
This proves the first two equalities.

Owing to Theorem 4.1 and Corollary 6.4  the last two follow by division by
$\#F-1=q-1$ and $\#A-1=q^3-1$. 

\end{proof}
 
\begin{prop}
Let $v=(x,y)\in A^2$ be nondegenerate. Then
$$
\#\{v'\in A^2\mid \dim(Av\cap Av')=2\}=q^3-q.
$$
\end{prop}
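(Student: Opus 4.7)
The plan is to reduce to Proposition 9.1 by showing that every $v' = (x',y')$ with $\dim(Av\cap Av') = 2$ must have $x, x'$ linearly independent over $F$, and then count such $x'$, using the uniqueness of $y'$ for each $x'$.

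First, I would rule out the two degenerate possibilities for $x'$. If $x' = 0$, then $v' = (0, y')$ is degenerate (or zero), so by Corollary 6.4 either $Av \cap Av' = 0$ or $Av = Av'$; the latter is impossible since $v = (x,y)$ has $x \ne 0$ while every element of $Av'$ has first coordinate zero. If instead $x' = \lambda x$ for some $\lambda \in F^\times$, Proposition 6.5 gives $Av \cap Av' \ne 0$ only when $y' = \lambda y$, i.e.\ $v' = \lambda v$, and then $Av = Av'$ has dimension $3$. In either case $\dim(Av \cap Av') \ne 2$, so any contributing $v'$ has $x, x'$ independent over $F$.

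Conversely, given $x' \in A$ with $x, x'$ independent, Proposition 9.1 provides exactly one $y' \in A$ such that $v' = (x', y')$ satisfies $\dim(Av \cap Av') = 2$. So the map $v' \mapsto x'$ gives a bijection from the set I want to count onto the set of $x' \in A$ with $x, x'$ independent.

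It remains to count: $|A| = q^3$ and $|Fx| = q$, so the number of $x' \in A$ linearly independent from $x$ is $q^3 - q$, yielding the claimed count. The only genuine content is the invocation of Proposition 9.1, which has already been established; the obstacle, such as it is, lies merely in carefully eliminating the cases where $x$ and $x'$ are dependent, using Proposition 6.5 and Corollary 6.4. No further computation is needed.
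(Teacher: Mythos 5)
Your proof is correct and follows essentially the same route as the paper's: both reduce the count to the bijection $v'\mapsto x'$ onto $A-\spa{x}$ furnished by the existence and uniqueness in Proposition 9.1, and then count $\#(A-\spa{x})=q^3-q$. The only difference is that you rule out the case of $x,x'$ dependent by hand (via Corollary 6.4 and Proposition 6.5), whereas the paper cites Proposition 6.8, whose proof is exactly that case analysis.
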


\begin{proof}
By Proposition 6.8 and Proposition 9.1 we have a bijection
$$
\{v'\in A^2\mid \dim(Av\cap Av')=2\} \to A-\spa{x}
$$
taking $v'=(x',y')$ to $x'$.
It follows that
$$
\#\{v'\in A^2\mid \dim(Av\cap Av')=2\}=\#(A-\spa{x})=q^3-q.
$$
\end{proof}

\begin{prop}
Let $v=(x,y)\in A^2$ be nondegenerate.
We have a bijection
$$
\{Av'\mid v'\in A^2, \dim(Av\cap Av')=2\}\to \{M\subset Av\mid \dim M=2, M\ne \spa{x,y}v\}
$$
taking $Av'$ to  $Av\cap Av'$.

\end{prop}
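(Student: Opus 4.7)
The plan is to verify that the stated map is well-defined with image in the claimed target, that it is injective, and that the two sides have the same cardinality; the bijection then follows from injectivity between equal finite sets. Injectivity is immediately Proposition 9.3.

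For well-definedness and containment of the image, let $v'$ satisfy $\dim(Av\cap Av')=2$. Proposition 9.2 gives $Av\cap Av' = \spa{x',y'}v$, a two-dimensional subspace of $Av$. Since $\dim Av = 3$ we have $Av\ne Av'$ and $Av\cap Av'\ne 0$, so Proposition 6.8 yields $\spa{x',y'}\ne \spa{x,y}$ in $A$. Because $v=(x,y)$ is nondegenerate and $A$ is a division algebra, the map $A\to Av$, $a\mapsto av$, is a linear isomorphism, so this inequality transfers to $\spa{x',y'}v\ne \spa{x,y}v$ inside $Av$.

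For the cardinality match, let $q=\#F$. By Proposition 9.9 there are $q^3-q$ vectors $v'\in A^2$ with $\dim(Av\cap Av')=2$, and by Proposition 6.8 each such $v'$ is nondegenerate; by Theorem 4.1 two nondegenerate vectors determine the same subspace $Av'$ precisely when they are $F^{\times}$-proportional, so the domain of the map has $(q^3-q)/(q-1)=q(q+1)$ elements. On the other side, the three-dimensional $F$-space $Av$ has $q^2+q+1$ two-dimensional subspaces, and removing $\spa{x,y}v$ leaves $q(q+1)$. The injective map between equal finite sets is therefore a bijection. The only step requiring any care is transferring $\spa{x',y'}\ne \spa{x,y}$ to $\spa{x',y'}v\ne \spa{x,y}v$, which uses regularity of $v$; the rest is a direct appeal to Propositions 9.2, 9.3, 9.9 and Theorem 4.1.
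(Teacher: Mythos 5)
Your proof is correct and takes essentially the same route as the paper: well-definedness via Propositions 9.2 and 6.8, injectivity via Proposition 9.3, and a count of $q^2+q$ elements on both sides to upgrade the injection to a bijection. The only slip is a citation label: the count $q^3-q$ of vectors $v'$ with $\dim(Av\cap Av')=2$ is Proposition 9.8 (the preceding proposition), not 9.9; otherwise your extra justifications (regularity of $v$ to transfer $\spa{x',y'}\ne\spa{x,y}$ into $Av$, and Theorem 4.1 to justify dividing by $q-1$) merely make explicit what the paper leaves implicit.
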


\begin{proof}
By Proposition 9.2 and Proposition 6.8 
if $v'=(x',y')\in A^2$ and
$\dim(Av\cap Av')=2$ then $Av\cap Av'=\spa{x',y'}v\ne \spa{x,y}v$.
Therefore we have a map
$$
\{Av'\mid v'\in A^2, \dim(Av\cap Av')=2\}\to \{M\subset Av\mid \dim M=2, M\ne \spa{x,y}v\}
$$
taking $Av'$ to  $Av\cap Av'$.
By Proposition 9.3 this map is injective.
Its target  has cardinality 
$(q^3-1)/(q-1)-1=q^2+q$,
and the domain has cardinality
$(q^3-q)/(q-1)=q^2+q$ by the preceding proposition.
Hence the map is bijective.

\end{proof}

\begin{prop}
Let $v=(x,y)\in A^2$ be nondegenerate.

(i) For any one-dimensional subspace $L\subset Av$ we have
$$
\#\{v'\in A^2\mid Av\cap Av'=L\}=\begin{cases}
q^3-q^2 \quad &\text{if $L\subset \spa{x,y}v$, } \\
(q^2-1)(q-1) &\text{if $L\not\subset \spa{x,y}v$.}
\end{cases}
$$
(ii)
$$
\#\{v'\in A^2\mid \dim(Av\cap Av')=1\}
=q^3(q^2-1).\\
$$
\end{prop}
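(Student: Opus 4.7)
The plan is to reduce counting $v'$ with $Av \cap Av' = L$ to counting elements $a' \in A - \{0\}$ with two exclusions. First I would fix a nonzero generator $u$ of $L$; since $v = (x,y)$ is nondegenerate, the map $a \mapsto av$ is injective, so there is a unique nonzero $a \in A$ with $u = av$. Then, since $A$ is a division algebra, for each $a' \in A - \{0\}$ the system $a'v' = u$ has the unique solution $v' = (L_{a'}^{-1}(u_1), L_{a'}^{-1}(u_2))$, where $u = (u_1, u_2)$. I would note that this establishes a bijection $A - \{0\} \to \{v' \in A^2 : u \in Av'\}$: every $v'$ with $u \in Av'$ arises from a unique $a' \ne 0$ by solving $a'v' = u$.

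Next I would apply Proposition 9.6 to the relation $av = a'v'$. That proposition gives three mutually exclusive cases: if $\spa{a} = \spa{a'}$, i.e.\ $a' \in F^{\times} a$, then $Av = Av'$ and so $L \subsetneq Av \cap Av'$; if $\spa{a} \ne \spa{a'}$ and $a' \in \spa{x,y}$, then $\dim(Av \cap Av') = 2$; and otherwise $Av \cap Av' = \spa{av} = L$. Hence the $v'$ we want correspond bijectively to those $a' \in A - \{0\}$ with $a' \notin F^{\times} a$ and $a' \notin \spa{x,y}$.

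For part (i), I would split on whether $L \subset \spa{x,y}v$, equivalently whether $a \in \spa{x,y}$. If $a \in \spa{x,y}$, then $F^{\times} a \subset \spa{x,y} - \{0\}$, so the excluded set is exactly $\spa{x,y} - \{0\}$ of size $q^2 - 1$, giving $(q^3 - 1) - (q^2 - 1) = q^3 - q^2$. If $a \notin \spa{x,y}$, then $F^{\times} a$ and $\spa{x,y} - \{0\}$ are disjoint, so $(q-1) + (q^2 - 1)$ elements are excluded, leaving $(q^3 - 1) - (q^2 + q - 2) = (q-1)(q^2 - 1)$.

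For part (ii) I would sum over all one-dimensional subspaces $L \subset Av$. Of the $(q^3 - 1)/(q - 1) = q^2 + q + 1$ such $L$, exactly $(q^2 - 1)/(q-1) = q + 1$ lie in $\spa{x,y}v$ and the remaining $q^2$ do not. The total count is therefore $(q+1)(q^3 - q^2) + q^2(q-1)(q^2-1)$, which factors as $q^2(q-1)\bigl[(q+1) + (q^2-1)\bigr] = q^2(q-1)\cdot q(q+1) = q^3(q^2 - 1)$. There is no substantive obstacle beyond bookkeeping on top of Proposition 9.6; the only point requiring attention is confirming the bijection in the first step, which uses only that $L_{a'}$ is a bijection for $a' \ne 0$ (so distinct $a'$ yield distinct $v'$ with $a'v' = u$).
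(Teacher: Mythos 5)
Your proof is correct and takes essentially the same approach as the paper: both reduce the count to the correspondence $v'\leftrightarrow a'$ given by $av=a'v'$ and Proposition 9.6, count the complement of $\spa{a}\cup\spa{x,y}$ in $A$ according to whether $a\in\spa{x,y}$ (equivalently $L\subset\spa{x,y}v$), and then sum over the $q+1$ lines inside $\spa{x,y}v$ and the $q^2$ lines outside. The only difference is that you make explicit the verification that the correspondence is a bijection, which the paper leaves implicit in its citation of Proposition 9.6.
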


\begin{proof}
(i) Let $L=\spa{av}$ with $a\in A$, $a\ne 0$.
By Proposition 9.6  we have a bijection
$$
\{v'\in A^2\mid Av\cap Av'=\spa{av}\}
\cong 
\{a'\in A\mid a'\notin  \spa{a}, \; a'\notin \spa{x,y}\},
$$
in which $v'$ and $a'$ are related  by the equation $av=a'v'$.
If $a\in \spa{x,y}$, the righthand set equals
$A-\spa{x,y}$,
which has cardinality $q^3-q^2$. 
If $a\notin \spa{x,y}$, the righthand set equals
$A-(\spa{x,y}\cup \spa{a})$,
which has cardinality
$q^3-(q^2+q-1)=(q-1)(q^2-1)$.
It follows that
$$
\#\{v'\in A^2\mid Av\cap Av'=\spa{av}\}=\begin{cases}
q^3-q^2 \quad &\text{if $a\in \spa{x,y}$, } \\
(q^2-1)(q-1) &\text{if $a\notin \spa{x,y}$.}
\end{cases}
$$
This proves (i).

(ii)
The number of one-dimensional subspaces $L\subset Av$ contained in $\spa{x,y}v$ is $q+1$, and that of $L$ not contained in $\spa{x,y}v$ is $q^2$. 
Using (i), we compute
\begin{align*}
\#\{v'\in A^2\mid \dim(Av\cap Av')=1\}
&=(q^3-q^2)(q+1)+(q^2-1)(q-1)q^2\\
&=q^3(q^2-1).
\end{align*}
\end{proof}

\begin{prop}
Let $v\in A^2$ be nondegenerate.
We have
\begin{align*}
&\#\{v'\in A^2\mid \text{$v'$ is nondegenerate and $Av\cap Av'=0$}\}\\
&=(q-1)(q^5-q^3-2q^2-2q-1).
\end{align*}
\end{prop}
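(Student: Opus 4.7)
The plan is to count by complement. We start from the total count of nondegenerate vectors in $A^2$, which is $(q^3-1)(q^3-q)$ by Proposition 9.7, and then subtract the number of nondegenerate $v'$ that yield an intersection $Av\cap Av'$ of dimension $3$, $2$, or $1$.

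First I would handle each of the three cases, noting that in each the relevant $v'$ are automatically nondegenerate once intersection is nontrivial of the correct dimension. For $\dim(Av\cap Av')=3$, i.e.\ $Av=Av'$, Theorem 4.1 says this is equivalent to $Fv=Fv'$, giving exactly $q-1$ choices $v'=kv$ with $k\in F^{\times}$. For $\dim(Av\cap Av')=2$, Proposition 9.8 gives exactly $q^3-q$ choices; any such $v'$ is nondegenerate because $Av\ne Av'$ forces the nondegeneracy conclusion of Proposition 6.8. For $\dim(Av\cap Av')=1$, Proposition 9.10 gives exactly $q^3(q^2-1)$ choices, and here nondegeneracy of $v'$ follows from Corollary 6.4 (a degenerate $v'$ with nontrivial intersection would satisfy $Av=Av'$, contradicting $\dim=1$).

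Subtracting, the desired count equals
\begin{align*}
&(q^3-1)(q^3-q)-(q-1)-(q^3-q)-q^3(q^2-1)\\
&\qquad=q^6-q^5-q^4-q^3+q+1,
\end{align*}
which factors as $(q-1)(q^5-q^3-2q^2-2q-1)$, matching the claimed expression. There is no real obstacle here: the substance of the statement is already absorbed into the four preceding propositions, and the final step is only a routine polynomial identity which I would verify by multiplying $(q-1)$ against $(q^5-q^3-2q^2-2q-1)$ and comparing coefficients.
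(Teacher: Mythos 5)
Your proposal is correct and follows essentially the same route as the paper: subtract from the total $(q^3-1)(q^3-q)$ of nondegenerate vectors the counts $q-1$, $q^3-q$, and $q^3(q^2-1)$ for intersection dimensions $3$, $2$, $1$ (Theorem 4.1, Propositions 9.8 and 9.10), using Corollary 6.4 to ensure all vectors with nontrivial intersection are nondegenerate. The arithmetic and the factorization $(q-1)(q^5-q^3-2q^2-2q-1)$ check out.
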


\begin{proof}
We know
\begin{align*}
&\#\{v'\in A^2\mid \text{$v'$ is nondegenerate}\}
=(q^3-1)(q^3-q),\\
&\#\{v'\in A^2\mid \dim(Av\cap Av')=3\}=q-1,\\
&\#\{v'\in A^2\mid \dim(Av\cap Av')=2\}=q^3-q,\\
&\#\{v'\in A^2\mid \dim(Av\cap Av')=1\}=q^3(q^2-1).
\end{align*}
Also 
$Av\cap Av'\ne 0$ only if $v'$ is nondegenerate by Corollary 6.4

Therefore we have
\begin{align*}
&\#\{v'\in A^2\mid \text{$v'$ is nondegenerate and $Av\cap Av'=0$}\}\\
&=(q^3-1)(q^3-q)-\{(q-1)+(q^3-q)+q^3(q^2-1)\}\\
&=(q-1)(q^5-q^3-2q^2-2q-1).
\end{align*}
\end{proof}

\begin{prop}
Let $0\ne v\in A^2$.
If $v$ is nondegenerate, 
$$
\#\{Av'\mid v'\ne 0, Av\cap Av'=0\}=
q^5-q^3-2q^2-q.
$$
If $v$ is degenerate,
$$
\#\{Av'\mid v'\ne 0, Av\cap Av'=0\}=
q^2(q^3+q^2-1).
$$
\end{prop}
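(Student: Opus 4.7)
The plan is to split on whether $v$ is nondegenerate or degenerate, and in each case further partition the set of candidate $v' \ne 0$ into nondegenerate and degenerate vectors. The main tool is Corollary 6.4, which says that as soon as one of $v,v'$ is degenerate, $Av\cap Av' \ne 0$ forces $Av = Av'$ (and the other vector to be degenerate too). This will make the degenerate side of the bookkeeping essentially trivial, and lets us import the counts for nondegenerate $v'$ directly from Propositions 9.9--9.11.

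For nondegenerate $v$, I would argue as follows. If $v'$ is degenerate and nonzero, then by Corollary 6.4 we have $Av\cap Av' = 0$ automatically (any nonzero intersection would force $v$ to be degenerate). So every degenerate class $Av'$ is admissible, and by Proposition 9.7 there are exactly $q+1$ of these. It is harmless here because a degenerate $Av'$ (which by Proposition 6.3 has the shape $\{(a,\lambda a)\mid a\in A\}$ or $A\oplus 0$) cannot coincide with any $Av'$ arising from a nondegenerate $v'$. For nondegenerate $v'$, Proposition 9.11 counts the vectors with $Av\cap Av' = 0$; by Theorem 4.1 each class $Av'$ of nondegenerate type is hit by exactly $q-1$ vectors, so I divide by $q-1$ to get $q^5 - q^3 - 2q^2 - 2q - 1$ classes. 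Adding the $q+1$ degenerate classes yields the stated $q^5 - q^3 - 2q^2 - q$.

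For degenerate $v$, Corollary 6.4 gives the very clean equivalence $Av\cap Av' = 0 \iff Av \ne Av'$, for \emph{every} nonzero $v'$ (regardless of whether $v'$ is degenerate). So the count reduces to counting all distinct $Av'$ with $v'\ne 0$ and subtracting $1$ for $Av$ itself. By Proposition 9.7 there are $(q^3-1)(q+1)q$ nondegenerate classes and $q+1$ degenerate classes, and these two families are disjoint (as above, a degenerate $Av'$ cannot equal a nondegenerate $Av'$). Summing and subtracting $1$ gives $(q+1)(q^4-q+1) - 1 = q^5 + q^4 - q^2 = q^2(q^3+q^2-1)$.

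There is no real obstacle in the argument; the only point that requires a moment's care is the disjointness of the two families of $Av'$ and the correct multiplicity ($q-1$ vs.\ $q^3-1$) when passing between vectors and the subspaces they span under $A$. Once the right counting lemma (Theorem 4.1 for nondegenerate vectors, and the complete structural description of $Av$ for degenerate $v$ via Propositions 6.2--6.3) is invoked in each range, both formulas drop out by straightforward arithmetic.
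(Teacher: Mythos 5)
Your proof is correct and follows essentially the same route as the paper: split on the degeneracy of $v'$, use Corollary 6.4 to dispose of the degenerate side, and combine the class counts from Proposition 9.7 with Proposition 9.11 (converted from vectors to classes by dividing by $q-1$ via Theorem 4.1). The only differences are cosmetic — you make explicit the multiplicity $q-1$ and the disjointness of the two families of classes, which the paper leaves implicit, and in the degenerate case you count all classes and subtract one rather than summing the two families separately.
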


\begin{proof}
Suppose first that $v$ is nondegenerate.
Then $Av\cap Av'=0$ for all degenerate $v'$ by Corollary 6.4.
We have by Proposition 9.7
$$
\#\{Av'\mid v'\ne 0 , \text{$v'$ is degenerate}\}=q+1.
$$
And by Proposition 9.11
$$
\#\{Av'\mid \text{$v'$ is nondegenerate and $Av\cap Av'=0$}\}=
q^5-q^3-2q^2-2q-1.
$$
Therefore
\begin{align*}
\#\{Av'\mid v'\ne 0, Av\cap Av'=0\}
&=(q+1)+(q^5-q^3-2q^2-2q-1)\\
&=q^5-q^3-2q^2-q.
\end{align*}

Suppose next that $v$ is degenerate.
Then $Av\cap Av'=0$ for all nondegenerate $v'$.
By Proposition 9.7
$$
\#\{Av'\mid \text{$v'$ is nondegenerate}\}=(q^3-1)(q+1)q.
$$
For a degenerate $v'$ we have $Av\cap Av'=0$ if $Av\ne Av'$.
By Proposition 9.7
$$
\#\{Av'\mid v'\ne 0 , \text{$v'$ is degenerate}, Av\ne Av'\}=q.
$$
Therefore
\begin{align*}
\#\{Av'\mid v'\ne 0, Av\cap Av'=0\}
&=(q^3-1)(q+1)q+q\\
&=q^2(q^3+q^2-1).
\end{align*}

\end{proof}

\subsection{the noncommutative case}
 
We next treat the case where $A$ is not isotopic to a commutative algebra.
By theorem 8.1 it never occurs that  $\dim(Av\cap Av')=2$ for $v,v'\in A^2$.

\begin{prop}
Let $v=(x,y)\in A^2$ be nondegenerate and $v'\in A^2$.
Let $a,a'\in A$ with $a\ne 0$, $a'\ne 0$. Suppose $av=a'v'$.

(i) $\spa{a}=\spa{a'}\iff \spa{v}=\spa{v'}\iff Av=Av'$.

(ii) When $\spa{a}\ne \spa{a'}$, we have
$Av\cap Av'=\spa{av}$.
\end{prop}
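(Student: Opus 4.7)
The plan is to settle (i) by a direct componentwise manipulation of the equation $av = a'v'$ together with Theorem 4.1, and then derive (ii) by pinning down $\dim(Av\cap Av')$ via dimension elimination, using Theorem 8.1 to rule out the value $2$ in the noncommutative setting.

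For (i), I would first establish $\spa{a}=\spa{a'}\iff \spa{v}=\spa{v'}$. Writing $v=(x,y)$, $v'=(x',y')$, the relation $av=a'v'$ is the pair $ax=a'x'$, $ay=a'y'$. If $a'=ka$ with $k\in F^{\times}$, bilinearity gives $a(x-kx')=0$ and $a(y-ky')=0$; since $a\ne 0$ and $A$ is a division algebra, $L_a$ is injective, so $v=kv'$. Conversely, if $v=kv'$, then $kav'=a'v'$ componentwise and the nonzeroness of $v'$ forces $a'=ka$. The remaining equivalence $\spa{v}=\spa{v'}\iff Av=Av'$ is exactly Theorem 4.1, applicable once I verify that $v'$ is nondegenerate. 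But the element $av = a'v'\in Av\cap Av'$ is nonzero, so if $v'$ were degenerate, Corollary 6.4 would give $Av=Av'$ and then force $v$ itself to be degenerate, contradicting the hypothesis.

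For (ii), suppose $\spa{a}\ne \spa{a'}$. Then by (i), $Av\ne Av'$, so $\dim(Av\cap Av')\le 2$; the element $av$ is a nonzero member of this intersection, so $\dim\ge 1$. Since $A$ is not isotopic to a commutative algebra, Theorem 8.1 excludes the value $2$. Hence $\dim(Av\cap Av')=1$, and since $\spa{av}$ is a one-dimensional subspace of this intersection, the two coincide.

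The main obstacle is really a matter of discipline rather than depth: the nonassociativity forbids formally dividing $av=a'v'$ by $a$ or $a'$, so every manipulation must be reduced to the componentwise equations $ax=a'x'$, $ay=a'y'$ and justified by the injectivity of left multiplications on $A$. With that care, the proposition follows in a few lines from Theorem 4.1, Corollary 6.4, and Theorem 8.1.
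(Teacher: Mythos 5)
Your proof is correct and takes essentially the same route as the paper's: the paper also establishes (i) by the componentwise cancellation you spell out (which it calls ``clear'') together with Corollary 6.4 and Theorem 4.1, and establishes (ii) by noting $Av\ne Av'$, $0\ne av=a'v'\in Av\cap Av'$, and the exclusion of two-dimensional intersections via Theorem 8.1 (which the paper invokes as a standing remark at the head of the noncommutative subsection rather than inside the proof). Your version merely makes these implicit details explicit.
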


\begin{proof}
(i) is proved as in the commutative case.

(ii)
Let $\spa{a}\ne \spa{a'}$. Then $\spa{v}\ne \spa{v'}$, $Av\ne Av'$ by (i).
And $0\ne av=a'v'\in Av\cap Av'$. So $\dim(Av\cap Av')=1$, hence
$Av\cap Av'=\spa{av}$.
\end{proof}

\begin{prop}
Let $v=(x,y)\in A^2$ be nondegenerate.

(i) For any one-dimensional subspace $L\subset Av$ we have
$$
\#\{v'\in A^2\mid Av\cap Av'=L\}
=q^3-q.
$$

(ii)
$$
\#\{v'\in A^2\mid \dim (Av\cap Av')=1\}
=q(q+1)(q^3-1).
$$
\end{prop}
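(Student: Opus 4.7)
The plan is to use Proposition 9.13 to convert the counting problem for $v'$ into a counting problem for pairs $(a,a')$ with $a'v'=av$, and then exploit the division algebra structure to turn this into a one-parameter count over $a'\in A$.

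For part (i), fix a nondegenerate $v=(x,y)$ and a one-dimensional subspace $L=\spa{av}\subset Av$ with $a\ne 0$. First I would unwind what the condition $Av\cap Av'=L$ means. By Proposition 9.13(ii), if $a'\ne 0$ satisfies $a'v'=av$ with $\spa{a}\ne \spa{a'}$, then $Av\cap Av'=\spa{av}=L$. Conversely, if $Av\cap Av'=L$, then pick the nonzero element $av\in Av\cap Av'$; writing it as $a'v'$ for some $a'\ne 0$ and noting $Av\ne Av'$, Proposition 9.13(i) forces $\spa{a}\ne \spa{a'}$. Thus
\[
\{v'\in A^2\mid Av\cap Av'=\spa{av}\}=\{v'\in A^2\mid \exists\, a'\in A\setminus Fa,\ a'v'=av\}.
\]

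Next I would set up the bijection with $\{a'\in A\mid a'\ne 0,\ a'\notin Fa\}$. For each nonzero $a'$, left multiplication by $a'$ is a bijection on $A$, so there is a unique $v'\in A^2$ with $a'v'=av$; since $av\ne 0$, necessarily $v'\ne 0$. Conversely, given such a $v'$, the element $a'$ is unique: if $a_1'v'=a_2'v'=av$, then $(a_1'-a_2')v'=0$, and since $v'\ne 0$ has a nonzero coordinate and $A$ is a division algebra, $a_1'=a_2'$. This yields the bijection. The count of admissible $a'$ is $(q^3-1)-(q-1)=q^3-q$, giving (i).

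For part (ii), the sets $\{v'\mid Av\cap Av'=L\}$ for distinct one-dimensional $L\subset Av$ partition $\{v'\mid \dim(Av\cap Av')=1\}$. The number of one-dimensional subspaces of the three-dimensional space $Av$ is $(q^3-1)/(q-1)=q^2+q+1$. Crucially, in the noncommutative case, Theorem 8.1 rules out $\dim=2$, so the count in (i) is the same for every $L$, unlike the dichotomy in Proposition 9.11 for the commutative case. Therefore
\[
\#\{v'\in A^2\mid \dim(Av\cap Av')=1\}=(q^2+q+1)(q^3-q)=q(q+1)(q^3-1),
\]
using $(q^2+q+1)(q-1)=q^3-1$.

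The one delicate point I anticipate is verifying that the map $a'\mapsto v'$ really is a bijection onto the correct target; both the existence/uniqueness of $v'$ given $a'$, and the uniqueness of $a'$ given $v'$, must be handled via the division algebra axiom. Beyond this, the argument is essentially bookkeeping and the arithmetic identity $(q^2+q+1)(q^3-q)=q(q+1)(q^3-1)$.
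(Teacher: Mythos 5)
Your proof is correct and follows essentially the same route as the paper: part (i) is the bijection $v'\leftrightarrow a'$ (via $a'v'=av$) with $A\setminus Fa$ furnished by Proposition 9.13, and part (ii) sums the constant count $q^3-q$ over the $q^2+q+1$ lines $L\subset Av$. You merely spell out the existence/uniqueness details of the bijection that the paper leaves implicit, and correctly identify Theorem 8.1 (through Proposition 9.13) as the reason no case split on $L$ is needed, unlike the commutative case.
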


\begin{proof}
(i)
Let $L=\spa{av}$ with $a\in A$. By the preceding proposition we have a bijection
$$
\{v'\in A^2\mid Av\cap Av'=\spa{av}\}
\cong \{a'\in A\mid a'\notin \spa{a}\},
$$
in which $v'$ corresponds to $a'$ if $av=a'v'$.
This set has cardinality $q^3-q$.

(ii)
The number of one-dimensional subspaces $L\subset Av$ is $q^2+q+1$.
It follows from (i) that
\begin{align*}
&\#\{v'\in A^2\mid \dim (Av\cap Av')=1\}\\
&=(q^3-q)(q^2+q+1)=q(q+1)(q^3-1).
\end{align*}

\end{proof}

\begin{prop}
Let $v\in A^2$ be nondegenerate. We have
\begin{align*}
&\#\{v'\in A^2\mid \text{$v'$ is nondegenerate and $Av\cap Av'=0$}\}\\
&=(q-1)(q^5 - 2 q^3 - 3 q^2 - 2 q - 1).
\end{align*}
\end{prop}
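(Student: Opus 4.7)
The plan is to follow the same subtraction strategy used in Proposition 9.11, incorporating the key simplification that in the noncommutative setting two-dimensional intersections are forbidden by Theorem 8.1. Concretely, I would partition the set of nondegenerate $v'\in A^2$ according to $\dim(Av\cap Av')$, which under our hypotheses can only take the values $0$, $1$, or $3$ (the value $2$ being excluded because $A$ is assumed not isotopic to a commutative algebra).

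First I would recall from Proposition 9.7 that the total number of nondegenerate $v'\in A^2$ is $(q^3-1)(q^3-q)$. For the case $\dim(Av\cap Av')=3$, i.e.\ $Av=Av'$, note that $Av\cap Av'\ne 0$ together with $v$ nondegenerate forces $v'$ nondegenerate by Corollary 6.4; Theorem 4.1 then yields $Fv=Fv'$, giving exactly $q-1$ such $v'$. For the case $\dim(Av\cap Av')=1$, Proposition 9.14(ii) supplies the count $q(q+1)(q^3-1)$, and once again Corollary 6.4 ensures automatically that every such $v'$ is nondegenerate.

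Subtracting these two counts from the total gives
\[
(q^3-1)(q^3-q)-(q-1)-q(q+1)(q^3-1).
\]
Factoring $q^3-1=(q-1)(q^2+q+1)$ out of the first and third terms leaves
\[
(q-1)\bigl[(q^2+q+1)(q^3-q^2-2q)-1\bigr],
\]
and expanding $(q^2+q+1)(q^3-q^2-2q)=q^5-2q^3-3q^2-2q$ produces the claimed value $(q-1)(q^5-2q^3-3q^2-2q-1)$.

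There is no conceptual obstacle; the argument is purely bookkeeping. The one place that deserves a careful word is verifying that the tallies for the $\dim=1$ and $\dim=3$ cases automatically lie inside the nondegenerate regime (so that we are really subtracting subsets of the nondegenerate $v'$), which is exactly what Corollary 6.4 provides.
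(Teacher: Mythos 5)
Your proposal is correct and follows essentially the same route as the paper: subtract the counts for $\dim(Av\cap Av')=3$ (which is $q-1$ by Theorem 4.1) and $\dim(Av\cap Av')=1$ (which is $q(q+1)(q^3-1)$ by Proposition 9.14(ii)) from the total count $(q^3-1)(q^3-q)$ of nondegenerate $v'$, with Theorem 8.1 ruling out dimension $2$ and Corollary 6.4 guaranteeing that all $v'$ with nonzero intersection are nondegenerate. The algebraic simplification also checks out, so nothing is missing.
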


\begin{proof}
We know
\begin{align*}
&\#\{v'\in A^2\mid \text{$v'$ is nondegenerate}\}
=(q^3-1)(q^3-q),\\
&\#\{v'\in A^2\mid \dim(Av\cap Av')=3\}=q-1,\\
&\#\{v'\in A^2\mid \dim(Av\cap Av')=2\}=0,\\
&\#\{v'\in A^2\mid \dim(Av\cap Av')=1\}=q(q+1)(q^3-1).
\end{align*}
And
$Av\cap Av'\ne 0$ only if $v'$ is nondegenerate.
It follows that
\begin{align*}
&\#\{v'\in A^2\mid \text{$v'$ is nondegenerate and $Av\cap Av'=0$}\}\\
&=(q^3-1)(q^3-q)-\{(q-1)+q(q+1)(q^3-1)\}\\
&=(q-1)(q^5 - 2 q^3 - 3 q^2 - 2 q - 1).
\end{align*}

\end{proof}

\begin{prop}
Let $0\ne v\in A^2$.  
If $v$ is nondegenerate, 
$$
\#\{Av'\mid v'\ne 0, Av\cap Av'=0\}=q^5-2q^3-3q^2-q.
$$
If $v$ is degenerate,
$$
\#\{Av'\mid v'\ne 0, Av\cap Av'=0\}=
q^2(q^3+q^2-1).
$$
\end{prop}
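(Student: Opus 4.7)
The plan is to follow exactly the template of Proposition 9.12 from the commutative subsection, replacing Proposition 9.11 by its noncommutative counterpart Proposition 9.15, and observing that Proposition 9.7 and Corollary 6.4 are insensitive to commutativity. The key input is that in the noncommutative case two-dimensional intersections never occur (Theorem 8.1), which has already been used in the count of Proposition 9.15.

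First I would treat the nondegenerate case. Starting from Proposition 9.15, dividing by $q-1$ (the size of $F^{\times}$) via the $F^{\times}$-action on the set of $v'$ giving a fixed $Av'$, I would obtain
\[
\#\{Av'\mid v'\text{ is nondegenerate and } Av\cap Av'=0\}=q^5-2q^3-3q^2-2q-1.
\]
Then Corollary 6.4 guarantees that for a nondegenerate $v$ and any nonzero degenerate $v'$ we have $Av\ne Av'$, hence $Av\cap Av'=0$; and by Proposition 9.7 there are $q+1$ such degenerate $Av'$. Summing gives $q^5-2q^3-3q^2-q$, matching the stated formula.

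For the degenerate case I would again invoke Corollary 6.4: when $v$ is degenerate, $Av\cap Av'\ne 0$ forces $Av=Av'$ and $v'$ degenerate. Consequently $Av\cap Av'=0$ for every nondegenerate $v'$, and for every nonzero degenerate $v'$ with $Av'\ne Av$. Using the two counts from Proposition 9.7, the number of nondegenerate $Av'$ is $(q^3-1)(q+1)q$ and the number of degenerate $Av'$ other than $Av$ is $q$. The total is
\[
(q^3-1)(q+1)q+q = q\bigl[(q^3-1)(q+1)+1\bigr]= q\bigl[q^4+q^3-q\bigr] = q^2(q^3+q^2-1),
\]
as claimed. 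Note this expression coincides with the one in Proposition 9.12(ii) for degenerate $v$, which is expected since the argument for a degenerate $v$ uses only Corollary 6.4 and Proposition 9.7, neither of which depends on whether $A$ is isotopic to a commutative algebra.

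Since all the ingredients have been assembled in earlier propositions, there is no substantive obstacle; the proof is essentially a bookkeeping computation. The only point requiring a brief sentence of justification is that the factor $q-1$ arising from the $F^{\times}$-scaling $v'\mapsto \lambda v'$ indeed converts the count of vectors in Proposition 9.15 to the count of subspaces $Av'$, which is legitimate because $v'\ne 0$ and hence $Av'$ is well-defined and unchanged by nonzero $F$-scaling of $v'$.
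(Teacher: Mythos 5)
Your proof is correct and takes essentially the same route as the paper: in the nondegenerate case, add the $q+1$ degenerate subspaces (Corollary 6.4 and Proposition 9.7) to the subspace count extracted from Proposition 9.15, and in the degenerate case repeat the computation of Proposition 9.12 verbatim. The one point to tighten is your justification of the division by $q-1$: invariance of $Av'$ under $v'\mapsto \lambda v'$ shows only that each fiber of $v'\mapsto Av'$ is a union of $F^{\times}$-orbits (hence gives an inequality), and exactness of the division requires that distinct lines of nondegenerate vectors give distinct subspaces, i.e.\ Theorem 4.1, which is precisely how the paper justifies this same division in Proposition 9.7.
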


\begin{proof}
Suppose  that $v$ is nondegenerate.
Then $Av\cap Av'=0$ for all degenerate $v'$.
We have by Proposition 9.7
$$
\#\{Av'\mid v'\ne 0 , \text{$v'$ is degenerate}\}=q+1.
$$
And by Proposition 9.15
$$
\#\{Av'\mid \text{$v'$ is nondegenerate and $Av\cap Av'=0$}\}=
q^5 - 2 q^3 - 3 q^2 - 2 q - 1.
$$
Therefore
\begin{align*}
\#\{Av'\mid v'\ne 0, Av\cap Av'=0\}
&=(q+1)+(q^5-2q^3-3q^2-2q-1)\\
&=q^5-2q^3-3q^2-q.
\end{align*}

The proof for the degenerate case is the same as that of Proposition 9.12. 

\end{proof}

\end{document}